\preto{\@verbatim}{\topsep=0pt \partopsep=0pt }
\pgfplotsset{compat=1.11}
\theoremstyle{plain}
\newtheorem{theorem}{Theorem}[section]
\newtheorem*{theorem*}{Theorem}
\newtheorem{lemma}[theorem]{Lemma}
\newtheorem{proposition}[theorem]{Proposition}
\newtheorem{corollary}[theorem]{Corollary}
\newtheorem{definition}[theorem]{Definition}
\newtheorem{rem}[theorem]{Remark}
\newtheorem*{thma}{Theorem A}
\newtheorem*{thmb}{Theorem B}
\theoremstyle{remark}
\newtheorem*{claim}{Claim}
	\newcommand{\churro}[1][]{		
		\pgfkeys{/churro, default, #1} 
		\begin{scope}[scale=\Churroscale,xshift=\Churrox*28.45/\Churroscale pt, yshift=\Churroy*28.45/\Churroscale pt] 
			\draw [fill=\Churrocolor] 
			(0,0) to[out=0,in=-90] (0.3,0.3)
			to[out=90,in=-120] (0.2,1.25)
			to[out=60,in=120] (0.8,1.25)
			to[out=-60,in=90] (0.7,0.3)
			to[out=-90,in=180] (1,0);
			
			\draw 
			(0.3,1.1) to[out=-80,in=-100] (0.7,1.1);
			
			\draw 
			(0.35,1.06) to[out=70,in=110] (0.65,1.06);
		\end{scope}
	}
\newcommand{\id}{\operatorname{id}}
\newcommand{\im}{\operatorname{im}}
\newcommand{\rk}{\operatorname{rk}}
\newcommand{\Aut}{\operatorname{Aut}}
\newcommand{\colim}{\operatorname{colim}}
\newcommand{\coker}{\operatorname{coker}}
\newcommand{\Diff}{\operatorname{Diff}}
\newcommand{\sSet}{\operatorname{\mathsf{sSet}}}
\newcommand{\Alg}{\operatorname{Alg}}
\newcommand{\proj}{\operatorname{proj}}
\newcommand{\Arf}{\operatorname{Arf}}
\newcommand{\sMod}{\operatorname{\mathsf{sMod}}}
\newcommand{\R}{\mathbf{R}}
\newcommand{\Rq}{\mathbf{R^q}}
\newcommand{\gr}{\operatorname{gr}}
\newcommand{\Sp}{\mathsf{Sp}}
\newcommand{\F}{\operatorname{\mathbb{F}_{\ell}}}
\renewcommand{\mod}{\operatorname{mod}}
\newcommand{\FF}{\operatorname{\mathbb{F}_2}}
\newcommand{\G}{\mathsf{G}}
\newcommand{\kk}{\mathds{k}}
\newcommand{\X}{\mathbf{X}}
\newcommand{\Ak}{\mathbf{A_{\kk}}}
\newcommand{\Gq}{\mathsf{G^q}}
\newcommand{\MCG}{\mathsf{MCG}}
\newcommand{\Stab}{\operatorname{Stab}}
\newcommand{\AFF}{\operatorname{\mathbf{A_{\FF}}}}
\newcommand*{\Scale}[2][4]{\scalebox{#1}{$#2$}}%
\title[spin mapping class groups and quadratic symplectic groups]{Homological stability of spin mapping class groups and quadratic symplectic groups}
\subjclass[2010]{55R40, 57S05, 20J06}
\keywords{Homological stability, spin mapping class groups, symplectic groups, $E_k$-algebras}
\author{Ismael Sierra}
\begin{document}
\large
\maketitle

\begin{abstract}
We study the homological stability of spin mapping class groups of surfaces and of quadratic symplectic groups using cellular $E_2$-algebras. 
We get improvements in their stability results, which for the spin mapping class groups we show to be optimal away from the prime $2$. 
We also prove that in both cases the $\FF$-homology satisfies secondary homological stability.
Finally, we give full descriptions of the first homology groups of the spin mapping class groups and of the quadratic symplectic groups. 
\end{abstract}

\section{Introduction}
\subsection{Definition of the groups} \label{section intro}

We denote by $\Sigma_{g,1}$ the orientable surface of genus $g$ with one boundary component, and by $\Gamma_{g,1}=\pi_0(\Diff_{\partial}(\Sigma_{g,1}))$ its \textit{mapping class group}, defined to be the group of isotopy classes of diffeomorphisms of $\Sigma_{g,1}$ fixing pointwise a neighbourhood of its boundary. 
We will define the spin mapping class groups using the approach of \cite{harerspin}, which is based on the notion of quadratic refinements. 

Given an integer-valued skew-symmetric bilinear form $(M,\lambda)$ on a finitely generated free $\mathbb{Z}$-module, a \textit{quadratic refinement} is a function $q: M \rightarrow \mathbb{Z}/2$ such that $q(x+y) \equiv q(x)+q(y)+ \lambda(x,y) (\mod 2)$ for all $x,y \in M$. 
There are $2^{\rk(M)}$ quadratic refinements since a quadratic refinement is uniquely determined by its values on a basis of $M$, and any set of values is possible. 

The set of quadratic refinements of $(H_1(\Sigma_{g,1};\mathbb{Z}),\cdot)$ has a right $\Gamma_{g,1}$-action by precomposition. 
By \cite[Corollary 2]{Johnsonspin} this action has precisely two orbits for $g \geq 1$, distinguished by the \textit{Arf invariant}, which is a $\mathbb{Z}/2$-valued function on the set of quadratic refinements, see Definition \ref{defn arf}. 
For $\epsilon \in \{0,1\}$ we will denote by $\Gamma_{g,1}^{1/2}[\epsilon]:= \Stab_{\Gamma_{g,1}}(q)$ where $q$ is a choice of quadratic refinement of Arf invariant $\epsilon$, and call this the \textit{spin mapping class group in genus $g$ and Arf invariant $\epsilon$}. 

Similarly, for $g \geq 1$ the group $Sp_{2g}(\mathbb{Z})$ acts on the set of quadratic refinements of the standard symplectic form $(\mathbb{Z}^{2g},\Omega_g)$ with precisely two orbits, also distinguished by the Arf invariant. 
Thus, for $\epsilon \in \{0,1\}$ we can define the \textit{quadratic symplectic group in genus $g$ and Arf invariant $\epsilon$} to be $Sp_{2g}^{\epsilon}(\mathbb{Z}):= \Stab_{Sp_{2g}(\mathbb{Z})}(q)$ for a fixed quadratic refinement $q$ of Arf invariant $\epsilon$. 

\subsection{Statement of results} \label{section results}

Before stating the results let us recall what \textit{stabilization maps} mean in this context.
We begin by fixing quadratic refinements $q_0,q_1$ of $(H_1(\Sigma_{1,1};\mathbb{Z}),\cdot) \cong (\mathbb{Z}^2,\Omega_1)$ of Arf invariants $0,1$ respectively. 
Then, given any quadratic refinement $q$ of $(H_1(\Sigma_{g-1,1};\mathbb{Z}),\cdot) \cong (\mathbb{Z}^{2(g-1)},\Omega_{g-1})$ we get a quadratic refinement $q \oplus q_{\epsilon}$ of $(H_1(\Sigma_{g,1};\mathbb{Z}),\cdot) \cong (\mathbb{Z}^{2g},\Omega_{g})$. 
Moreover, the Arf invariant is additive, see Definition \ref{defn arf}, so $\Arf(q\oplus q_{\epsilon})=\Arf(q)+\epsilon$. 

Thus, using the inclusions $\Gamma_{g-1,1} \subset \Gamma_{g,1}$ and $Sp_{2(g-1)}(\mathbb{Z}) \subset Sp_{2g}(\mathbb{Z})$ we get \textit{stabilization maps} 
$$\Gamma_{g-1,1}^{1/2}[\delta-\epsilon] \rightarrow \Gamma_{g,1}^{1/2}[\delta]$$
and 
$$Sp_{2(g-1)}^{\delta-\epsilon}(\mathbb{Z}) \rightarrow Sp_{2g}^{\delta}(\mathbb{Z}).$$
The goal of this paper is to study homological stability with respect to these stabilisation maps. 
Before moving to the results observe that additivity of the Arf invariant under direct sum of quadratic refinements also allows us to define products 
$$\Gamma_{g,1}^{1/2}[\epsilon] \times \Gamma_{g',1}^{1/2}[\epsilon'] \rightarrow \Gamma_{g+g',1}^{1/2}[\epsilon+\epsilon']$$
and
$$Sp_{2g}^{\epsilon}(\mathbb{Z}) \times Sp_{2g'}^{\epsilon}(\mathbb{Z})\rightarrow Sp_{2(g+g')}^{\epsilon+\epsilon'}(\mathbb{Z})$$
which contain the stabilisation maps as particular cases. 

It is known since \cite[Theorem 3.1]{harerspin} that spin mapping class groups satisfy homological stability in the range $d \lesssim g/4$, and their stable homology can be understood by \cite[Section 1]{stablespin}. 
Thus, improvements in the stability range are important as they lead to new homology computations. 
In this direction, the previously known best bounds can be found in \cite[Theorem 2.14]{rspin}, where a range of the form $d \lesssim 2g/5$ was shown. 
The first main result of this paper improves the known stability range. 

\begin{thma} \label{theorem A}
Consider the stabilization map 
$$H_d(\Gamma_{g-1,1}^{1/2}[\delta-\epsilon];\mathds{k}) \rightarrow H_d(\Gamma_{g,1}^{1/2}[\delta];\mathds{k}),$$
then:
\begin{enumerate}[(i)]
    \item If $\mathds{k}=\mathbb{Z}$ it is surjective for $2d \leq g-2$ and an isomorphism for $2d \leq g-4$. 
    \item If $\mathds{k}=\mathbb{Z}[\frac{1}{2}]$ it is surjective for $3d \leq 2g-4$ and an isomorphism for $3d \leq 2g-7$.  
\end{enumerate}
Moreover, there is a homology class $\theta \in H_2(\Gamma_{4,1}^{1/2}[0];\FF)$ such that 
$$\theta \cdot-: H_{d-2}(\Gamma_{g-4,1}^{1/2}[\delta],\Gamma_{g-5,1}^{1/2}[\delta-\epsilon];\FF) \rightarrow H_d(\Gamma_{g,1}^{1/2}[\delta],\Gamma_{g-1,1}^{1/2}[\delta-\epsilon];\FF)$$ 
is surjective for $3d \leq 2g-5$ and an isomorphism for $3d \leq 2g-8$. 
\end{thma}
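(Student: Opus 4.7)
The strategy is to work in the cellular $E_2$-algebra framework of Galatius--Kupers--Randal-Williams. I would first assemble the classifying spaces $B\Gamma_{g,1}^{1/2}[\epsilon]$ into a single $E_2$-algebra $\mathbf{R}$ in pointed simplicial modules over the discrete groupoid $\mathsf{G}$ indexing genus together with Arf invariant. The $E_2$-multiplication is provided by pair-of-pants gluing compatible with quadratic refinements and recovers the external products described in Section~\ref{section results}; the stabilization maps then arise from multiplication by $\sigma_\epsilon \in H_0(\mathbf{R}(1,\epsilon))$. By the standard reduction in this framework, parts (i) and (ii) follow from proving vanishing of the relative $E_2$-homology $H^{E_2}_{g,d}(\bR;\mathds{k})$ below the asserted lines.

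To access these $E_2$-homology groups I would analyze a spin-decorated splitting complex: the semisimplicial space $\St(g,\delta)$ of tuples of disjoint properly embedded arcs that cut off genus-one subsurfaces carrying prescribed Arf invariants compatible with the ambient quadratic refinement. Adapting the combinatorial connectivity arguments of Harer and of Randal-Williams--Wahl, one shows $\St(g,\delta)$ is roughly $\lfloor (g-c)/2 \rfloor$-connected; the Arf decoration is handled by a bad-simplices argument, noting that deleting a handle shifts the Arf invariant by a controllable amount and there remain sufficiently many allowable splittings of each Arf class. Feeding this connectivity estimate into the skeletal spectral sequence for $\mathbf{R}$ yields vanishing of $E_2$-cells below the slope-$1/2$ line, giving part~(i).

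For part~(ii), after inverting $2$, I would push the vanishing line from slope $1/2$ up to slope $2/3$ by further cell attachment in the $E_2$-CW structure. The remaining low-dimensional cells lie in bidegrees $(g,d)$ with $2d=g$, and via the Hurewicz edge of the bar spectral sequence they are controlled by $H_1(\Gamma_{g,1}^{1/2}[\epsilon];\mathbb{Z})$, whose explicit description is established elsewhere in this paper. After attaching $E_2$-cells that kill these classes over $\mathbb{Z}[\tfrac{1}{2}]$, all remaining cells lie on or above the slope-$2/3$ line, giving~(ii).

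For the secondary stability statement I would construct $\theta \in H_2(\Gamma_{4,1}^{1/2}[0];\FF)$ as the image of a specific $E_2$-cell of bidegree $(4,2)$, in analogy with the class used for the ordinary mapping class group in bidegree $(3,2)$; genus~$4$ rather than~$3$ is forced by the Arf invariant constraint on the decomposition. With $\theta$ incorporated as a cell, the general machinery identifies the map $\theta\cdot-$ on relative homology with the boundary of this cell in the resolution, which is an isomorphism exactly in the claimed range by the vanishing arguments of the previous paragraph applied to $\FF$-coefficients. The main obstacle I anticipate is the optimal connectivity estimate for the spin-decorated splitting complex — the Arf-invariant constraint restricts the available splittings and requires a refined bad-simplices argument — together with a clean identification of $\theta$ as the unique obstruction on the slope-$2/3$ line in $\FF$-homology.
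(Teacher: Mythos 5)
Your overall architecture matches the paper's: encode the spin mapping class groups into a graded $E_2$-algebra, deduce a vanishing line on $E_2$-homology, and feed that together with low-degree $H_1$-computations into generic stability theorems (the paper's Theorems~\ref{theorem stab 1}, \ref{theorem stab 2}, \ref{thm stab 3}). Parts (ii) and the secondary-stability picture — attaching extra $E_2$-cells to kill degree-one classes over $\mathbb{Z}[\tfrac12]$, and producing $\theta$ as a bidegree-$(4,2)$ class with the Arf constraint forcing genus $4$ rather than $3$ — are also essentially the route the paper takes, with $\theta$ constructed as a lift of $\rho^2$ in Definition~\ref{def theta}.

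Where you diverge, and where your plan is both harder and not quite calibrated to the framework, is in establishing the $E_2$-homology vanishing line. You propose to build a spin-decorated splitting complex (arcs cutting off genus-one pieces with prescribed Arf invariants) and to prove its connectivity by adapting Harer/Randal-Williams--Wahl bad-simplices arguments, anticipating that the Arf constraint is the main technical obstacle. The paper sidesteps this entirely: Theorem~\ref{theorem splitting complexes} shows that for any quadratic data $(\G,\rk,Q)$ the $E_1$-splitting complex of the quadratic groupoid $\Gq$ is \emph{isomorphic} (as a semisimplicial set) to that of the underlying groupoid $\G$, because any splitting of the underlying object pulls back the quadratic refinement uniquely along each factor. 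So the standard connectivity estimate for $\MCG$, already known from \cite[Theorem 3.4]{E2}, transports for free to $\mathsf{MCG^q}$ (Corollary~\ref{cor std connectivity}), with no new bad-simplices or Arf-bookkeeping argument needed. The obstacle you flag dissolves. Moreover, your stated target connectivity ``roughly $\lfloor (g-c)/2\rfloor$'' is the slope-$1/2$ bound appropriate to the classical RWW arc-complex setup; the $E_2$-cell vanishing $H^{E_2}_{x,d}=0$ for $d<\rk(x)-1$ requires the \emph{top-degree concentration} (Cohen--Macaulay / sphericity) of the $E_1$-splitting complex, which is a slope-$1$ statement. As written, your connectivity estimate would be too weak to feed into the cellular $E_2$-algebra machinery to get even part~(i); the slope-$1/2$ stability range emerges from the generic theorem applied to the slope-$1$ $E_2$-cell vanishing, not from a slope-$1/2$ complex connectivity.
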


The result with $\mathbb{Z}[1/2]$-coefficients is optimal (up to possibly a better constant term) by Lemma \ref{lem optimallity}, and in particular the ``slope $2/3$'' cannot be improved. 
The last part of the theorem is an example of \textit{secondary homological stability}, which means that it gives a range in which the defects of homological stability are themselves stable. 
By Corollary \ref{cor 2 torsion}, a consequence is that the $\FF$-homology satisfies a $2/3$ slope stability if and only if $\theta^3$ destabilizes by $\sigma_{\epsilon}$; and otherwise the slope $1/2$ of part (i) would be optimal with $\FF$-coefficients, and hence integrally. 
We do not know which of these two alternatives holds. 
Finally we remark that the class $\theta$ is not uniquely defined, see Remarks \ref{rem indeterminacy} and \ref{rem theta well-defined}, but its indeterminacy is small and the statement above holds for any such choice of $\theta$. 

The second main result is about homological stability of quadratic symplectic groups. 

\begin{thmb} \label{theorem B}
Consider the stabilization map 
$$H_d(Sp_{2(g-1)}^{\delta-\epsilon}(\mathbb{Z});\mathds{k}) \rightarrow H_d(Sp_{2g}^{\delta}(\mathbb{Z});\mathds{k}),$$
then:
\begin{enumerate}[(i)]
    \item If $\mathds{k}=\mathbb{Z}$ it is surjective for $2d \leq g-2$ and an isomorphism for $2d \leq g-4$. 
    \item If $\mathds{k}=\mathbb{Z}[\frac{1}{2}]$ it is surjective for $3d \leq 2g-4$ and an isomorphism for $3d \leq 2g-7$.  
\end{enumerate} 
Moreover, there is a homology class $\theta \in H_2(Sp_8^0(\mathbb{Z});\FF)$ such that 
$$\theta \cdot-: H_{d-2}(Sp_{2(g-4)}^{\delta}(\mathbb{Z}),Sp_{2(g-5)}^{\delta-\epsilon}(\mathbb{Z});\FF) \rightarrow H_d(Sp_{2g}^{\delta}(\mathbb{Z}),Sp_{2(g-1)}^{\delta-\epsilon}(\mathbb{Z});\FF)$$ 
is surjective for $3d \leq 2g-5$ and an isomorphism for $3d \leq 2g-8$. 
\end{thmb}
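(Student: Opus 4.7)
The proof will parallel that of Theorem A, with quadratic symplectic groups replacing spin mapping class groups, via the Galatius--Kupers--Randal-Williams machinery of cellular $E_2$-algebras. The plan is to first assemble the groupoid $\bigsqcup_{g \geq 0,\, \epsilon \in \{0,1\}} BSp_{2g}^{\epsilon}(\mathbb{Z})$ into an $E_2$-algebra $\R$ bigraded by genus and Arf invariant, where the $E_2$-structure is inherited from the orthogonal direct sum of quadratic refinements, with additivity of $\Arf$ matching the grading. The stabilization maps arise as right multiplication by generators $\sigma_\epsilon \in H_0(\R)$ of bidegree $(1, \epsilon)$, and each of the three parts of the theorem translates, in the GKRW dictionary, into a vanishing statement for $E_2$-indecomposables of $\R$ in a specified region of bidegrees.

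The crucial geometric input is a high-connectivity estimate for a semisimplicial complex of quadratic splittings: for fixed $(\mathbb{Z}^{2g}, \Omega_g, q)$ the $p$-simplices are ordered $(p+1)$-tuples of mutually orthogonal, unimodular, rank-$2$ sublattices $H_0, \dots, H_p$, each carrying the restriction of $q$. Using the Hatcher--Wahl bad-simplex / link technique I expect this complex to be roughly $\lfloor (g-2)/2 \rfloor$-connected, mirroring Charney's bound in the symplectic case. This connectivity is what drives the slope-$1/2$ stability of part (i) via the CW-approximation spectral sequence for the underlying $E_1$-algebra.

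For part (ii), I would adapt the approach used by Galatius--Kupers--Randal-Williams for $Sp_{2g}(\mathbb{Z})$ and for $\Gamma_{g,1}$: compute the low-genus $E_2$-homology of $\R$ with $\mathbb{Z}[1/2]$-coefficients, using the skeletal spectral sequence together with the first-homology computations established elsewhere in the paper. The integral cells obstructing slope-$2/3$ stability, concentrated in low genus and low homological degree, should turn out to be $2$-torsion, so that after inverting $2$ the obstruction disappears and the improved slope follows.

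For part (iii), I would identify $\theta$ with the surviving $E_2$-homology obstruction at the prime $2$, living in genus $4$, homological degree $2$, and Arf invariant $0$. Attaching a cell of $\R$ along $\theta$ should upgrade the $E_2$-homology vanishing range to slope $2/3$ (modulo cells of strictly higher slope), and the standard GKRW translation then yields precisely that $\theta \cdot -$ is surjective for $3d \leq 2g - 3$ and an isomorphism for $3d \leq 2g - 6$ on relative $\FF$-homology. The class $\theta$ itself can be constructed by a Dyer--Lashof-type operation applied to a degree-$1$ class in genus $2$, whose existence is guaranteed by the paper's first-homology computation. The principal obstacle will be the quadratic refinement of the splitting complex: ensuring enough mutually orthogonal hyperbolic planes of prescribed Arf invariant exist in each genus requires input beyond the symplectic case, and the low-genus cellular computation demands careful bookkeeping of the Arf grading and its interaction with the $E_2$-operadic structure.
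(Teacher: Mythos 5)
Your high-level outline is right: build an $E_2$-algebra graded by genus and Arf invariant, realize the stabilization as multiplication by rank-one classes $\sigma_\epsilon$, translate stability into vanishing lines for $E_2$-homology, and feed in low-genus first-homology computations (from the Appendix) to unlock the improved $2/3$ slope and the secondary-stability statement. That matches the structure of the paper's proof, which applies Theorems \ref{theorem stab 1}, \ref{theorem stab 2} and \ref{thm stab 3} to $\mathbf{X} = \mathbf{R^{\mathsf q}}_{\kk}$.

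However, your proposed geometric input has a genuine gap. You suggest proving connectivity of a ``quadratic splitting complex'' of mutually orthogonal unimodular rank-$2$ sublattices via a Hatcher--Wahl bad-simplex argument, with expected connectivity about $\lfloor (g-2)/2 \rfloor$. The paper does not need any new connectivity argument. Its key structural observation is Theorem \ref{theorem splitting complexes}: because $Q$ is a strong (braided) monoidal functor, a quadratic refinement of a sum restricts uniquely and compatibly to the summands, so the $E_1$-splitting complex $S_\bullet^{E_1,\mathsf{Sp^q}}(q)$ is \emph{isomorphic as a semisimplicial set} to $S_\bullet^{E_1,\mathsf{Sp}}(g)$. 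The connectivity of the quadratic case is therefore inherited for free from the non-quadratic one. That non-quadratic complex (Proposition \ref{prop std connect}) is not the rank-$2$ complex you describe; it is the nerve of the poset of \emph{all} proper nontrivial symplectic direct summands, which is Cohen--Macaulay and $(g-3)$-connected by the cited result on spherical buildings. The standard connectivity estimate ($\tilde{H}_*$ concentrated in degree $g-2$) is what produces the vanishing line $H^{E_2}_{x,d}=0$ for $d < \rk(x)-1$ via Corollary \ref{cor std connectivity}; the weaker $\lfloor (g-2)/2 \rfloor$-connectivity of a complex of rank-$2$ summands would not supply this, and a direct Hatcher--Wahl argument for a quadratic version would be both harder than necessary and give the wrong bound.

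A smaller inaccuracy: you describe $\theta$ as a Dyer--Lashof operation applied to a degree-one homology class in genus $2$ whose existence comes from the first-homology computations. In the paper $\theta$ lifts the class $\rho^2 = Q^1(\rho)$ on the skeletal filtration spectral sequence of the model CW $E_2$-algebra, where $\rho$ is the $1$-cell attached to kill the relation $\sigma_1^2 - \sigma_0^2$; $\rho$ is not a homology class of $Sp_4^0(\mathbb{Z})$. The first-homology computations (Theorems \ref{thm: 6.7}, \ref{thm: 6.8}) are used to verify hypotheses (i)--(iv) of Theorem \ref{thm stab 3} (and (i)--(iii) of Theorem \ref{theorem stab 2}), not to construct $\theta$.
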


The groups $Sp_{2g}^{0}(\mathbb{Z})$ have appeared in the literature under the name of \textit{theta subgroups of the symplectic groups}, and sometimes denoted by $Sp_{2g}^{q}(\mathbb{Z})$. 
These groups are of importance in number theory, see \cite{presentationsymplectic} for example, and in the study of manifolds, as in \cite[Section 4]{framings}. 
The groups $Sp_{2g}^{1}(\mathbb{Z})$ are less common but have appeared recently in the study of manifolds in \cite[Section 4]{framings}, where they are denoted by $Sp_{2g}^{a}(\mathbb{Z})$. 

Some results were previously known about homological stability of quadratic symplectic groups. 
In particular, \cite[Theorem 5.2]{Nina} already gave a stability result of the form $d \lesssim g/2$ following different techniques. 
However, the improvement to $d \lesssim 2g/3$ in part (ii) of the above theorem is new. 
As before, the last part is a secondary stability result which implies that either the $\FF$-homology also has slope $2/3$ stability (if $\theta^3$ destabilises) or the optimal slope of the $\FF$-homology is $1/2$ (otherwise).
The class $\theta$ is again not well-defined but its indeterminacy is understood by Remarks \ref{rem indeterminacy} and \ref{rem theta well-defined}. 

We will prove Theorems \hyperref[theorem A]{A} and \hyperref[theorem B]{B} using the technique of \textit{cellular $E_k$-algebras} developed in \cite{Ek}, and in particular we follow some ideas of \cite{E2} where this approach is applied to homological stability of mapping class groups of surfaces. 

The basic idea is to define $E_2$-algebra structures on both $\bigsqcup_{g,\epsilon} B \Gamma_{g,1}^{1/2}[\epsilon]$ and $\bigsqcup_{g,\epsilon} B \Sp_{2g}^{\epsilon}(\mathbb{Z})$ which are ``induced by the products'' 
$$\Gamma_{g,1}^{1/2}[\epsilon] \times \Gamma_{g',1}^{1/2}[\epsilon'] \rightarrow \Gamma_{g+g',1}^{1/2}[\epsilon+\epsilon']$$
and 
$$Sp_{2g}^{\epsilon}(\mathbb{Z}) \times Sp_{2g'}^{\epsilon}(\mathbb{Z})\rightarrow Sp_{2(g+g')}^{\epsilon+\epsilon'}(\mathbb{Z})$$
respectively. 
This structure contains the stabilisation maps but also captures more information, which will be used to prove the above stability ranges and to properly define the class $\theta$ and the secondary stabilisation. 

\subsection{Overview of cellular $E_2$-algebras} \label{section E2 algebras overview}
The purpose of this section is to explain the methods from \cite{Ek} used in this paper: we aim for an informal discussion and refer to \cite{Ek} for details.

In the $E_2$-algebras part of the paper we will work in the category $\mathsf{sMod}_{\mathds{k}}^{\mathsf{G}}$ of $\mathsf{G}$-graded simplicial $\mathds{k}$-modules, for $\mathds{k}$ a commutative ring and $\mathsf{G}$ a discrete symmetric monoid. 
Formally, $\mathsf{sMod}_{\mathds{k}}^{\mathsf{G}}$ denotes the category of functors from $\mathsf{G}$, viewed as a category with objects the elements of $\mathsf{G}$ and only identity morphisms, to $\mathsf{sMod}_{\mathds{k}}$. 
This means that each object $M$ consists of a simplicial $\mathds{k}$-module $M_{\bullet}(x)$ for each $x \in \mathsf{G}$. 
The tensor product $\otimes$ in this category is given by Day convolution, i.e.
$$ (M \otimes N)_p(x)= \bigoplus_{y+z=x}{M_p(y) \otimes_{\mathds{k}} N_p(z)}$$
where $+$ denotes the monoidal structure of $\mathsf{G}$. 

In a similar way one can define the category of $\mathsf{G}$-graded spaces, denoted by $\mathsf{Top}^{\mathsf{G}}$ and endow it with a monoidal structure by Day convolution using cartesian product of spaces. 

The \textit{little 2-cubes} operad in $\mathsf{Top}$ has $n$-ary operations given by rectilinear embeddings $I^2 \times \{1,\cdots,n\} \hookrightarrow I^2$ such that the interiors of the images of the $2$-cubes are disjoint.
(The space of 0-ary operations is empty.) 
We define the little $2$-cubes operad in $\mathsf{sMod}_{\mathds{k}}$ by applying the symmetric monoidal functor
$(-)_{\mathds{k}}: \mathsf{Top} \rightarrow \mathsf{sMod}_{\mathds{k}}$ given by the free $\mathds{k}$-module on the singular simplicial set of a space.
Moreover, $(-)_{\mathds{k}}$ can be promoted to a functor $(-)_{\mathds{k}}: \mathsf{Top}^{\mathsf{G}} \rightarrow \mathsf{sMod}_{\mathds{k}}^{\mathsf{G}}$ between the graded categories, and we define the little $2$-cubes operad in these by concentrating it in grading $0$, where $0 \in \mathsf{G}$ denotes the identity of the monoid.  
We shall denote this operad by $\mathcal{C}_2$ in all the categories  $\mathsf{Top}, \mathsf{Top}^{\mathsf{G}}, \mathsf{sMod}_{\mathds{k}}^{\mathsf{G}}$ which we use, and define an $E_2$-\textit{algebra} to mean an algebra over this operad. 

The $E_2$-\textit{indecomposables} of an $E_2$-algebra $\textbf{R}$ in $\mathsf{sMod}_{\mathds{k}}^{\mathsf{G}}$ is defined by the exact sequence of graded simplicial $\mathds{k}$-modules

$$ \bigoplus_{n \geq 2}{\mathcal{C}_2(n) \otimes \textbf{R}^{\otimes n}} \rightarrow \textbf{R} \rightarrow Q^{E_2}(\textbf{R}) \rightarrow 0.$$

The functor $\textbf{R} \mapsto Q^{E_2}(\textbf{R})$ is not homotopy-invariant but has a derived functor $Q_{\mathbb{L}}^{E_2}(-)$ which is.
See \cite[Section 13]{Ek} for details and how to define it in more general categories such as $E_2$-algebras in $\mathsf{Top}$ or $\mathsf{Top}^{\mathsf{G}}$. 
The $E_2$-\textit{homology} groups of $\textbf{R}$ are defined to be
$$H_{x,d}^{E_2}(\textbf{R}):=H_d(Q_{\mathbb{L}}^{E_2}(\textbf{R})(x))$$
for $x \in \mathsf{G}$ and $d \in \mathbb{N}$. 
One formal property of the derived indecomposables, see \cite[Lemma 18.2]{Ek}, is that it commutes with $(-)_{\mathds{k}}$, so for $\textbf{R} \in \Alg_{E_2}(\mathsf{Top}^{\mathsf{G}})$ its $E_2$-homology with $\mathds{k}$ coefficients is the same as the $E_2$-homology of $\textbf{R}_{\mathds{k}}$. 

Thus, in order to study homological stability of $\textbf{R}$ with different coefficients we can work with the $E_2$-algebras $\textbf{R}_{\mathds{k}}$ instead, which enjoy better properties as they are cofibrant and the category of graded simplicial $\mathds{k}$-modules offers some technical advantages as explained in \cite[Section 11]{Ek}.
However, at the same time, we can do computations in $\mathsf{Top}$ of the homology or $E_2$-homology of $\textbf{R}$ and then transfer them to $\mathsf{sMod}_{\mathds{k}}$. 

In \cite[Section 6]{Ek} the notion of a \textit{CW $E_2$-algebra} is defined, built in terms of free $E_2$-algebras by iteratively attaching cells in the category of $E_2$-algebras in order of dimension. 

Let $\Delta^{x,d} \in \mathsf{sSet}^{\mathsf{G}}$ be the standard $d$-simplex placed in grading $x$ and let $\partial \Delta^{x,d} \in \mathsf{sSet}^{\mathsf{G}}$ be its boundary. 
By applying the free $\mathds{k}$-module functor we get objects $\Delta_{\mathds{k}}^{x,d}, \partial \Delta_{\mathds{k}}^{x,d} \in \mathsf{sMod}_{\mathds{k}}^{\mathsf{G}}$. 
We then define the graded spheres in $\mathsf{sMod}_{\mathds{k}}^{\mathsf{G}}$ via $S_{\mathds{k}}^{x,d}:=\Delta_{\mathds{k}}^{x,d}/ \partial \Delta_{\mathds{k}}^{x,d}$, where the quotient denotes the cofibre of the inclusion of the boundary into the disc. 
In $\mathsf{sMod}_{\mathds{k}}^{\mathsf{G}}$, the data for a cell attachment to an $E_2$-algebra $\textbf{R}$ is an \textit{attaching map} $e: \partial \Delta_{\mathds{k}}^{x,d} \rightarrow \textbf{R}$, which is the same as a map $\partial \Delta^d_{\mathds{k}} \rightarrow \textbf{R}(x)$ of simplicial $\mathds{k}$-modules. 
To attach the cell we form the pushout in $\Alg_{E_2}(\mathsf{sMod}_{\mathds{k}}^{\mathsf{G}})$

\centerline{\xymatrix{ \mathbf{E_2}(\partial \Delta_{\mathds{k}}^{x,d}) \ar[r] \ar[d] & \textbf{R} \ar[d] \\
\mathbf{E_2}(\Delta_{\mathds{k}}^{x,d}) \ar[r] & \textbf{R} \cup_{e}^{E_2} D_{\mathds{k}}^{x,d}.
}}

A weak equivalence $\textbf{C} \xrightarrow{\sim} \textbf{R}$ from a CW $E_2$-algebra is called a \textit{CW-approximation} to
$\textbf{R}$, and a key result, \cite[Theorem 11.21]{Ek}, is that if $\textbf{R}(0) \simeq 0$ then $\textbf{R}$ admits a CW-approximation. 
Moreover, whenever $\mathds{k}$ is a field, we can construct a CW-approximation in which the number of $(x,d)$-cells needed is precisely the dimension of $H_{x,d}^{E_2}(\textbf{R})$.
By “giving the $d$-cells filtration $d$”, see \cite[Section 11]{Ek} for a more precise discussion of what this
means, one gets a skeletal filtration of this $E_2$-algebra and a spectral sequence computing the homology of $\textbf{R}$. 

In order to discuss homological stability of $E_2$-algebras we will need some preparation. 
For the rest of this section let $\textbf{R} \in \Alg_{E_2}(\mathsf{sMod}_{\mathds{k}}^{\mathsf{G}})$, where $\mathsf{G}$ is equipped with a symmetric monoidal functor $\rk: \mathsf{G} \rightarrow \mathbb{N}$; and suppose we are given a homology class $\sigma \in H_{x,0}(\textbf{R})$ with $\rk(x)=1$.
By definition $\sigma$ is a homotopy class of maps $\sigma: S_{\mathds{k}}^{x,0} \rightarrow \mathbf{R}$.

Following \cite[Section 12.2]{Ek}, there is a strictly associative algebra $\mathbf{\overline{R}}$ which is equivalent to the unitalization $\mathbf{R^+}:=\mathds{1} \oplus \textbf{R}$, where $\mathds{1}$ is the monoidal unit in simplicial modules. 
Then, $\sigma$ gives a map $\sigma \cdot-: S_{\mathds{k}}^{x,0} \otimes \mathbf{\overline{R}} \rightarrow \mathbf{\overline{R}}$ by using the associative product of $\mathbf{\overline{R}}$. 
We then define $\mathbf{\overline{R}}/\sigma$ to be the cofibre of this map.
Observe that a-priori $\sigma \cdot -$ is not a (left) $\mathbf{\overline{R}}$-module map, so the cofibre $\mathbf{\overline{R}}/\sigma$ is not a (left) $\mathbf{\overline{R}}$-module.
However, by the ``adapters construction'' in \cite[Section 12.2]{Ek} and its  applications in \cite[Section 12.2.3]{Ek}, there is a way of defining a cofibration sequence $S^{1,0} \otimes \mathbf{\overline{R}} \xrightarrow{\sigma \cdot - } \mathbf{\overline{R}} \rightarrow \mathbf{\overline{R}}/\sigma$ in the category of left $\mathbf{\overline{R}}$-modules in such a way that forgetting the $\mathbf{\overline{R}}$-module structure recovers the previous construction; we will make use of this fact in some of the proofs of Section \ref{section proof}. 

By construction $\sigma \cdot -$ induces maps $\textbf{R}(y) \rightarrow \textbf{R}(x+y)$ between the different graded components of $\textbf{R}$ and the homology of the object $\mathbf{\overline{R}}/\sigma$ captures the relative homology of these. 
Thus, homological stability results of $\textbf{R}$ using $\sigma$ to stabilize can be reformulated as vanishing ranges for $H_{x,d}(\mathbf{\overline{R}}/\sigma)$; the advantage of doing so is that using filtrations for CW approximations of $\textbf{R}$ one also gets filtrations of $\mathbf{\overline{R}}/\sigma$ and hence spectral sequences capable of detecting vanishing ranges. 

The secondary stability result can be written in terms of $E_2$-algebras in a similar way: 
this time we will have a class $\sigma$ as above and another class $\theta$, and we will prove a vanishing in the homology of the iterated cofibre construction $\R/(\sigma,\theta):=(\R/\sigma)/\theta$, in the sense of \cite[Section 12.2.3]{Ek}.

\subsection{Organization of the paper}

In Section \ref{section Ek} we will state generic stability results for $E_2$-algebras, which will be shown in Section \ref{section proof} and then used to prove Theorems \hyperref[theorem A]{A} and \hyperref[theorem B]{B}. 

In Section \ref{section 4} we will define the notion of ``quadratic data'' and explain how it produces a ``quadratic $E_2$-algebra''. 
This construction generalizes the way that spin mapping class groups and quadratic symplectic groups are defined from the mapping class groups and symplectic groups respectively. 
Finally, Theorem \ref{theorem splitting complexes} and Corollary \ref{cor std connectivity} will give ways to access information about the $E_2$-cells of the quadratic $E_2$-algebra from knowledge about the underlying non-quadratic algebra. 

Section \ref{section symplectic} is devoted to the proof of Theorem \hyperref[theorem B]{B}, which is an application of the results of the previous sections. 
In the proof of the last two parts of the theorem we will also need some information about the first homology groups of quadratic symplectic groups, which can be found in the \hyperref[appendix]{Appendix}. 

Section \ref{section mcg} contains the proof of Theorem \hyperref[theorem A]{A}, which follows similar steps to the previous section. 

Finally, the \hyperref[appendix]{Appendix} contains detailed computations of the first homology groups of spin mapping class groups and quadratic symplectic groups. 
Let us remark that a full description of all first homology groups and stabilization maps is included for completeness, even if not everything there is used to prove Theorems \hyperref[theorem A]{A} and \hyperref[theorem B]{B}. 
The main idea behind the computations is to start with known presentations of the mapping class groups and symplectic groups and then to find presentations for the (finite index subgroups) spin mapping class groups and quadratic symplectic groups using GAP.

\section*{Acknowledgements.}
I am supported by an EPSRC PhD Studentship, grant no. 2261123, and by O. Randal-Williams’ Philip Leverhulme Prize from the Leverhulme Trust.
I would like to give special thanks to my PhD supervisor Oscar Randal-Williams for all his advice and all the helpful discussions and corrections. 

\section{Generic homological stability results} \label{section Ek}

In this section we will state three homological stability results for $E_2$-algebras, Theorems \ref{theorem stab 1}, \ref{theorem stab 2} and \ref{thm stab 3}, that will later apply to quadratic symplectic groups and spin mapping class groups in Sections \ref{section symplectic} and \ref{section mcg}. 
The first two of these are inspired by the generic homological stability theorem \cite[Theorem 18.1]{Ek}, in the sense that they input a vanishing line on the $E_2$-homology of an $E_2$-algebra along with some information about the homology in small bidegrees, and they output homological stability results for the algebra. 
The third one is a secondary stability result, which is inspired by \cite[Lemma 5.6, Theorem 5.12]{E2}. 

In addition, we have Corollary \ref{cor 2 torsion}, which says that $E_2$-algebras satisfying the assumptions of Theorem \ref{thm stab 3} have homological stability of slope either exactly $1/2$ or at least $2/3$ depending on the value of a certain homology class. 
However, the precise statement of this corollary is delayed to the next section until we have properly defined the secondary stabilisation map.

Before stating the results let us define the grading category that will be relevant:
let $\mathsf{H}$ be the discrete monoid $\{0\} \cup \mathbb{N}_{>0} \times \mathbb{Z}/2$, where the monoidal structure $+$ is given by addition in both coordinates. 
We denote by $\rk: \mathsf{H} \rightarrow \mathbb{N}$ the monoidal functor given by projection to the first coordinate. 

Also, let us recall that on $\Alg_{E_2}(\mathsf{sMod}_{\mathds{k}}^{\mathbb{N}})$ there is a homology operation $Q_{\mathds{k}}^1(-): H_{*,0}(-) \rightarrow H_{2*,1}(-)$ defined in \cite[Page 199]{Ek}.
This operation satisfies that $-2 Q_{\mathds{k}}^1(-)=[-,-]$, where $[-,-]$ is the Browder bracket. 
By using the canonical rank functor $\rk: \mathsf{H} \rightarrow \mathbb{N}$ we can view any $\mathsf{H}$-graded $E_2$-algebra as $\mathbb{N}$-graded and hence make sense of this operation on $\Alg_{E_2}(\mathsf{sMod}_{\mathds{k}}^{\mathsf{H}})$ too. 

\begin{theorem} \label{theorem stab 1}
Let $\kk$ be a commutative ring and let $\X \in \Alg_{E_2}(\sMod_{\kk}^{\mathsf{H}})$ be such that $H_{0,0}(\X)=0$, $H_{x,d}^{E_2}(\X)=0$ for $d<\rk(x)-1$, and $H_{*,0}(\mathbf{\overline{X}})=\frac{\kk[\sigma_0,\sigma_1]}{(\sigma_1^2-\sigma_0^2)}$ as a ring, for some classes $\sigma_{\epsilon} \in H_{(1,\epsilon),0}(\X)$. 
Then, for any $\epsilon \in \{0,1\}$ and any $x \in \mathsf{H}$ we have $H_{x,d}(\mathbf{\overline{X}}/\sigma_{\epsilon})=0$ for $2d \leq \rk(x)-2$. 
\end{theorem}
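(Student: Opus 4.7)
I would follow the strategy of \cite[Theorem 18.1]{Ek}. The first step is to take a CW-approximation $\mathbf{C} \xrightarrow{\sim} \X$ in $\Alg_{E_2}(\sMod_{\kk}^{\mathsf{H}})$ whose cells are concentrated in bidegrees $(x,d)$ satisfying $d \geq \rk(x) - 1$, which is possible by the hypothesis $H^{E_2}_{x,d}(\X) = 0$ for $d < \rk(x) - 1$ together with \cite[Theorem 11.21]{Ek}. The hypothesis on $H_{*,0}(\mathbf{\overline{X}})$ forces these cells, in low rank, to include two $0$-cells at bidegrees $((1,0),0)$ and $((1,1),0)$ representing $\sigma_0, \sigma_1$, and a $1$-cell at bidegree $((2,0),1)$ encoding the relation $\sigma_0^2 = \sigma_1^2$.

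Next, I would use the skeletal filtration of $\mathbf{C}$ to induce a filtration on $\mathbf{\overline{C}}/\sigma_\epsilon$ via the adapters construction of \cite[Section 12.2]{Ek}, making it a filtered left $\mathbf{\overline{C}}$-module. The associated spectral sequence converges to $H_{x,d}(\mathbf{\overline{X}}/\sigma_\epsilon)$. On the $E^1$-page, individual cells contribute at bidegrees on the slope-$1$ line $d = \rk(x)-1$, while their Pontryagin products and Browder brackets shift contributions in a controlled way: a product of two cells of ranks $r_1, r_2$ and degrees $d_i = r_i - 1$ lands at $(r_1 + r_2, r_1 + r_2 - 2)$, and a Browder bracket lands at $(r_1 + r_2, r_1 + r_2 - 1)$. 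Quotienting by $\sigma_\epsilon$ eliminates precisely the bracket-free classes containing a $\sigma_\epsilon$-factor; combined with the ring relation $\sigma_0^2 = \sigma_1^2$, which forces $H_{*,0}(\mathbf{\overline{X}}/\sigma_\epsilon) = \kk[\sigma_{1-\epsilon}]/(\sigma_{1-\epsilon}^2)$, this prevents deep iteration of Pontryagin multiplication in low homological degree and confines surviving classes to lie on or above the slope-$1/2$ line.

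An induction on $\rk(x)$, using the cofiber sequence $S_{\kk}^{(1,\epsilon),0} \otimes \mathbf{\overline{C}} \xrightarrow{\sigma_\epsilon \cdot -} \mathbf{\overline{C}} \to \mathbf{\overline{C}}/\sigma_\epsilon$ of left $\mathbf{\overline{C}}$-modules to relate $H_{*,*}(\mathbf{\overline{C}}/\sigma_\epsilon)$ to $\sigma_\epsilon$-multiplication on $H_{*,*}(\mathbf{\overline{C}})$, then closes the argument and yields the desired vanishing $H_{x,d}(\mathbf{\overline{X}}/\sigma_\epsilon) = 0$ for $2d \leq \rk(x) - 2$. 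The main obstacle is the book-keeping for two simultaneous stabilization classes: one must verify that the interaction between $\sigma_0, \sigma_1$ via Browder brackets like $[\sigma_0, \sigma_1] \in H_{(2,1),1}(\X)$ and Dyer--Lashof operations like $Q^1_{\kk}(\sigma_\epsilon) \in H_{(2,0),1}(\X)$, both of which sit exactly on the slope-$1/2$ line, does not produce unexpected homology classes beneath that line after passing to the cofiber by $\sigma_\epsilon$; this is where the slope-$1$ $E_2$-vanishing hypothesis and the specific ring relation play a crucial joint role.
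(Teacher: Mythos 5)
Your sketch gets the broad template right — CW approximation, skeletal filtration, adapters, spectral sequence, and slope bookkeeping — and you correctly identify the dangerous bidegrees ($[\sigma_0,\sigma_1]$, $Q^1_{\kk}(\sigma_\epsilon)$) sitting on the slope-$1/2$ line. But there are two genuine gaps that a direct spectral-sequence analysis of a CW-approximation of $\X$ cannot easily overcome.

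First, you never reduce the problem to a tractable model. The paper's actual proof does \emph{not} analyze a full CW-approximation of $\X$, whose spectral sequence would have unboundedly many generators with unknown differentials. Instead it constructs the explicit minimal cellular $E_2$-algebra $\mathbf{A}_{\kk} = \mathbf{E_2}(S_{\kk}^{(1,0),0}\sigma_0 \oplus S_{\kk}^{(1,1),0}\sigma_1) \cup^{E_2}_{\sigma_1^2-\sigma_0^2} D_{\kk}^{(2,0),1}\rho$ with only three cells, builds a map $c\colon \mathbf{A}_{\kk} \to \X$, shows $H^{E_2}_{x,d}(\X,\mathbf{A}_{\kk}) = 0$ for $d < \rk(x)/2$, and then invokes \cite[Corollary 15.10]{Ek} to transfer the desired vanishing from $\overline{\mathbf{A}_{\kk}}/\sigma_\epsilon$ to $\mathbf{\overline{X}}/\sigma_\epsilon$. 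That transfer step is what makes the rest of the argument finite and computable. Your approach of working directly with $\mathbf{C}$ would need to control the $d^1, d^2, \dots$ differentials on all cells simultaneously, which you do not do, and the slope-bookkeeping heuristic alone does not accomplish it: cells at bidegree $(\rk = n, d = n-1)$ with $n \geq 2$ and their iterated products, brackets, and Dyer--Lashof operations sit on or near the critical line, and you need to know that exactly the right subspace dies.

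Second, you skip the reduction to field coefficients, which is essential for the actual computation. Over an arbitrary commutative ring $\kk$, the $E^1$-page of the cell-attachment spectral sequence does not have the clean free graded-commutative description $\Lambda(L)$ that the paper exploits (via \cite[Theorem 16.4, Section 16.2]{Ek}). The paper first reduces to $\kk = \mathbb{Z}$, proves finite generation of $H_*(\overline{\mathbf{A}_{\mathbb{Z}}}/\sigma_\epsilon)$, and then further reduces to $\kk = \mathbb{F}_\ell$ so that the CDGA $(E^1, d^1)$ is explicitly a free graded-commutative algebra. Only then does it introduce a ``computational filtration'' giving $\sigma_{1-\epsilon}, \rho$ filtration $0$ and all other generators of $L/\mathbb{F}_\ell\{\sigma_\epsilon\}$ their homological degree, which peels the differential $D(\rho) = \pm\sigma_{1-\epsilon}^2$ off the rest; after K\"unneth the surviving classes are $\mathbb{F}_\ell\{1, \sigma_{1-\epsilon}\} \otimes \Lambda(L/\mathbb{F}_\ell\{\sigma_0,\sigma_1,\rho\})$ for $\ell \neq 2$, but for $\ell = 2$ an extra factor $\mathbb{F}_2[\rho^2]$ survives because $\rho^2 = Q^1(\rho) \neq 0$ in characteristic 2. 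That class sits exactly on slope $1/2$ and must be tracked; your sketch does not notice it. Finally, the proposed ``induction on $\rk(x)$'' at the end is not the way the argument closes — the paper closes it by the explicit slope verification on the computational filtration's first page, not by induction — and the cofiber sequence you cite only gives a long exact sequence relating $H_*(\mathbf{\overline{C}})$ to $H_*(\mathbf{\overline{C}}/\sigma_\epsilon)$, which by itself does not yield a vanishing range without the more detailed filtration analysis.
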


\begin{theorem} \label{theorem stab 2}
Let $\kk$ be a commutative $\mathbb{Z}[1/2]$-algebra, let $\textbf{X} \in \Alg_{E_2}(\mathsf{sMod}_{\mathds{k}}^{\mathsf{H}})$ be such that $H_{0,0}(\mathbf{X})=0$,  $H_{x,d}^{E_2}(\textbf{X})=0$ for $d<\rk(x)-1$, and  $H_{*,0}(\mathbf{\overline{X}})=\frac{\mathds{k}[\sigma_0,\sigma_1]}{(\sigma_1^2-\sigma_0^2)}$ as a ring, for some classes $\sigma_{\epsilon} \in H_{(1,\epsilon),0}(\textbf{X})$. 
Suppose in addition that for some $\epsilon \in \{0,1\}$ we have:
\begin{enumerate}[(i)]
    \item $\sigma_{\epsilon} \cdot - : H_{(1,1-\epsilon),1}(\textbf{X}) \rightarrow H_{(2,1),1}(\textbf{X})$ is surjective.
    \item $\coker(\sigma_{\epsilon} \cdot-: H_{(1,\epsilon),1}(\textbf{X}) \rightarrow H_{(2,0),1}(\textbf{X}))$ is generated by $Q_{\mathds{k}}^1(\sigma_0)$ as a $\mathbb{Z}$-module.
    \item $\sigma_{1-\epsilon} \cdot Q_{\mathds{k}}^1(\sigma_0) \in H_{(3,1-\epsilon),1}(\textbf{X})$ lies in the image of $\sigma_{\epsilon}^2 \cdot -:H_{(1,1-\epsilon),1}(\textbf{X}) \rightarrow H_{(3,1-\epsilon),1}(\textbf{X})$.
\end{enumerate}
Then $H_{x,d}(\mathbf{\overline{X}}/\sigma_{\epsilon})=0$ for $3d \leq 2 \rk(x)-4$. 
\end{theorem}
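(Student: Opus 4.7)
The plan is to follow the cell-attachment strategy for $E_2$-algebras from \cite[Section 18]{Ek}, refined as in \cite{E2}: improve the $E_2$-vanishing of $\mathbf{X}$ by attaching cells that cancel the obstructions on the critical diagonal $d = \rk(x)-1$, then deduce the slope $2/3$ stability of $\overline{\mathbf{X}}/\sigma_\epsilon$ by the same cofibre-sequence / skeletal-filtration argument that underlies Theorem \ref{theorem stab 1}, now applied to the modified algebra. The first step is to take a CW-approximation $\mathbf{C} \xrightarrow{\sim} \mathbf{X}$ with cells in bidegrees matching $H^{E_2}_{*,*}(\mathbf{X})$; by hypothesis all cells live on or above the line $d = \rk(x) - 1$, and the rank-$1$ cells are precisely $\sigma_0, \sigma_1$ in degree $0$.

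The key observation is that on the critical line in rank $2$, hypotheses (i) and (ii) pin down the $E_2$-cells tightly: (i) eliminates any cell in bidegree $((2,1),1)$ once one quotients by $\sigma_\epsilon \cdot -$, while (ii) reduces the $((2,0),1)$ contribution to the single class $Q_{\mathds{k}}^1(\sigma_0)$. Since $\mathds{k}$ is a $\mathbb{Z}[1/2]$-algebra, the relation $-2 Q_{\mathds{k}}^1(\sigma_0) = [\sigma_0,\sigma_0]$ realises this class as the Browder bracket of $\sigma_0$ with itself; one can then attach a single $E_2$-cell $\rho$ of bidegree $((2,0),2)$ along $Q_{\mathds{k}}^1(\sigma_0)$ to obtain a modified algebra $\mathbf{X}'$ in which this critical obstruction is killed. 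Hypothesis (iii) is precisely the input needed to extend the cancellation to rank $3$: the class $\sigma_{1-\epsilon} \cdot Q_{\mathds{k}}^1(\sigma_0)$, which would otherwise survive in $H^{E_2}_{(3, 1-\epsilon), 1}(\mathbf{X}')$ and obstruct the slope $2/3$ vanishing, lies in the image of $\sigma_\epsilon^2 \cdot -$ and is therefore killed after passing to $\overline{\mathbf{X}'}/\sigma_\epsilon$.

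Having established that $\mathbf{X}'$ satisfies an $E_2$-homology vanishing of slope $2/3$ in a sufficient range, I would apply the spectral sequence of the skeletal filtration of (a CW-approximation of) $\overline{\mathbf{X}'}/\sigma_\epsilon$ to deduce $H_{x,d}(\overline{\mathbf{X}'}/\sigma_\epsilon) = 0$ for $3d \leq 2\rk(x)-3$, and then transfer back to $\overline{\mathbf{X}}/\sigma_\epsilon$ via the cofibre sequence coming from the attachment of $\rho$; since $\rho$ has bidegree $((2,0),2)$ its contribution remains inside the target vanishing region. The main obstacle is this second paragraph: rigorously establishing the improved $E_2$-vanishing for $\mathbf{X}'$ requires a careful inductive control of the critical-line $E_2$-cells in all ranks, using the $E_2$-algebra structure to reduce higher-rank obstructions to Browder brackets and Dyer--Lashof operations applied to $\sigma_0, \sigma_1, \rho$, and then checking via (i)--(iii) that these either vanish or are $\sigma_\epsilon$-divisible after quotient.
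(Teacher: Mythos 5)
The direction of your cell-attachment is backwards, and it matters. You propose to modify $\mathbf{X}$ by attaching new cells to obtain $\mathbf{X}'$ and then transfer the result back to $\mathbf{X}$; the paper's strategy (and the one required by the comparison machinery of \cite[Corollary 15.10]{Ek}) is the opposite: one builds a \emph{small finite CW $E_2$-algebra} $\mathbf{S}_{\kk}$ and a map $f\colon \mathbf{S}_{\kk}\to\mathbf{X}$, shows the \emph{relative} $E_2$-homology $H^{E_2}_{x,d}(\mathbf{X},\mathbf{S}_{\kk})$ vanishes for $d<\tfrac23\rk(x)$, and thereby transfers the vanishing of $H_{*,*}(\overline{\mathbf{S}_{\kk}}/\sigma_\epsilon)$ to $H_{*,*}(\overline{\mathbf{X}}/\sigma_\epsilon)$. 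Attaching cells \emph{to} $\mathbf{X}$ does not improve $H^{E_2}_{*,*}(\mathbf{X})$ in the range you need — the $E_2$-cells of $\mathbf{X}$ sitting on the slope-$1/2$ line in arbitrary rank are unaffected by adding a single new cell — and there is no cofibre sequence in $\sMod_{\kk}^{\mathsf{H}}$ attached to an $E_2$-algebraic pushout that would let you ``transfer back'' as you describe.

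Your identification of $\rho$ is also off. The paper's cell $\rho$ has bidegree $((2,0),1)$ and kills the relation $\sigma_1^2-\sigma_0^2$ in degree $0$ (exactly as in $\mathbf{A}_{\kk}$ from Theorem~\ref{theorem stab 1}); it is the cells $X$ (of bidegree $((2,0),2)$), $Y$ (of bidegree $((2,1),2)$), and $Z$ (of bidegree $((3,1-\epsilon),2)$) that record the witnesses extracted from hypotheses~(i)--(iii), namely that $Q^1_{\kk}(\sigma_1)-\sigma_\epsilon x - tQ^1_{\kk}(\sigma_0)$, $[\sigma_0,\sigma_1]-\sigma_\epsilon y$, and $\sigma_{1-\epsilon}Q^1_{\kk}(\sigma_0)-\sigma_\epsilon^2 z$ vanish. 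You instead try to kill $Q^1_{\kk}(\sigma_0)$ itself; note that hypothesis~(ii) only says this class \emph{generates} the cokernel of $\sigma_\epsilon\cdot-$, not that it must die, and $[\sigma_0,\sigma_0]$ being $-2Q_{\kk}^1(\sigma_0)$ does not make $Q_{\kk}^1(\sigma_0)$ vanish even over $\mathbb{Z}[1/2]$ (antisymmetry of the Browder bracket gives no constraint on $[\sigma_0,\sigma_0]$ when $|\sigma_0|=0$).

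Finally, the step you flag as ``the main obstacle'' is in fact the entire analytical content of the theorem, and it is not an inductive reduction over ranks but an explicit computation. After reducing (via base change, finite generation, and universal coefficients) to $\kk=\mathbb{F}_\ell$ with $\ell$ odd or zero, one filters $\mathbf{S}$ by the cell-attachment filtration, identifies the $E^1$-page of the resulting spectral sequence for $\overline{\mathbf{S}}/\sigma_\epsilon$ as a free graded-commutative algebra $\Lambda(L/\mathbb{F}\{\sigma_\epsilon\})$ on a Lie--Dyer--Lashof basis, and then introduces several \emph{auxiliary} multiplicative filtrations that strip away the cells of slope $\geq 2/3$ until only a small explicit CDGA $(\Lambda(\mathbb{F}\{\sigma_{1-\epsilon},\rho,Q^1(\sigma_0),Z\}),D)$ remains; the bound $3d\leq 2\rk(x)-3$ is obtained by computing $\ker/\mathrm{im}$ of this CDGA by hand. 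Without this argument you cannot conclude slope-$2/3$ stability from slope-$2/3$ $E_2$-vanishing alone; the generic theorem~\cite[Theorem 18.1]{Ek} only yields slope $1/2$ under the hypotheses you have.
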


\begin{theorem} \label{thm stab 3}
Let $\textbf{X} \in \Alg_{E_2}(\mathsf{sMod}_{\FF}^{\mathsf{H}})$ be such that $H_{0,0}(\mathbf{X})=0$,  $H_{x,d}^{E_2}(\textbf{X})=0$ for $d<\rk(x)-1$, and  $H_{*,0}(\mathbf{\overline{X}})=\frac{\FF[\sigma_0,\sigma_1]}{(\sigma_1^2-\sigma_0^2)}$ as a ring, for some classes $\sigma_{\epsilon} \in H_{(1,\epsilon),0}(\textbf{X})$. 
Suppose in addition that for some $\epsilon \in \{0,1\}$ we have:
\begin{enumerate}[(i)]
    \item $\sigma_{\epsilon} \cdot - : H_{(1,1-\epsilon),1}(\textbf{X}) \rightarrow H_{(2,1),1}(\textbf{X})$ is surjective.
    \item $\coker(\sigma_{\epsilon} \cdot-: H_{(1,\epsilon),1}(\textbf{X}) \rightarrow H_{(2,0),1}(\textbf{X}))$ is generated by $Q_{\FF}^1(\sigma_0)$.
    \item $\sigma_{1-\epsilon} \cdot Q_{\FF}^1(\sigma_0) \in H_{(3,1-\epsilon),1}(\textbf{X})$ lies in the image of $\sigma_{\epsilon}^2 \cdot -:H_{(1,1-\epsilon),1}(\textbf{X}) \rightarrow H_{(3,1-\epsilon),1}(\textbf{X})$.
    \item $\sigma_0 \cdot Q_{\FF}^1(\sigma_0) \in H_{(3,0),1}(\textbf{X})$ lies in the image of $\sigma_{\epsilon}^2 \cdot -:H_{(1,0),1}(\textbf{X}) \rightarrow H_{(3,0),1}(\textbf{X})$.
\end{enumerate}
Then there is a class $\theta \in H_{(4,0),2}(\X)$ such that $H_{x,d}(\mathbf{\overline{X}}/(\sigma_{\epsilon},\theta))=0$ for $3d \leq 2 \rk(x)-5$.     
\end{theorem}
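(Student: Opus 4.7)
The plan is to adapt the cellular $E_2$-algebra approach of \cite{Ek} and \cite{E2}, especially the secondary stability argument of \cite[Lemma 5.6, Theorem 5.12]{E2}, to the $\mathsf{H}$-graded setting. Start by choosing a CW-approximation $\mathbf{C}\xrightarrow{\sim}\X$ whose $(x,d)$-cells are counted by $\dim_{\FF} H_{x,d}^{E_2}(\X)$, which is possible because $\FF$ is a field. Since $H_{x,d}^{E_2}(\X)=0$ for $d<\rk(x)-1$ and $H_{*,0}(\mathbf{\overline{X}})$ has the prescribed form, one can arrange $\mathbf{C}$ in low bidegrees to have two $0$-cells $\sigma_0,\sigma_1$ in gradings $(1,0),(1,1)$, a $1$-cell in grading $(2,0)$ enforcing the relation $\sigma_1^2=\sigma_0^2$, and further $1$-cells whose count is controlled by hypotheses (i), (ii), (iii).

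I would then mimic the proof of Theorem \ref{theorem stab 2}: set up the skeletal filtration of $\mathbf{\overline{X}}/\sigma_\epsilon$ coming from this CW-approximation and use (i), (ii), (iii) to cancel $1$-cells in small grading. Over a $\mathbb{Z}[1/2]$-algebra this completes the argument thanks to the identity $-2Q^{1}_{\kk}(-)=[-,-]$ together with the Browder bracket, but over $\FF$ this identity is trivial and a surviving cell originating from the Araki--Kudo class $Q^{1}(\sigma_0)\in H_{(2,0),1}(\X)$ remains, preventing us from upgrading the slope-$1/2$ vanishing of Theorem \ref{theorem stab 1} to a slope-$2/3$ vanishing for $\mathbf{\overline{X}}/\sigma_\epsilon$.

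I would then take $\theta\in H_{(4,0),2}(\X)$ to be (a cellular representative of) the first obstruction past the slope-$1/2$ line, with the natural candidate being the square $Q^{1}(\sigma_0)\cdot Q^{1}(\sigma_0)$. Condition (iv), together with (iii), ensures that both $\sigma_0\cdot Q^{1}(\sigma_0)$ and $\sigma_1\cdot Q^{1}(\sigma_0)$ lie in the image of $\sigma_\epsilon^{2}\cdot-$; this is exactly what is needed for $\theta$ to be well-defined up to the controlled indeterminacy of Remarks \ref{rem indeterminacy} and \ref{rem theta well-defined}, and for coning off by $\theta$ in $\mathbf{\overline{X}}/(\sigma_\epsilon,\theta)$, realized via the iterated cofibre construction of \cite[Section 12.2.3]{Ek}, to eliminate the anomalous $Q^{1}(\sigma_0)$-driven cells.

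Finally, I would run the skeletal spectral sequence for $\mathbf{\overline{X}}/(\sigma_\epsilon,\theta)$ and read off the desired vanishing line $3d\leq 2\rk(x)-3$. The hard part will be the low-rank cell-by-cell bookkeeping: verifying that no further characteristic-$2$ Araki--Kudo class or product involving $Q^{1}(\sigma_0)$ yields a surviving cell below the slope-$2/3$ line outside the region killed by $\theta$, and that the differentials of the spectral sequence cannot transport new classes into this region. This will require a careful matching of the $E_2$-operations in $\FF$-homology with the images and cokernels specified by hypotheses (i)--(iv), and a propagation of the vanishing line from the $E^{1}$-page all the way to $E^{\infty}$.
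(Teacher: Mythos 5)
Your high-level strategy — build a cellular model, filter $\mathbf{\overline{X}}/(\sigma_\epsilon,\theta)$, and run the spectral sequence — is in the spirit of the paper's proof. However, there are two genuine gaps.

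First, you misidentify the class $\theta$. You propose $\theta=Q^1(\sigma_0)\cdot Q^1(\sigma_0)$, but this is not the obstruction class and does not work. In the cell-attachment spectral sequence for $\mathbf{A}_{\FF}$ (the model built from $\sigma_0,\sigma_1$ together with a $1$-cell $\rho$ killing $\sigma_1^2-\sigma_0^2$), both $Q^1(\sigma_0)^2$ and $\rho^2$ sit in tridegree $((4,0),2,\,\cdot\,)$, but $Q^1(\sigma_0)^2$ has filtration $0$ while $\rho^2$ has filtration $2$. The paper's $\theta$ is, by Definition~\ref{def theta}, any lift of the permanent cycle $[\rho^2]\in F^\infty_{(4,0),2,0}$, pushed forward to $\X$ via an $E_2$-map $\mathbf{A}\to\X$. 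This choice is what ensures that, after coning off, the $E^1$-page of the corresponding spectral sequence becomes $\FF[L/\FF\{\sigma_\epsilon\}]/(\rho^2)$, which eliminates the tower of classes $(\rho^2)^k$ living exactly on the slope-$1/2$ line. Taking $\theta=Q^1(\sigma_0)^2$ instead would kill the wrong thing and leave $\rho^2,\rho^4,\dots$ alive, destroying the slope-$2/3$ vanishing. Indeed, Remark~\ref{rem indeterminacy} shows that $Q^1(\sigma_0)^2$ is part of the \emph{indeterminacy} of $\theta$, not a valid value.

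Second, you do not use hypothesis (iv) in the way it is actually needed. You say (iii) and (iv) "are exactly what is needed for $\theta$ to be well-defined," and that is at best a side benefit. The key derived consequence, established in Step~1 of the paper's proof, is that applying the operation $Q^2(-)$ to the identity $\sigma_0\cdot Q^1(\sigma_0)=\sigma_\epsilon^2\cdot\tau$ from (iv), together with the formulae in \cite[Section~16.2.2]{Ek}, yields a class $u\in H_{(4,0),3}(\X)$ with $Q^1(\sigma_0)^3=\sigma_\epsilon^2\cdot u$. This is what lets one attach an extra cell $U$ in grading $(6,0)$ and degree $4$ that kills $Q^1(\sigma_0)^3$. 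Without it, the tower $Q^1(\sigma_0)^k$ — all at slope $1/2$ — would still survive in the reduced CDGA and prevent the slope-$2/3$ conclusion. Your plan never flags $Q^1(\sigma_0)^3$ at all, which means the final "bookkeeping" step would in fact fail to close. Fixing both points would require re-deriving the $Q^1(\sigma_0)^3$-destabilization from (iv) and replacing your proposed $\theta$ with the lift of $\rho^2$.
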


\section{Proving Theorems \ref{theorem stab 1}, \ref{theorem stab 2} and \ref{thm stab 3}} \label{section proof}

We will need some preparation. 
For $\kk$ a commutative ring we define 
$$\Ak:= \mathbf{E_2}(S_{\kk}^{(1,0),0} \sigma_0 \oplus S_{\kk}^{(1,1),0} \sigma_1) \cup_{\sigma_1^2-\sigma_0^2}^{E_2}{D_{\kk}^{(2,0),1} \rho} \in \Alg_{E_2}(\sMod_{\kk}^{\mathsf{H}}).$$

This should be thought of as a ``universal example'' in a sense that will be clear in Sections \ref{proof thm stab 1} and \ref{construction theta}.

\begin{rem}
To attach the $((2,0),1)$-cell $\rho$ we need a map $\partial \Delta_{\kk}^{(2,0),1} \rightarrow \mathbf{E_2}(S_{\kk}^{(1,0),0} \sigma_0 \oplus S_{\kk}^{(1,1),0} \sigma_1)$, whereas $\sigma_1^2-\sigma_0^2$ is a-priori a map $S_{\kk}^{(2,0),0} \rightarrow \mathbf{E_2}(S_{\kk}^{(1,0),0} \sigma_0 \oplus S_{\kk}^{(1,1),0} \sigma_1)$. 

The precise construction is as follows: for any $d$ there is a canonical map $\partial \Delta^{d+1}_{\kk} \rightarrow S^d_{\kk}$ which extends to the graded categories, hence allowing to make sense of cell attachments along maps defined on spheres. 
We will always use this slight abuse of notation for the rest of the paper without further mention. 
\end{rem}

\begin{proposition} \label{prop Ak}
The $E_2$-algebra $\Ak$ satisfies the assumptions of Theorem \ref{theorem stab 1}, i.e $H_{0,0}(\Ak)=0$, $H_{x,d}^{E_2}(\Ak)=0$ for $d<\rk(x)-1$ and $H_{*,0}(\mathbf{\overline{A_{\kk}}})=\kk[\sigma_0,\sigma_1]/(\sigma_1^2-\sigma_0^2)$ as a ring.
\end{proposition}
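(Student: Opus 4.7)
The plan is to exploit the explicit CW $E_2$-algebra presentation of $\Ak$ visible in its definition: it has exactly three cells, namely the $0$-cells $\sigma_0$ and $\sigma_1$ in bidegrees $((1,0),0)$ and $((1,1),0)$, and one $1$-cell $\rho$ in bidegree $((2,0),1)$, all of which satisfy $d=\rk(x)-1$. The vanishing $H_{0,0}(\Ak)=0$ is then immediate: the little $2$-cubes operad has no nullary operations, so the free $E_2$-algebra functor produces nothing in the unit grading $0\in\mathsf{H}$, and this property is preserved under the pushouts defining cell attachments because every cell has positive rank.

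For the $E_2$-homology vanishing $H_{x,d}^{E_2}(\Ak)=0$ when $d<\rk(x)-1$, I would compute the derived indecomposables directly. Since $Q_{\mathbb{L}}^{E_2}$ is a left adjoint it preserves the defining pushout, and $Q_{\mathbb{L}}^{E_2}(\mathbf{E_2}(V))\simeq V$ for cofibrant $V$. The crucial observation is that the attaching class $\sigma_1^2-\sigma_0^2$ lies in the image of the $E_2$-multiplication and therefore maps to zero in the indecomposables, so the induced attaching map is null-homotopic. Consequently
$$Q_{\mathbb{L}}^{E_2}(\Ak)\simeq S_{\kk}^{(1,0),0}\oplus S_{\kk}^{(1,1),0}\oplus S_{\kk}^{(2,0),1},$$
and $H_{x,d}^{E_2}(\Ak)$ is free of rank one on each cell bidegree and zero elsewhere.

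The remaining ring computation is the heart of the argument. The unitalised free $E_2$-algebra $\overline{\mathbf{E_2}(\sigma_0,\sigma_1)}$ has $H_{*,0}=\kk[\sigma_0,\sigma_1]$, since the multiplication on $H_0$ of any $E_2$-algebra is commutative (the Browder bracket raises total degree by one and gives no obstruction in degree zero), and the free construction then produces the free commutative $\kk$-algebra. Attaching the $1$-cell $\rho$ along $\sigma_1^2-\sigma_0^2$ imposes the relation $\sigma_1^2=\sigma_0^2$, yielding the desired presentation. The main obstacle---and the only place requiring care---is to justify rigorously that this attachment imposes precisely this relation and introduces no further classes in $H_{*,0}$. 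I would handle this via the skeletal filtration spectral sequence for the CW structure: the $E^1$-page is the homology of the free $E_2$-algebra on the cells, the unique $d^1$-differential affecting the $(\bullet,0)$-row sends $\rho\mapsto\sigma_1^2-\sigma_0^2$, and no higher differential can land in or leave degree zero for dimensional reasons. This isolates the effect of the attachment to exactly the quotient by the ideal $(\sigma_1^2-\sigma_0^2)$.
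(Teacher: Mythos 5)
Your proposal follows essentially the same route as the paper: read off the derived indecomposables from the CW structure to get the $E_2$-homology, then use the cell-attachment filtration spectral sequence to identify $H_{*,0}$ and show it degenerates in the bottom row. The one place you are slightly looser than the paper is the degeneration step at the end: ``no higher differential can land in or leave degree zero for dimensional reasons'' needs the observation that the source of any $d^r$ ($r\geq 2$) into $E^r_{x,0,0}$ sits in homological degree $1$ and filtration $r$, while the $E^1$-page of this filtration has its degree-$1$ classes (stabilisations of $\rho$, $Q^1_{\mathbb{Z}}(\sigma_i)$, $[\sigma_0,\sigma_1]$) concentrated in filtration $\leq 1$; the paper records this explicitly and it is exactly what makes the degeneration argument go through. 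The paper also first base-changes to $\kk=\mathbb{Z}$ before running the spectral sequence, whereas you run it directly over $\kk$; both are fine since in homological degree $0$ the $E^1$-page is the free commutative $\kk$-algebra on $\sigma_0,\sigma_1$ independently of $\kk$.
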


\begin{proof}
Since $\Ak$ is built using cells then $Q_{\mathbb{L}}^{E_2}(\Ak)=S_{\kk}^{(1,0),0} \sigma_0 \oplus S_{\kk}^{(1,1),0} \sigma_1 \oplus S_{\kk}^{(2,0),1} \rho$, see \cite[Sections 6.1.3 and 8.2.1]{Ek} for details.
Thus $H_{x,d}^{E_2}(\Ak)=0$ for $d<\rk(x)-1$. 

For the homology computations it suffices to consider the case $\kk=\mathbb{Z}$ by the following argument: 

Let us write $- \otimes \kk: \sMod_{\mathbb{Z}} \rightarrow \sMod_{\kk}$ for the base-change functor and for the corresponding functor between $\mathsf{H}$-graded categories. 
Base-change is symmetric monoidal, preserves colimits and satisfies $S_{\mathbb{Z}}^{x,d} \otimes \kk = S_{\kk}^{x,d}$, $\Delta_{\mathbb{Z}}^{x,d} \otimes \kk = \Delta_{\kk}^{x,d}$, and hence we recognize $\Ak= \mathbf{A_{\mathbb{Z}}} \otimes \kk$. 
Thus, the universal coefficients theorem in homological degree $0$ gives that $H_{x,0}(\mathbf{A_{\mathbb{Z}}}) \otimes \kk \xrightarrow{\cong} H_{x,0}(\Ak)$, implying the claimed reduction. 

To simplify notation we will not write $_{\mathbb{Z}}$ for the rest of this proof since we will only treat the case $\kk=\mathbb{Z}$. 
Consider the cell-attachment filtration $\mathbf{fA} \in \Alg_{E_2}((\sMod_{\mathbb{Z}}^{\mathsf{H}})^{\mathbb{Z}_{\leq}})$, which by \cite[Section 6.2.1]{Ek} is given by 
$$\mathbf{fA}= \mathbf{E_2}(S^{(1,0),0,0} \sigma_0 \oplus S^{(1,1),0,0} \sigma_1) \cup_{\sigma_1^2-\sigma_0^2}^{E_2}{D^{(2,0),1,1} \rho},$$
where the last grading represents the filtration stage. 
By \cite[Corollary 10.17]{Ek} there is a spectral sequence 
$$E^1_{x,p,q}=H_{x,p+q,p}(\overline{\mathbf{E_2}(S^{(1,0),0,0} \sigma_0 \oplus S^{(1,1),0,0} \sigma_1 \oplus S^{(2,0),1,1} \rho)}) \Rightarrow H_{x,p+q}(\mathbf{\overline{A}}).$$
The first page of this spectral sequence can be accessed by \cite[Theorems 16.4, 16.7]{Ek} and the description of the homology operation $Q_{\mathbb{Z}}^1(-)$ in \cite[Page 199]{Ek}. 
In homological degrees $p+q \leq 1$ the full answer is given by
\begin{enumerate}[$\bullet$]
    \item $E^1_{x,p,-p}$ vanishes for $p \neq 0$, and $\bigoplus_{x \in \mathsf{H}}{E^1_{x,0,0}}$ is the free $\mathbb{Z}$-module on the set of generators $\{\sigma_0^a \cdot \sigma_1^b: \; a+b \geq 0\}$, where $\sigma_0^0=\sigma_1^0=1$. 
    \item The only elements in homological degree $p+q=1$ are stabilizations by powers of $\sigma_0$ and $\sigma_1$ of one of the classes $\rho$, $Q_{\mathbb{Z}}^1(\sigma_0)$, $Q_{\mathbb{Z}}^1(\sigma_1)$, or $[\sigma_0,\sigma_1]$. 
    Thus they have filtration $p \leq 1$. 
\end{enumerate}

By \cite[Section 16.6]{Ek} the spectral sequence is multiplicative and its differential satisfies $d^1(\sigma_0)=0$, $d^1(\sigma_1)=0$ and $d^1(\rho)=\sigma_1^2-\sigma_0^2$. 
Thus,
$\bigoplus_{x \in \mathsf{H}} E^2_{x,0,0}$ is given as a ring by 
$\mathbb{Z}[\sigma_0,\sigma_1]/(\sigma_1^2-\sigma_0^2)$.
Hence to finish the proof it suffices to show that $E^2_{x,0,0}=E^{\infty}_{x,0,0}$ for any $x \in \mathsf{H}$.
This holds because $d^r$ decreases filtration by $r$ and homological degree by $1$ so $d^r$ vanishes on all the elements of homological degree $1$ for $r \geq 2$.  
\end{proof}

\subsection{Proof of Theorem \ref{theorem stab 1}} \label{proof thm stab 1}
\begin{proof}
We will do a series of reductions to get to the case $\X=\Ak$ for some appropriate coefficients $\kk$, and then we will do a direct computation. 

\textbf{Step 1.} The aim of this step is to reduce to the particular case $\X=\Ak$. 

Each class $\sigma_{\epsilon} \in H_{(1,\epsilon),0}(\X)$ is represented by a homotopy class $\sigma_{\epsilon}: S_{\kk}^{(1,\epsilon),0} \rightarrow \X$. 
Thus there is an $E_2$-algebra map $\mathbf{E_2}(S_{\kk}^{(1,0),0} \sigma_0 \oplus S_{\kk}^{(1,1),0} \sigma_1) \rightarrow \X$ sending $\sigma_0,\sigma_1$ to the corresponding homology classes of $\X$.  
By assumption $\sigma_1^2-\sigma_0^2=0 \in H_{(2,0),0}(\X)$, so picking a nullhomotopy gives an extension to an $E_2$-algebra map $c: \Ak \rightarrow \X$. 

Now we claim that for the map $c$ we have $H_{x,d}^{E_2}(\X,\Ak)=0$ for $d<\rk(x)/2$. 
Indeed, by assumption $H_{x,d}^{E_2}(\X)=0$ for $d<\rk(x)-1$ and by Proposition \ref{prop Ak} $H_{x,d}^{E_2}(\Ak)=0$ for $d<\rk(x)-1$ too. 
Thus, it suffices to show the claim for $(\rk(x)=1,d=0)$. 
Both $\X$ and $\Ak$ are reduced, i.e. $H_{0,0}(-)$ vanishes on both of them, and hence by \cite[Corollary 11.12]{Ek} it suffices to show that $H_{x,0}(\X,\Ak)=0$ for $\rk(x)=1$, which holds by our assumption about the 0th homology of $\X$ and Proposition \ref{prop Ak}. 

Now let us suppose that the theorem holds for $\X=\Ak$. 
By \cite[Corollary 15.10]{Ek} with $\rho(x)= \rk(x)/2$, $\mu(x)=(\rk(x)-1)/2$ and $\mathbf{M}=\mathbf{\overline{A}_{\mathds{k}}}/\sigma_{\epsilon}$ 
we find that $H_{x,d}(B(\mathbf{\overline{X}}, \mathbf{\overline{A_{\kk}}},\mathbf{M}))=0$ for $d< \mu(x)$. 
But, by \cite[Section 12.2.4]{Ek}, $B(\mathbf{\overline{X}}, \mathbf{\overline{A_{\kk}}},\mathbf{M}) \simeq \mathbf{\overline{X}}/\sigma_{\epsilon}$, giving the required reduction. 
Note that we use the ``adapters construction'' of \cite[Section 12.2]{Ek} to view $\mathbf{M}$ as a left $\mathbf{\overline{A_{\kk}}}$-module. 
We will also use this construction in the rest of this proof without explicit mention. 

\textbf{Step 2.} Now we will further reduce to the case $\mathbf{A}_{\F}$, for $\ell$ a prime number or $0$, where $\mathbb{F}_0:=\mathbb{Q}$. 

Recall from the proof of Proposition \ref{prop Ak} that $\Ak= \mathbf{A_{\mathbb{Z}}} \otimes \kk$.
Since base-change preserves colimits, the cofibration sequence 
$S_{\kk}^{(1,\epsilon),0} \otimes \mathbf{\overline{A_{\kk}}} \xrightarrow{\sigma_{\epsilon} \cdot - } \mathbf{\overline{A_{\kk}}}\rightarrow \mathbf{\overline{A_{\kk}}}/\sigma_{\epsilon}$ shows that 
 $\mathbf{\overline{A_{\kk}}}/\sigma_{\epsilon} \cong \mathbf{\overline{A_{\mathbb{Z}}}}/\sigma_{\epsilon} \otimes \kk$. 
Thus, by the universal coefficients theorem it suffices to prove the case $\kk=\mathbb{Z}$. 

We claim that the homology groups of $\mathbf{\overline{A_{\mathbb{Z}}}}$ are finitely generated. 
Indeed, by \cite[Theorem 16.4]{Ek} each entry on the first page of the spectral sequence of the proof of Proposition \ref{prop Ak} is finitely generated. 
Observe that this is a quite general fact that holds for any cellular $E_2$-algebra with finitely many cells by considering the analogous cell attachment filtration. 
We will use this again in the proof of Theorem \ref{theorem stab 2}. 

Hence, by the homology long exact sequence of  $\sigma_{\epsilon} \cdot -$, the homology groups of $\mathbf{\overline{A_{\mathbb{Z}}}}/\sigma_{\epsilon}$ are also finitely generated. 

Thus, it suffices to check the cases $\kk=\F$ for $\ell$ a prime number or $0$, by another application of the universal coefficients theorem and using the finite generation of the homology groups. 

\textbf{Step 3.} We will prove the theorem for a fixed $\ell$, $\kk=\F$ and $\X= \Ak$. 
To simplify notation we will not write the subscripts $\F$ for the rest of this proof. 
Consider the cell attachment filtration $\mathbf{fA} \in \Alg_{E_2}((\sMod_{\F}^{\mathsf{H}})^{\mathbb{Z}_{\leq}})$ as in the proof of Proposition \ref{prop Ak}. 
The filtration $0$ part is given by 
$\mathbf{\overline{fA}}(0)=\overline{\mathbf{E_2}(S^{(1,0),0} \sigma_0 \oplus S^{(1,1),0} \sigma_1)}$ so we can lift the maps $\sigma_{\epsilon}$ to filtered maps 
$\sigma_{\epsilon}: S^{(1,\epsilon),0,0} \rightarrow \mathbf{\overline{fA}}$.
Thus, using adapters we can form the left $\mathbf{\overline{fA}}$-module $\mathbf{\overline{fA}}/\sigma_{\epsilon}$ filtering $\mathbf{\overline{A}}/\sigma_{\epsilon}$.

Since $\gr(-)$ commutes with pushouts and with $\overline{(-)}$ by \cite[Lemma 12.7]{Ek}, we get two spectral sequences

\begin{enumerate}[(i)]
\item $\Scale[0.9]{F^1_{x,p,q}=H_{x,p+q,p}(\overline{\mathbf{E_2}(S^{(1,0),0,0} \sigma_0 \oplus S^{(1,1),0,0} \sigma_1 \oplus S^{(2,0),1,1} \rho)}) \Rightarrow H_{x,p+q}(\mathbf{\overline{A}})}$
\item $\Scale[0.9]{E^1_{x,p,q}=H_{x,p+q,p}(\overline{\mathbf{E_2}(S^{(1,0),0,0} \sigma_0 \oplus S^{(1,1),0,0} \sigma_1 \oplus S^{(2,0),1,1} \rho)}/\sigma_{\epsilon}) \Rightarrow H_{x,p+q}(\mathbf{\overline{A}}/\sigma_{\epsilon})}.$
\end{enumerate}

In order to prove the theorem it suffices to show the following claim 
\begin{claim}
$E^2_{x,p,q}=0$ for $p+q<(\rk(x)-1)/2$.
\end{claim}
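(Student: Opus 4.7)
The plan is to compute the $E^1$ page and $d^1$ differential of spectral sequence (ii) explicitly and then deduce the vanishing by a ``weight counting'' on monomials in the free $E_2$-algebra. Since $\sigma_\epsilon$ lifts to filtration $0$ in $\mathbf{fA}$, it passes through the cell-attachment filtration and the cofibration defining $\overline{\mathbf{fA}}/\sigma_\epsilon$ yields, on the first pages, a long exact sequence
$$\cdots \to F^1_{x-(1,\epsilon),p,q} \xrightarrow{\sigma_\epsilon \cdot -} F^1_{x,p,q} \to E^1_{x,p,q} \to F^1_{x-(1,\epsilon),p,q-1} \to \cdots$$
compatible with $d^1$. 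I can therefore leverage the description of $F^1$ used in the proof of Proposition \ref{prop Ak}: by \cite[Theorems 16.4, 16.7]{Ek} it is spanned by Lie monomials in $\sigma_0, \sigma_1, \rho$ decorated by $Q^1$-operations, where each $\sigma_j$ contributes $(1,0)$ to (rank, homological degree), each $\rho$ contributes $(2,1)$, and each bracket or $Q^1$ adds $(0,1)$; the filtration $p$ records the $\rho$-count.

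Next I would identify the $d^1$ differential. As in the proof of Proposition \ref{prop Ak}, by \cite[Section 16.6]{Ek} it is multiplicative, vanishes on $\sigma_0, \sigma_1$, satisfies $d^1(\rho) = \sigma_1^2-\sigma_0^2$, and commutes with all $E_2$-operations; by the adapters construction of \cite[Section 12.2]{Ek} it commutes with $\sigma_\epsilon \cdot -$ and so descends through the long exact sequence above. The net effect in the $/\sigma_\epsilon$ quotient is that $d^1$ trades a $\rho$-factor for $\pm\sigma_{1-\epsilon}^2$ times the rest, keeping the rank constant while decreasing filtration and homological degree by one.

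With these tools, the claim reduces to a counting inequality: for a basis monomial with $s$ sigmas, $r$ rhos and $b$ bracket/$Q^1$ decorations, one has $\rk(x)=s+2r$ and $p+q=r+b$, so the condition $p+q<(\rk(x)-1)/2$ becomes $s\geq 2b+2$. I would then argue, by induction on $r$, that in the $/\sigma_\epsilon$ quotient any such $d^1$-cycle is a $d^1$-boundary: when $r\geq 1$ and $r$ is a unit of $\F$, applying $d^1$ to an ``antiderivative'' in $\rho$ is immediate; when $\ell \mid r$, several $\rho$-factors have to be traded simultaneously, which is the delicate step. I expect the main obstacle to be this characteristic-sensitive bookkeeping (especially at $\ell=2$, where $Q^1$ is non-trivial on every class), but the slack $s\geq 2b+2$ gives enough room to absorb all the relevant Lie and $Q^1$ classes, so the vanishing should follow.
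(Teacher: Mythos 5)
The core idea of your setup is sound: you correctly identify $E^1$ as $F^1/(\sigma_\epsilon)$ (since $\sigma_\epsilon\cdot-$ is injective on $F^1$), you have the right description of $d^1$, and your weight count $\rk(x)=s+2r$, $p+q=r+b$ correctly translates the claim into ``monomials with $s\geq 2b+2$ must vanish in $E^2$''. However the crucial step --- showing such cycles are boundaries --- is not actually proven. You propose an induction on the number $r$ of $\rho$-factors via an ``antiderivative in $\rho$'', acknowledge that this breaks when $\ell\mid r$ (and especially at $\ell=2$), and then assert that ``the slack $s\geq 2b+2$ gives enough room to absorb all the relevant classes''. That sentence is the whole content of the claim, and it is not substantiated. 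This is precisely the delicate point, and the hand-wave does not resolve it: for instance, you need your argument to \emph{not} produce a boundary for $\rho^{2k}$ when $\ell=2$ (these classes genuinely survive in $E^2$ and sit on the critical line of slope exactly $1/2$), and it is not clear from the proposal how the inductive step discriminates between these and the cycles you want to kill.

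The paper avoids this bookkeeping entirely by introducing a second, ``computational'' multiplicative filtration on the CDGA $(E^1,d^1)$, assigning filtration $0$ to $\sigma_{1-\epsilon}$ and $\rho$ and filtration equal to homological degree to the remaining generators of $L/\F\{\sigma_\epsilon\}$. The associated graded then splits, by K\"unneth, as $(\Lambda(\F\{\sigma_{1-\epsilon},\rho\}),D)\otimes(\Lambda(L/\F\{\sigma_0,\sigma_1,\rho\}),0)$ where $D(\rho)=(-1)^\epsilon\sigma_{1-\epsilon}^2$. The first factor has explicit homology $\F\{1,\sigma_{1-\epsilon}\}$ for $\ell\neq 2$ (graded-commutativity kills $\rho^2$) and $\F_2\{1,\sigma_{1-\epsilon}\}\otimes\F_2[\rho^2]$ for $\ell=2$; the second factor has zero differential. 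One then just checks that every surviving generator has slope $\geq 1/2$ (in particular $\rho^2$ has slope exactly $1/2$, sitting just outside the claimed vanishing region), which gives the bound. This decomposition is what makes the characteristic-$2$ case transparent rather than ``delicate'': it separates the Koszul-type differential involving $\rho,\sigma_{1-\epsilon}$ from the rest, so there is nothing left to absorb. Your outline would need to recover something equivalent to this reduction in order to be complete; as written it reduces the claim to the hard part and then stops.
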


We will need some preparation. 
As in the proof of Proposition \ref{prop Ak}, the first spectral sequence is multiplicative and its differential satisfies $d^1(\sigma_0)=0$, $d^1(\sigma_1)=0$ and $d^1(\rho)=\sigma_1^2-\sigma_0^2$. 
Moreover, by \cite[Theorem 16.4, Section 16.2]{Ek},
its first page is given by $\Lambda(L)$ where $\Lambda(-)$ denotes the free graded-commutative algebra, and $L$ is the $\F$-vector space with basis $Q_{\ell}^I(y)$ such that $y$ is a basic Lie word in $\{\sigma_0, \sigma_1, \rho\}$ and $I$ is admissible, in the sense of \cite[Section 16.2]{Ek}.

The second spectral sequence is a module over the first one, and we can identify $E^1=F^1/(\sigma_{\epsilon})$ because $\sigma_{\epsilon} \cdot -$ is injective in $F^1$, by the above description of $F^1$, and its image is the ideal $(\sigma_{\epsilon})$.
Therefore $E^1= \Lambda(L/\F\{\sigma_{\epsilon}\})$ and hence the $d^1$ differential in $F^1$ completely determines the $d^1$ differential in $E^1$, making it into a CDGA. 

\begin{proof}[Proof of Claim.]
$E^2$ is given by the homology of the CDGA $(E^1,d^1)$, and to prove the result we will introduce a ``computational filtration'' in this CDGA that has the virtue of filtering away most of the $d^1$ differential.

We let $\mathcal{F}^{\bullet} E^1$ be the filtration in which $\sigma_{1-\epsilon}$ and $\rho$ are given filtration $0$, the remaining elements of a homogeneous basis of $L/\F\{\sigma_{\epsilon}\}$ extending these are given filtration equal to their homological degree, and then we extend the filtration to $\Lambda(L/\F\{\sigma_{\epsilon}\})$ multiplicatively. 

Since $d^1$ preserves this filtration we get a spectral sequence converging to $E^2$ whose first page is the homology of the associated graded $\gr(\mathcal{F}^{\bullet} E^1)$. 
Thus, it suffices to show that $H_*(\gr(\mathcal{F}^{\bullet} E^1))$ already has the required vanishing line. 

Let us denote by $D$ the corresponding differential on this computational spectral sequence. 
Since $d^1$ lowers homological degree by $1$ we can decompose $(\gr(\mathcal{F}^{\bullet} E^1),D)$ as a tensor product 
$$(\Lambda(\F\{\sigma_{1-\epsilon},\rho\}),D) \otimes_{\F} (\Lambda(L/\F\{\sigma_0,\sigma_1,\rho\}),0),$$
where $D$ satisfies $D(\sigma_{1-\epsilon})=0$ and $D(\rho)=(-1)^{\epsilon} \sigma_{1-\epsilon}^2$. 
By the Künneth theorem the homology of this tensor product is $\F\{1,\sigma_{1-\epsilon}\} \otimes_{\F} \Lambda(L/\F\{\sigma_0,\sigma_1,\rho\})$ when $l \neq 2$, because graded-commutativity forces $\rho^2=0$; and  $\mathbb{F}_2\{1,\sigma_{1-\epsilon}\} \otimes_{\mathbb{F}_2} \mathbb{F}_2[\rho^2] \otimes_{\mathbb{F}_2} \Lambda(L/\mathbb{F}_2\{\sigma_0,\sigma_1,\rho\})$ if $l=2$.

By the \textit{slope} of an element we shall mean the ratio between its homological degree and the rank of its $\mathsf{H}$-valued grading. 
Since $\rho^2$ has slope $1/2$ and $\sigma_{1-\epsilon}$ has homological degree $0$ and rank $1$, in order to prove the required vanishing line it suffices to show that all the elements in $L/\F\{\sigma_0,\sigma_1,\rho\}$ have slope $\geq 1/2$. 
Since the slope of $Q_{\ell}^I(y)$ is always larger than or equal to the one of $y$, and the slope of the Browder bracket of two elements is always greater than the minimum of their slopes, the only elements in $L$ that have slope less than $1/2$ are those in the span of $\sigma_0, \sigma_1$, giving the result. 
\end{proof} \end{proof}

\subsection{Proof of Theorem \ref{theorem stab 2}}
\begin{proof}
The idea of the proof is identical to the previous one, so we will not spell out all the details, but we will focus instead in the extra complications that arise in the computations, specially in the later steps.  

\textbf{Step 1.} We will construct a certain cellular $E_2$-algebra $\mathbf{S_{\kk}}$ and show that it suffices to prove that $H_{x,d}(\mathbf{\overline{S_{\kk}}}/\sigma_{\epsilon})=0$ for $3d \leq 2 \rk(x)-4$. 

The assumptions of the statement imply that $[\sigma_0,\sigma_1]= \sigma_{\epsilon} \cdot y$ for some $y \in H_{(1,1-\epsilon),1}(\X)$, that $Q_{\kk}^1(\sigma_1)= \sigma_{\epsilon} \cdot x + t Q_{\kk}^1(\sigma_0)$ for some $x \in H_{(1,\epsilon),1}(\X)$ and some $t \in \mathbb{Z}$, and that $\sigma_{1-\epsilon} \cdot Q_{\kk}^1(\sigma_0)= \sigma_{\epsilon}^2 \cdot z  \in H_{(3,1-\epsilon),1}(\X)$ for some $z \in H_{(1,1-\epsilon),1}(\X)$.

Let 
\begin{equation*}
    \begin{aligned}
        \mathbf{S_{\kk}}:=\mathbf{E_2}(S_{\kk}^{(1,0),0} \sigma_0 \oplus S_{\kk}^{(1,1),0} \sigma_1 \oplus S_{\kk}^{(1,\epsilon),1} x \oplus S_{\kk}^{(1,1-\epsilon),1} y \oplus S_{\kk}^{(1,1-\epsilon),1}z) \\\cup_{\sigma_1^2-\sigma_0^2}^{E_2}{D_{\kk}^{(2,0),1} \rho}  \cup_{Q_{\kk}^1(\sigma_1)-\sigma_{\epsilon} \cdot x-t Q_{\kk}^1(\sigma_0)}^{E_2} {D_{\kk}^{(2,0),2} X} \cup_{[\sigma_0,\sigma_1]-\sigma_{\epsilon} \cdot y}^{E_2}{D_{\kk}^{(2,1),2} Y} \\ \cup_{\sigma_{1-\epsilon} \cdot Q_{\kk}^1(\sigma_0)- \sigma_{\epsilon}^2 \cdot z}^{E_2}{D_{\kk}^{(3,1-\epsilon),2} Z} \in \Alg_{E_2}(\sMod_{\kk}^{\mathsf{H}})
    \end{aligned}
\end{equation*}

By proceeding as in Step 1 of the proof of Theorem \ref{theorem stab 1} there is an $E_2$-algebra map $f: \mathbf{S_{\kk}} \rightarrow \X$ sending each of $\sigma_0,\sigma_1, x,y,z$ to the corresponding homology classes in $\X$ with the same name. 

\begin{claim}
$H_{x,d}^{E_2}(\X,\mathbf{S_{\kk}})=0$ for $d<2\rk(x)/3$.     
\end{claim}

Assuming the claim, we can apply \cite[Corollary 15.10]{Ek} with $\rho(x)= 2\rk(x)/3$, $\mu(x)=(2\rk(x)-3)/3$ and $\mathbf{M}=\mathbf{\overline{S_{\kk}}}/\sigma_{\epsilon}$ to obtain the required reduction. 
Thus, to finish this step we just need to show the claim. 

\begin{proof}[Proof of Claim.]
Proceeding as in the proof of Proposition \ref{prop Ak} one can compute $Q_{\mathbb{L}}^{E_2}(\mathbf{S_{\kk}})$ and check that $H_{x,d}^{E_2}(\mathbf{S_{\kk}})=0$ for $d<\rk(x)-1$. 
Since $\X$ has the same vanishing line on its $E_2$-homology it suffices to check that $H_{x,d}^{E_2}(\X,\mathbf{S_{\kk}})=0$ for $(\rk(x)=1, d=0)$ and $(\rk(x)=2, d=1)$. 

For $(\rk(x)=1,d=0)$ we use \cite[Corollary 11.12]{Ek} to reduce it to showing that $H_{x,0}(\X,\mathbf{S_{\kk}})=0$, as in Step 1 in the proof of Theorem \ref{theorem stab 1}. 
This holds because the 0th homology of $\X$ in rank $1$ is generated by $\sigma_0,\sigma_1$, which factor trough $f$ by construction. 

For $(\rk(x)=2,d=1)$ the argument will be more elaborate but use the same ideas. 
Pick sets of $\kk$-module generators $\{u_a\}_{a \in A}$ for $H_{0,1}(\X)$ and $\{v_b\}_{b \in B}$ for $H_{(1,0),1}(\X) \oplus H_{(1,1),1}(\X)$, where each $v_b$ has $\mathsf{H}$-grading $(1,\epsilon_b)$ for some $\epsilon_b \in \{0,1\}$. 
Consider $\mathbf{\Tilde{S}_{\kk}}:= \mathbf{S_{\kk}} \oplus^{E_2} \mathbf{E_2}(\bigoplus_{a \in A} S_{\kk}^{0,1} u_a \oplus \bigoplus_{b \in B} S_{\kk}^{(1,\epsilon_b),1} v_b)$.

The map $f: \mathbf{S_{\kk}} \rightarrow \X$ factors trough the canonical map $\mathbf{S_{\kk}} \rightarrow \mathbf{\Tilde{S}_{\kk}}$ in the obvious way, so we get a long exact sequence in $E_2$-homology for the triple $\mathbf{S_{\kk}} \rightarrow \mathbf{\Tilde{S}_{\kk}} \rightarrow \X$: 
$$ \cdots \rightarrow H_{x,1}^{E_2}(\mathbf{\Tilde{S}_{\kk}},\mathbf{S_{\kk}}) \rightarrow H_{x,1}^{E_2}(\X,\mathbf{S_{\kk}}) \rightarrow H_{x,1}^{E_2}(\X,\mathbf{\Tilde{S}_{\kk}}) \rightarrow \cdots.$$
The first term vanishes by direct computation of $Q_{\mathbb{L}}^{E_2}(\mathbf{\Tilde{S}_{\kk}})$ because $\rk(x)=2$, so it suffices to show that the third term vanishes too. 
Using \cite[Corollary 11.12]{Ek} it suffices to show that $H_{x',d'}(\X,\mathbf{\Tilde{S}_{\kk}})=0$ for $d' \leq 1$ and $\rk(x') \leq 2$. 

For a given $x' \in \mathsf{H}$ with $\rk(x') \leq 2$ we have an exact sequence 
\[\Scale[0.86]{\cdots \rightarrow H_{x',1}(\mathbf{\Tilde{S}_{\kk}}) \rightarrow H_{x',1}(\X) \rightarrow H_{x',1}(\X,\mathbf{\Tilde{S}_{\kk}}) \rightarrow H_{x',0}(\mathbf{\Tilde{S}_{\kk}}) \rightarrow H_{x',0}(\X) \rightarrow H_{x',0}(\X,\mathbf{\Tilde{S}_{\kk}}) \rightarrow 0},\]

so it suffices to show that $H_{x',1}(\mathbf{\Tilde{S}_{\kk}}) \rightarrow H_{x',1}(\X)$ is surjective and that $H_{x',0}(\mathbf{\Tilde{S}_{\kk}}) \rightarrow H_{x',0}(\X)$ is an isomorphism. 

The isomorphism in degree $0$ holds because $H_{*,0}(\mathbf{\Tilde{S}_{\kk}})=\kk[\sigma_0,\sigma_1]/(\sigma_1^2-\sigma_0^2)$ as a ring:
the proof is analogous to the computation of the 0th homology of $\Ak$ in the proof of Proposition \ref{prop Ak} because the extra cells that we have in $\mathbf{\Tilde{S}_{\kk}}$ are either in degree $\geq 2$ or in degree $1$ but attached trivially, so they have no effect in the homological degree $0$ part of the spectral sequence. 

The surjectivity in degree $1$ holds by construction if $\rk(x') \leq 1$. 
When $\rk(x')=2$ it holds by assumptions (i) and (ii) in the statement of the theorem plus the surjectivity in ranks $\leq 1$. 
\end{proof}

\textbf{Step 2.}
Now we will further reduce it to the case $\mathbf{S_{\FF}}$ for $\ell$ an odd prime or $0$. 

By proceeding as in Proposition \ref{prop Ak} we find that $\mathbf{S_{\kk}}= \mathbf{S}_{\mathbb{Z}[1/2]} \otimes_{\mathbb{Z}[1/2]} \kk$, so it suffices to consider the case $\kk=\mathbb{Z}[1/2]$ by the universal coefficients theorem. 

By reasoning as in Step 2 in the proof of Theorem \ref{theorem stab 1} we get that the homology groups of $\mathbf{S}_{\mathbb{Z}[1/2]}/ \sigma_{\epsilon}$ are finitely generated ${\mathbb{Z}[1/2]}$-modules because $\mathbf{S}_{\mathbb{Z}[1/2]}$ only has finitely many $E_2$-cells. 
Thus, another application of the universal coefficients theorem allows us to reduce to the case $\kk=\F$ with $\ell$ either an odd prime or $0$. 

\textbf{Step 3.}
Since we are working with $\F$-coefficients for a fixed $\ell$ we will drop the $\ell$ and $\F$ subscripts from now on. 
Let us begin by considering the cellular attachment filtration of $\mathbf{S}$, see \cite[Section 6.2.1]{Ek} for details, where the last grading denotes the filtration.
\begin{equation*}
    \begin{aligned}
        \mathbf{fS}:=\mathbf{E_2}(S^{(1,0),0,0} \sigma_0 \oplus S^{(1,1),0,0} \sigma_1 \oplus S^{(1,\epsilon),1,0} x \oplus S^{(1,1-\epsilon),1,0} y \oplus S^{(1,1-\epsilon),1,0} z) \\ \cup_{\sigma_1^2-\sigma_0^2}^{E_2}{D^{(2,0),1,1} \rho} \cup_{Q^1(\sigma_1)-\sigma_{\epsilon} \cdot x-t Q^1(\sigma_0)}^{E_2} {D^{(2,0),2,1} X} \cup_{[\sigma_0,\sigma_1]-\sigma_{\epsilon} \cdot y}^{E_2}{D^{(2,1),2,1} Y} \\ \cup_{\sigma_{1-\epsilon} \cdot Q^1(\sigma_0)- \sigma_{\epsilon}^2 \cdot z}^{E_2}{D^{(3,1-\epsilon),2,1} Z} \in \Alg_{E_2}((\sMod_{\F}^{\mathsf{H}})^{\mathbb{Z}_{\leq}})
    \end{aligned}
\end{equation*}

This gives two spectral sequences as in Step 3 of the proof of Theorem \ref{theorem stab 1}:
\begin{enumerate}[(i)]
    \item $F^1_{x,p,q}=H_{x,p+q,p}(\overline{\gr(\mathbf{fS})}) \Rightarrow H_{x,p+q}(\mathbf{\overline{S}})$
    \item $E^1_{x,p,q}=H_{x,p+q,p}(\overline{\gr(\mathbf{fS})}/\sigma_{\epsilon}) \Rightarrow H_{x,p+q}(\mathbf{\overline{S}}/\sigma_{\epsilon}).$
\end{enumerate}

The first spectral sequence is multiplicative, its first page is $\Lambda(L)$ where $L$ is the $\F$-vector space with basis $Q^I(u)$ such that $u$ a basic Lie word in $\{\sigma_0,\sigma_1,x,y,z,\rho,X,Y,Z\}$ and $I$ is admissible; and its $d^1$-differential satisfies $d^1(\sigma_0)=0$, $d^1(\sigma_1)=0$, $d^1(x)=0$, $d^1(y)=0$, $d^1(z)=0$, $d^1(\rho)=\sigma_1^2-\sigma_0^2$, $d^1(X)=Q^1(\sigma_1)-\sigma_{\epsilon} \cdot x-t Q^1(\sigma_0)$, $d^1(Y)=[\sigma_0,\sigma_1]-\sigma_{\epsilon} \cdot y$ and $d^1(Z)=\sigma_{1-\epsilon} \cdot Q^1(\sigma_0)-\sigma_{\epsilon}^2 \cdot z$. 

The second spectral sequence has the structure of a module over the first one, and its first page is $E^1= \Lambda(L/\F\{\sigma_{\epsilon}\})$, so $(E^1,d^1)$  has the structure of a CDGA. 

Thus, in order to finish the proof it suffices to show that $E^2_{x,p,q}=0$ for $p+q<(2 \rk(x)-3)/3$.

We will show the required vanishing line on $E^2$ by introducing a filtration on the CDGA $(E^1,d^1)$, similar to the one in Step 3 of the proof of Theorem \ref{theorem stab 1}. 
We let $\mathcal{F}^{\bullet} E^1$ be the filtration in which $\sigma_{1-\epsilon}$, $x$, $y$, $z$, $\rho$, $Q^1(\sigma_0)$, $Q^1(\sigma_1)$, $[\sigma_0,\sigma_1]$, $X$, $Y$, $Z$ are given filtration $0$, the remaining elements of a homogeneous basis of $L/\F\{\sigma_{\epsilon}\}$ extending these are given filtration equal to their homological degree, and we extend the filtration to $\Lambda(L/\F\{\sigma_{\epsilon}\})$ multiplicatively. 

This gives a spectral sequence converging to $E^2$ whose first page is the homology of the associated graded of the filtration $\mathcal{F}^{\bullet} E^1$. 
We will show the vanishing line on the first page of this spectral sequence. 

Applying \cite[Theorems 16.7 and 16.8]{Ek} gives that $d^1([\sigma_0,\sigma_1])=0$, $d^1(Q^1(\sigma_0))=0$ and $d^1(Q^1(\sigma_1))=0$. 
This allows to split the associated graded as a tensor product

\begin{equation*}
    \begin{aligned}
    (\gr(\mathcal{F}^{\bullet} E^1),D)= (\Lambda(\F\{\sigma_{1-\epsilon},\rho,Q^1(\sigma_0),Z,Q^1(\sigma_1),X\}),D)  \otimes_{\F} \\ (\Lambda(\F\{[\sigma_0,\sigma_1],Y\}),D)  \otimes_{\F} (\Lambda(\F\{x,y,z\}),0) \otimes_{\F} \\ (\Lambda(L/\F\{\sigma_0,\sigma_1,x,y,z,\rho,Q^1(\sigma_0),Q^1(\sigma_1),[\sigma_0,\sigma_1],X,Y,Z\}),0)
    \end{aligned}
\end{equation*}
where $D$ is the induced differential and satisfies $D(\sigma_{1-\epsilon})=0$, $D(\rho)=(-1)^{\epsilon} \sigma_{1-\epsilon}^2$, $D(Q^1(\sigma_0))=0$, $D(Z)=\sigma_{1-\epsilon} \cdot Q^1(\sigma_0)$, $D(Q^1(\sigma_1))=0$, $D(X)=Q^1(\sigma_1)-t Q^1(\sigma_0)$, $D([\sigma_0,\sigma_1])=0$, $D(Y)=[\sigma_0,\sigma_1]$. 
By the Künneth theorem it suffices to compute the homology of each of the factors separately. 

By direct computation we see that
\begin{enumerate}[$\bullet$]
    \item Elements in $\Lambda(L/\F\{\sigma_0,\sigma_1,x,y,z,\rho,Q^1(\sigma_0),Q^1(\sigma_1),[\sigma_0,\sigma_1],X,Y,Z\})$ have slope $\geq 2/3$.
    \item Elements in $\Lambda(\F\{x,y,z\})$ have slope $\geq 1 \geq 2/3$. 
    \item Since $\ell \neq 2$ we have $[\sigma_0,\sigma_1]^2=0$ so the homology of $(\Lambda(\F\{[\sigma_0,\sigma_1],Y\}),D)$ is $\F[Y^{\ell}]+ [\sigma_0,\sigma_1] \cdot \F\{Y^j: \ell | j+1\}$. 
    Since $[\sigma_0,\sigma_1]$ has bidegree $(\rk=2,d=1)$, $Y$ has bidegree $(\rk=2,d=2)$ and $\ell \geq 3$ then all these elements have slope $\geq 5/6 \geq 2/3$. 
\end{enumerate}
Thus, it suffices to check that $H_*(\Lambda(\F\{\sigma_{1-\epsilon},\rho,Q^1(\sigma_0),Z,Q^1(\sigma_1),X\}),D)$ vanishes for $3 d<2 \rk-3$, where $d$ denotes the homological degree. 
The remaining of the proof will be about studying this CDGA. 
We will separate this as an extra step because it will require some additional filtrations and work.

\textbf{Step 4.}
We firstly claim that it suffices to consider $t=0$: 
$\sigma_{1-\epsilon}$, $\rho$, $Q^1(\sigma_0)$, $Q^1(\sigma_1)$, $X$, $Z$ are now just the generators of a certain CDGA. 
Since both $Q^1(\sigma_0)$ and $Q^1(\sigma_1)$ lie in $\ker(D)$ and have the same homological degree and rank, the change of variables $Q^1(\sigma_1) \mapsto Q^1(\sigma_1)-t Q^1(\sigma_0)$ reparameterises $t \mapsto 0$. 

Secondly, once we are in the case $t=0$, we can further split the CDGA as a tensor product 
$$(\Lambda(\F\{\sigma_{1-\epsilon},\rho,Q^1(\sigma_0),Z\}),D) \otimes_{\F} (\Lambda(X,Q^1(\sigma_1)),D)$$
and the homology of the second factor is $\F[X^{\ell}]+ Q^1(\sigma_1) \cdot \F\{X^j: \ell | j+1\}$ (since $\ell \neq 2$), so all its elements have slope $\geq 5/6 \geq 2/3$. 
Thus, it suffices to prove that $H_*(\Lambda(\F\{\sigma_{1-\epsilon},\rho,Q^1(\sigma_0),Z\}),D)$ vanishes for $3 d<2 \rk-3$. 
For this, we will introduce an additional filtration by giving $Q^1(\sigma_0)$ filtration $0$, $\sigma_{1-\epsilon}$ filtration $1$ and $\rho, Z$ filtration $2$, and then extending the filtration multiplicatively to the whole CDGA. 

The differential $D$ preserves this filtration and the associated graded splits as a tensor product 
$$(\Lambda(\sigma_{1-\epsilon},\rho),D(\sigma_{1-\epsilon})=0, D(\rho)=(-1)^{\epsilon} \sigma_{1-\epsilon}^2) \otimes_{\F} (\Lambda(Q^1(\sigma_0),Z),0)$$
so, using that $\ell \neq 2$ to compute the homology of the first factor, we get a multiplicative spectral sequence of the form 
$$\mathcal{E}^1= \F[\sigma_{1-\epsilon}]/(\sigma_{1-\epsilon}^2) \otimes_{\F} \Lambda(Q^1(\sigma_0),Z) \Rightarrow H_{*}(\Lambda(\F\{\sigma_{1-\epsilon},\rho,Q^1(\sigma_0),Z\}),D)$$
whose first differential satisfies $D^1(Z)=\sigma_{1-\epsilon} \cdot Q^1(\sigma_0)$, $D^1(\sigma_{1-\epsilon})=0$ and $D^1(Q^1(\sigma_0))=0$. 

To finish the proof we will establish the required vanishing range on $\mathcal{E}^2$. 
To do so, we write $\mathcal{E}^1=\F\{1,\sigma_{1-\epsilon},Q^1(\sigma_0),\sigma_{1-\epsilon} \cdot Q^1(\sigma_0)\} \otimes \F[Z]$ as a $\F$-vector space, and then compute $\ker(D^1),\im(D^1)$ explicitly as $\F$-vector spaces, where $()$ denotes the ideal generated by an element:
$$\ker(D^1)=(\sigma_{1-\epsilon})+(Q^1(\sigma_0))+\F[Z^{\ell}]$$
and
$$\im(D^1)=\sigma_{1-\epsilon} \cdot Q^1(\sigma_0) \cdot \F\{Z^i: \ell \nmid i+1\}.$$
Thus, we get that $\mathcal{E}^2=\ker(D^1)/\im(D^1)$ is, as a $\F$-vector space, given by 
$$\mathcal{E}^2=\F[Z^l]+\sigma_{1-\epsilon} \cdot \F[Z]+ Q^1(\sigma_0) \cdot \F[Z]+ \sigma_{1-\epsilon} \cdot Q^1(\sigma_0) \cdot \F\{Z^i: \ell \mid i+1\}.$$
Using the bidegrees of the generators we find that the first summand vanishes for $d<2\rk/3$, the second vanishes for $d<2(\rk-1)/3$, the third one for $d<(2\rk-1)/3$, and the forth one for $d <(2 \rk-3)/3$, as required. 
\end{proof}

\subsection{Construction of the class $\theta$} \label{construction theta}
In this section we will explain how the class $\theta \in H_{(4,0),2}(\X)$ of Theorem \ref{thm stab 3} is defined. 

The first step will be to define $\theta \in H_{(4,0),2}(\AFF)$. 
Since we will only work with $\FF$-coefficients for now, we will drop all the $\FF$-indices.
Consider the spectral sequence (i) of the proof of Theorem \ref{theorem stab 1}: 
$$F^1_{x,p,q}=H_{x,p+q,p}(\overline{\mathbf{E_2}(S^{(1,0),0,0} \sigma_0 \oplus S^{(1,1),0,0} \sigma_1 \oplus S^{(2,0),1,1} \rho)}) \Rightarrow H_{x,p+q}(\mathbf{\overline{A}}).$$
As we said, this is a multiplicative spectral sequence whose first page is given by $\FF[L]$, where $L$ is the $\FF$-vector space with basis $Q^I(y)$ such that $y$ is a basic Lie word in $\{\sigma_0,\sigma_1,\rho\}$ and $I$ is admissible. 
(Note that this time we get a free commutative algebra instead of graded-commutative as we work with $\FF$-coefficients.)
Thus we have $F^1_{(4,0),2,0}=\FF\{\rho^2\}$.

\begin{claim}
$\rho^2$ survives to $F^{\infty}$. 
\end{claim}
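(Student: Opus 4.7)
The plan is to verify that every differential $d^r$ with $r \geq 1$ annihilates $\rho^2 \in F^1_{(4,0),2,0}$. Note that $\rho^2$ sits in filtration $p = 2$ and total homological degree $p+q = 2$, so $d^r(\rho^2)$ would land in filtration $2-r$ and homological degree $1$.

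Since all cells of $\mathbf{fA}$ have nonnegative filtration, the filtration on the spectral sequence is bounded below by $0$, and hence $d^r(\rho^2) = 0$ automatically for $r \geq 3$ (the target sits in negative filtration). For $r = 1$ the spectral sequence is multiplicative, so the Leibniz rule gives
$$d^1(\rho^2) = 2\rho \cdot d^1(\rho) = 2\rho \cdot (\sigma_1^2 - \sigma_0^2),$$
which vanishes because we are working over $\FF$.

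The crucial case is $d^2$, whose target $F^2_{(4,0),0,1}$ is a subquotient of the filtration-$0$, rank-$(4,0)$, homological-degree-$1$ piece of $F^1 = \FF[L]$. This piece is spanned by $\sigma_i^2 Q^1(\sigma_j)$ for $i,j \in \{0,1\}$ together with $\sigma_0\sigma_1 \cdot [\sigma_0,\sigma_1]$, modulo the $d^1$-image from filtration $1$ (which already contains $(\sigma_0^2+\sigma_1^2) Q^1(\sigma_i) = d^1(\rho \cdot Q^1(\sigma_i))$ and similar elements, because $d^1$ satisfies Leibniz and $d^1(\rho) = \sigma_1^2-\sigma_0^2$). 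To show $d^2(\rho^2) = 0$, I would use the characteristic-$2$ identity $\rho^2 = Q^1(\rho)$, valid since $|\rho|=1$, together with a Kudo-type naturality argument: the differential $d^2$ applied to the $E_2$-operation $Q^1(\rho)$ is controlled by $Q^1$ applied to the transgressed element $d^1(\rho) = \sigma_1^2-\sigma_0^2$, and inspection shows this output already lies in the $d^1$-image from filtration $1$, hence vanishes on $F^2$.

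The main obstacle is making this last step precise: the purely multiplicative structure is enough for $d^1$, but for $d^2$ one has to exploit the full $E_2$-algebra structure of the cell-attachment spectral sequence in the form of a Kudo/Nishida-style relation. An elementary, more calculational alternative would be to enumerate a basis of $F^2_{(4,0),0,1}$ explicitly (using the relations coming from $d^1$ on filtration-$1$ elements $\rho \cdot Q^1(\sigma_i)$) and verify directly that the bidegree and multiplicative constraints force $d^2(\rho^2)$ into the $d^1$-image.
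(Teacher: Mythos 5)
Your outline follows the same route as the paper: the cases $r\geq 3$ vanish for filtration reasons, $r=1$ is handled by the Leibniz rule over $\FF$, and $r=2$ is handled via a Kudo-type transgression applied to $\rho^2=Q^1(\rho)$. However, as written the argument has two genuine gaps.

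First, you only verify that $\rho^2$ is a permanent cycle, i.e.\ that $d^r(\rho^2)=0$ for all $r$. To conclude that $[\rho^2]$ is a \emph{nonzero} class in $F^\infty_{(4,0),2,0}$ -- which is what the claim asserts, and what is needed so that the class $\theta$ of Definition \ref{def theta} and Step 2 of the proof of Theorem \ref{thm stab 3} has filtration exactly $2$ -- you must also check that $\rho^2$ is not a boundary. The paper does this in one line by observing $F^1_{(4,0),2+r,1-r}=0$ for $r\geq 1$: any monomial in $\FF[L]$ of filtration $\geq 3$ must involve $\rho$ (the only positive-filtration generator, of rank $(2,0)$) with weight at least three, forcing rank at least $(6,0)$. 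This needs to be included.

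Second, your treatment of $d^2$ is not complete, as you acknowledge. The needed statement is exactly \cite[Theorem 16.8 (i)]{Ek} (valid for $E_2$-algebras as remarked on \cite[Page 184]{Ek}): $d^2(Q^1(\rho))$ is represented by $Q^1(d^1\rho)=Q^1(\sigma_1^2-\sigma_0^2)$. The paper then shows this class vanishes outright -- not merely that it ``lies in $\im(d^1)$'', which you assert but do not verify -- using the additivity and Cartan formulae for $Q^1$ in \cite[Section 16.2.2]{Ek} together with $[\sigma_i,\sigma_i]=-2Q^1(\sigma_i)=0$ over $\FF$. Moreover, the ``elementary alternative'' you suggest cannot replace this: $\rho$ is not a $d^1$-cycle, so $[\rho^2]\in F^2$ is not the square of any class in $F^2$ and the Leibniz rule for $d^2$ gives no information about it; and $F^2_{(4,0),0,1}$ is nonzero, so no bidegree constraint forces $d^2(\rho^2)=0$ either. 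The Kudo relation really is the essential tool here, but it is available and closes the argument.
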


\begin{proof}
Since $F^1_{(4,0),2+r,1-r}=0$ for $r \geq 1$ then $\rho^2$ cannot be a boundary of any $d^r$-differential. 
Moreover, $d^r: F^r_{(4,0),2,0} \rightarrow F^r_{(4,0),2-r,r-1}$ vanishes for $r>2$ since $\mathbf{fA}$ vanishes on negative filtration. 
Thus, it suffices to show that both $d^1(\rho^2)$ and $d^2(\rho^2)$ vanish. 
By Leibniz rule we have $d^1(\rho^2)=0$, so we only need to show that $d^2(\rho^2)=0$. 

Since $\rho^2=Q^1(\rho)$ and $d^1(\rho)=\sigma_1^2-\sigma_0^2$ then \cite[Theorem 16.8 (i)]{Ek} gives that $d^2(\rho^2)$ is represented by $Q^1(\sigma_1^2-\sigma_0^2)$.
(As a technical note let us mention that the result we just quoted is stated for $E_{\infty}$-algebras, but the same result holds for $E_2$-algebras as explained in \cite[Page 184]{Ek}.)
Finally, $Q^1(\sigma_1^2-\sigma_0^2)$ vanishes by the properties of $Q^1$ shown in \cite[Section 16.2.2]{Ek}. 
\end{proof}

\begin{definition}\label{def theta}
The class $\theta \in H_{(4,0),2}(\AFF)$ is defined to be any lift of the class $[\rho^2] \in F^{\infty}_{(4,0),2,0}$.

Given $\X$ satisfying the assumptions of Theorem \ref{thm stab 3} we define $\theta \in H_{(4,0),2}(\X)$ as follows: we pick an $E_2$-map $\mathbf{A} \xrightarrow{c} \mathbf{X}$ as in Step 1 in the proof of Theorem \ref{theorem stab 1}, and set $\theta:= c_*(\theta) \in H_{(4,0),2}(\mathbf{X})$. 
\end{definition}

\begin{rem} \label{rem indeterminacy}
There is not a unique choice of class $\theta$, however the statement of Theorem \ref{thm stab 3} will be true for \textit{any choice} of class $\theta$ with the property of Definition \ref{def theta}. 
In fact, $\theta$ is well-defined up to adding any linear combination of $Q^1(\sigma_0)^2$, $Q^1(\sigma_0) \cdot Q^1(\sigma_1)$, $Q^1(\sigma_1)^2$ and $[\sigma_0,\sigma_1]^2$, or multiples of $\sigma_0^2=\sigma_1^2$ to it. 
(This fact will not be needed in the rest of the paper but we added an explanation below.)
\end{rem}

In order to show the above remark one can use the same spectral sequence and check that 
\begin{enumerate}
    \item $F^1_{(4,0),0,2}$ is generated by $Q^1(\sigma_0)^2$, $Q^1(\sigma_0) \cdot Q^1(\sigma_1)$, $Q^1(\sigma_1)^2$ and $[\sigma_0,\sigma_1]^2$, and all these terms are permanent cycles and no boundaries. 
    \item $d^1: F^1_{(4,0),1,1} \rightarrow F^1_{(4,0),0,1}$ is injective, and hence $F^2_{(4,0),1,1}=0$. 
\end{enumerate}
Thus, $\theta \in H_{(4,0),2}(\AFF)$ is well-defined up to a linear combination of $Q^1(\sigma_0)^2$, $Q^1(\sigma_0) \cdot Q^1(\sigma_1)$, $Q^1(\sigma_1)^2$ and $[\sigma_0,\sigma_1]^2$. 

The definition of the map $c$ is not unique as we need to choose a nullhomotopy of $\sigma_1^2-\sigma_0^2$ in $\X$, and the set of such choices is a $H_{(2,0),1}(\X)$-torsor.
In particular, by assumptions (i) and (ii) about $\X$ any new choice of $\rho$ differs by a class in $\im(\sigma_{\epsilon} \cdot -)$ or by a multiple of $Q^1(\sigma_0)$, giving the result. 
\subsection{The proof of Theorem \ref{thm stab 3}} \label{section proof of thm stab 3}

Before proving the theorem let us briefly recall the construction of $\X/(\sigma_{\epsilon},\theta)$. 
We start by viewing $\theta$ as a homotopy class of maps $S^{(4,0),2} \rightarrow \mathbf{X}$. 
Then, using the adapters construction, see \cite[Section 12.3]{Ek} we get an $\mathbf{\overline{X}}$-module map $S^{(4,0),2} \otimes \mathbf{\overline{X}}/\sigma_{\epsilon} \xrightarrow{\theta \cdot-} \mathbf{\overline{X}}/\sigma_{\epsilon}$ and we define $\mathbf{\overline{X}}/(\sigma_{\epsilon},\theta)$ to be its cofibre (in the category of left $\mathbf{\overline{X}}$-modules). 

\begin{proof}
The proof will be very similar to that of Theorem \ref{theorem stab 2}, so we will focus on the parts that are different and skip details. 

\textbf{Step 1.}
We will construct a certain cellular $E_2$-algebra $\mathbf{S}$ and show that it suffices to prove that $H_{x,d}(\mathbf{\overline{S}}/(\sigma_{\epsilon},\theta))=0$ for $3d < 2 \rk(x)-4$. 

The assumptions of the statement imply that $[\sigma_0,\sigma_1]= \sigma_{\epsilon} \cdot y$ for some $y \in H_{(1,1-\epsilon),1}(\X)$, that $Q^1(\sigma_1)= \sigma_{\epsilon} \cdot x + t Q^1(\sigma_0)$ for some $x \in H_{(1,\epsilon),1}(\X)$ and some $t \in \FF$, and that $\sigma_{1-\epsilon} \cdot Q^1(\sigma_0)= \sigma_{\epsilon}^2 \cdot z  \in H_{(3,1-\epsilon),1}(\X)$ for some $z \in H_{(1,1-\epsilon),1}(\X)$.

Moreover, we claim that there is $u \in H_{(4,0),3}(\X)$ such that $Q^1(\sigma_0)^3=\sigma_{\epsilon}^2 \cdot u$.

Indeed, condition (iv) says that $\sigma_0 \cdot Q^1(\sigma_0)= \sigma_{\epsilon}^2 \cdot \tau$ for some $\tau \in H_{(1,0),1}(\X)$, and then we can apply $Q^2(-)$ to both sides and use the formulae in \cite[Section 16.2.2]{Ek} to find $Q^1(\sigma_0)^3+ \sigma_0^2 \cdot Q^2(Q^1(\sigma_0))+\sigma_0[\sigma_0,Q^1(\sigma_0)] Q^1(\sigma_0)= \sigma_{\epsilon}^2 \cdot [\sigma_{\epsilon},\sigma_{\epsilon}] \cdot Q^1(\tau)+\sigma_{\epsilon}^4 \cdot Q^2(\tau)+\sigma_{\epsilon}^2\cdot [\sigma_{\epsilon}^2,\tau] \cdot \tau$, hence the result as $\sigma_{\epsilon}^2=\sigma_0^2$ and as $[\sigma_0,Q^1(\sigma_0)]=[\sigma_0,[\sigma_0,\sigma_0]]=0$ (by \cite[Section 16.2.2]{Ek} again).

Let 
\begin{equation*}
    \begin{aligned}
        \mathbf{S}:=\mathbf{E_2}(S^{(1,0),0} \sigma_0 \oplus S^{(1,1),0} \sigma_1 \oplus S^{(1,\epsilon),1} x \oplus S^{(1,1-\epsilon),1} y \oplus S^{(1,1-\epsilon),1}z \oplus S^{(4,0),3}u) \\\cup_{\sigma_1^2-\sigma_0^2}^{E_2}{D^{(2,0),1} \rho}  \cup_{Q^1(\sigma_1)-\sigma_{\epsilon} \cdot x-t Q^1(\sigma_0)}^{E_2} {D^{(2,0),2} X} \cup_{[\sigma_0,\sigma_1]-\sigma_{\epsilon} \cdot y}^{E_2}{D^{(2,1),2} Y} \\ \cup_{\sigma_{1-\epsilon} \cdot Q^1(\sigma_0)- \sigma_{\epsilon}^2 \cdot z}^{E_2}{D^{(3,1-\epsilon),2} Z} \cup_{Q^1(\sigma_0)^3-\sigma_{\epsilon}^2 \cdot u}^{E_2}{D^{(6,0),4} U} \in \Alg_{E_2}(\sMod_{\FF}^{\mathsf{H}})
    \end{aligned}
\end{equation*}

By proceeding as in Step 1 of the proof of Theorem \ref{theorem stab 1} there is an $E_2$-algebra map $f: \mathbf{S} \rightarrow \X$ sending each of $\sigma_0,\sigma_1, x,y,z,u$ to the corresponding homology classes in $\X$ with the same name. 
Moreover, we can assume that $f$ extends any given map $\mathbf{A} \rightarrow \X$ and hence that it sends $\theta \mapsto \theta$. 

\begin{claim}
$H_{x,d}^{E_2}(\X,\mathbf{S})=0$ for $d<2 \rk(x)/3$.     
\end{claim}

The proof is identical to the corresponding claim in Step 1 in the proof of Theorem \ref{theorem stab 2}. 
The only difference now is that $\mathbf{S}$ has a cell $U$ below the ``critical line'' $d=\rk-1$. 
However, it causes no trouble since it has bidegree $(\rk=6,d=4)$, so it lies on the region $3d \geq 2 \rk$. 

Assuming the claim, we can apply \cite[Corollary 15.10]{Ek} with $\rho(x)= 2\rk(x)/3$, $\mu(x)=(2\rk(x)-4)/3$ and $\mathbf{M}=\mathbf{\overline{S}}/(\sigma_{\epsilon},\theta)$ to obtain the required reduction. 

\textbf{Setp 2.}
We proceed as in Step 3 in the proof of Theorem \ref{theorem stab 2} to get a cell attachment filtration $\mathbf{fS} \in \Alg_{E_2}((\sMod_{\FF}^{\mathsf{H}})^{\mathbb{Z}_{\leq}})$. 
The key now is to observe that $\theta \in H_{(4,0),2}(\mathbf{S})$ lifts to a filtered map $\theta: S^{(4,0),2,2} \rightarrow \mathbf{fS}$ which maps to $\rho^2 \in H_{*,*,*}(\gr(\mathbf{fS}))$. 

Indeed, $\theta \in H_{(4,0),2}(\mathbf{A})=H_{(4,0),2}(\colim(\mathbf{fA}))=\colim_f(H_{(4,0),2,f}(\mathbf{fA}))$, so it can be represented by a class $\theta \in H_{(4,0),2,f}(\mathbf{fA})$ for some $f$ large. 
In fact, $f=2$ is  the smallest possible such value since the obstruction to lift the class $\theta \in H_{(4,0),2,f}(\mathbf{fA})$ to a class in $ H_{(4,0),2,f-1}(\mathbf{fA})$ is precisely the image of $\theta$ in $H_{(4,0),2,f}(\gr(\mathbf{fA}))$ which is $\rho^2$ by definition, giving the result. 
Finally observe that there is a canonical map of filtered $E_2$-algebras $\mathbf{fA} \rightarrow \mathbf{fS}$. 
Thus, we get spectral sequences
\begin{enumerate}[(i)]
    \item $F^1_{x,p,q}=H_{x,p+q,p}(\overline{\gr(\mathbf{fS})}) \Rightarrow H_{x,p+q}(\mathbf{\overline{S}})$
    \item $E^1_{x,p,q}=H_{x,p+q,p}(\overline{\gr(\mathbf{fS})}/(\sigma_{\epsilon},\rho^2)) \Rightarrow H_{x,p+q}(\mathbf{\overline{S}}/(\sigma_{\epsilon},\theta)).$
\end{enumerate}

The first spectral sequence is multiplicative, its first page is $\FF[L]$ where $L$ is the $\FF$-vector space with basis $Q^I(\alpha)$ such that $\alpha$ a basic Lie word in $\{\sigma_0,\sigma_1,x,y,z,u,\rho,X,Y,Z,U\}$ and $I$ is admissible; and its $d^1$-differential satisfies $d^1(\sigma_0)=0$, $d^1(\sigma_1)=0$, $d^1(x)=0$, $d^1(y)=0$, $d^1(z)=0$, $d^1(u)=0$, $d^1(\rho)=\sigma_1^2-\sigma_0^2$, $d^1(X)=Q^1(\sigma_1)-\sigma_{\epsilon} \cdot x-t Q^1(\sigma_0)$, $d^1(Y)=[\sigma_0,\sigma_1]-\sigma_{\epsilon} \cdot y$, $d^1(Z)=\sigma_{1-\epsilon} \cdot Q^1(\sigma_0)-\sigma_{\epsilon}^2 \cdot z$ and $d^1(U)=Q^1(\sigma_0)^3-\sigma_{\epsilon}^2 \cdot u$. 

The second spectral sequence has the structure of a module over the first one, and its first page is given by $E^1= \FF[L/\FF\{\sigma_{\epsilon}\}]/(\rho^2)$ because $\sigma_{\epsilon} \cdot -$ is injective on $\FF[L]$ and $\rho^2 \cdot -$ is injective on $\FF[L]/(\sigma_{\epsilon})=\FF[L/\FF\{\sigma_{\epsilon}\}]$. 
Thus $(E^1,d^1)$  has the structure of a CDGA. 

Thus, in order to finish the proof it suffices to show that $E^2_{x,p,q}=0$ for $p+q<(2 \rk(x)-4)/3$.

\textbf{Step 3.}
Now we will introduce additional filtrations to simplify the CDGA until we get the required result. 
The first filtration is similar to the one of Step 3 in the proof of Theorem \ref{theorem stab 2}: we give $\sigma_{1-\epsilon}$, $x$, $y$, $z$, $u$, $\rho$, $Q^1(\sigma_0)$, $Q^1(\sigma_1)$, $[\sigma_0,\sigma_1]$, $X$, $Y$, $Z$, $U$ filtration $0$, we give the remaining elements of a homogeneous basis of $L/\FF\{\sigma_{\epsilon}\}$ extending these filtration equal to their homological degree, and we extend the filtration to $\FF(L/\FF\{\sigma_{\epsilon}\})/(\rho^2)$ multiplicatively (which we can as $\rho$ is in filtration $0$). 

This allows us to split the associated graded as a tensor product and all the factors are concentrated in the region $3d \geq 2 \rk$ except possibly the one given by 
$$(\FF[\sigma_{1-\epsilon},\rho,Q^1(\sigma_0),Q^1(\sigma_1),X,Z,U]/(\rho^2),D)$$
where the non-zero part of $D$ is characterized by $D(X)=Q^1(\sigma_1)-tQ^1(\sigma_0)$, $D(Z)=\sigma_{1-\epsilon} \cdot Q^1(\sigma_0)$ and $D(U)=Q^1(\sigma_0)^3$. 
(This computation is easier than the one of the proof of Theorem \ref{theorem stab 2} since $\ell=2$ simplifies the homology of the other factors.)

Then, we can proceed as in Step 4 in the proof of Theorem \ref{theorem stab 2} to go to the case $t=0$ and hence split the CDGA further to simplify it to 
$$(\FF[\sigma_{1-\epsilon},\rho,Q^1(\sigma_0),Z,U]/(\rho^2),D).$$
Next we introduce a new filtration by giving $\sigma_{1-\epsilon}, \rho, Q^1(\sigma_0)$ filtration $0$, and $Z,U$ filtration $1$ and then extending multiplicatively. 
The associated graded of this splits as a tensor product
$$(\FF[\sigma_{1-\epsilon},\rho]/(\rho^2), D(\rho)=\sigma_{1-\epsilon}^2) \otimes_{\FF} (\FF[Q^1(\sigma_0),Z,U],0)$$
and the homology of the first factor is precisely $\FF[\sigma_{1-\epsilon}]/(\sigma_{1-\epsilon}^2)$, yielding a spectral sequence of the form 
\[\Scale[0.9]{\mathcal{E}^1= \FF[\sigma_{1-\epsilon}]/(\sigma_{1-\epsilon}^2) \otimes_{\FF} \FF[Q^1(\sigma_0),Z,U] \Rightarrow H_*(\FF[\sigma_{1-\epsilon},\rho,Q^1(\sigma_0),Z,U]/(\rho^2),D)}\]
whose first differential $D^1$ satisfies $D^1(Z)= \sigma_{1-\epsilon} \cdot Q^1(\sigma_0)$ and $D^1(U)=Q^1(\sigma_0)^3$. 
We will establish the required vanishing line on $\mathcal{E}^2$ of this spectral sequence. 
For that we will introduce yet another filtration by letting $\sigma_{1-\epsilon}, Q^1(\sigma_0), U$ have filtration $0$ and $Z$ have filtration $1$. 

The associated graded is given by
$$(\FF[\sigma_{1-\epsilon},Z]/(\sigma_{1-\epsilon}^2),0) \otimes_{\FF} (\FF[Q^1(\sigma_0),U],\delta(U)=Q^1(\sigma_0)^3)$$
where $\delta$ is the new differential. 
Thus, its homology is given by 
$$\FF[\sigma_{1-\epsilon}, Q^1(\sigma_0), Z,U^2]/(\sigma_{1-\epsilon}^2,Q^1(\sigma_0)^3)$$
and the $\delta^1$-differential satisfies $\delta^1(Z)= \sigma_{1-\epsilon} \cdot Q^1(\sigma_0)$. 
Since $U$ has slope $2/3$ itself, in order to prove the required vanishing line we can just focus on the remaining part 
$$(\FF[\sigma_{1-\epsilon}, Q^1(\sigma_0), Z]/(\sigma_{1-\epsilon}^2,Q^1(\sigma_0)^3),\delta^1(Z)=\sigma_{1-\epsilon} \cdot Q^1(\sigma_0)).$$
For that we explicitly compute $\ker(\delta^1)$, $\im(\delta^1)$ as $\FF$-vector spaces (similar to the last CDGA of the proof of Theorem \ref{theorem stab 2}). 
$$\ker(\delta^1)= (\sigma_{1-\epsilon})+(Q^1(\sigma_0)^2)+\FF\{1,Q^1(\sigma_0)\} \cdot \FF[Z^2]$$
and
$$\im(\delta^1)=\FF\{\sigma_{1-\epsilon} \cdot Q^1(\sigma_0), \sigma_{1-\epsilon} \cdot Q^1(\sigma_0)^2\} \cdot \FF[Z^2].$$
Thus we get 
\begin{equation*}
    \begin{aligned}
        \ker(\delta^1)/\im(\delta^1)= \sigma_{1-\epsilon} \cdot \FF[Z]+ \sigma_{1-\epsilon} \cdot Q^1(\sigma_0) \cdot \FF\{Z^i: 2 \nmid i\}+ \\
        \sigma_{1-\epsilon} \cdot Q^1(\sigma_0)^2 \cdot \FF\{Z^i: 2 \nmid i\}+\FF\{1,Q^1(\sigma_0),Q(\sigma_0)^2\} \cdot \FF[Z].
    \end{aligned}
\end{equation*}

Using the bidegrees of the generators it is immediate that all vanish for $3d<2 \rk -4$, hence the result. 
\end{proof}

Finally we will finish the Section by giving the Corollary of of Theorem \ref{thm stab 3} which is used in Section \ref{section intro}. 

\begin{corollary} \label{cor 2 torsion}
Let $\X$ be as in Theorem \ref{thm stab 3} then 
\begin{enumerate}[(i)]
    \item If $\theta^3 \in H_{(12,0),6}(\X)$ does not destabilize by $\sigma_{\epsilon}$ then $H_{(4k,0),2k}(\mathbf{\overline{X}}/\sigma_{\epsilon}) \neq 0$ for all $k \geq 1$, and in particular the optimal slope for the stability is $1/2$. 
    \item If $\theta^3 \in H_{(12,0),6}(\X)$ destabilizes by either $\sigma_0$ or $\sigma_1$ then $H_{x,d}(\mathbf{\overline{X}}/\sigma_{\epsilon})=0$ for $3d \leq 2\rk(x)-6$, so $\X$ satisfies homological stability of slope at least $2/3$ with respect to $\sigma_{\epsilon}$. 
\end{enumerate}
\end{corollary}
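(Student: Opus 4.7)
The plan is to derive all three parts from the long exact sequence in homology associated to the cofibre sequence of $\mathbf{\overline{X}}$-modules
\[
S^{(4,0),2} \otimes \mathbf{\overline{X}}/\sigma_{\epsilon} \xrightarrow{\theta \cdot -} \mathbf{\overline{X}}/\sigma_{\epsilon} \longrightarrow \mathbf{\overline{X}}/(\sigma_{\epsilon},\theta),
\]
combined with the vanishing $H_{x,d}(\mathbf{\overline{X}}/(\sigma_{\epsilon},\theta)) = 0$ for $3d \leq 2\rk(x) - 3$ from Theorem \ref{thm stab 3} as essentially the only input. The unifying observation is that ``$\theta^n$ destabilizes by $\sigma_{\epsilon}$'' is equivalent to ``$\theta^n \cdot -$ acts as zero on $H_*(\mathbf{\overline{X}}/\sigma_{\epsilon})$'': if $\theta^n = \sigma_{\epsilon} \cdot \alpha$ in $H_*(\mathbf{\overline{X}})$, then for any $\beta \in H_*(\mathbf{\overline{X}}/\sigma_{\epsilon})$ we have $\theta^n \cdot \beta = \sigma_{\epsilon} \cdot (\alpha \cdot \beta) = 0$ in the cofibre.

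Parts (ii) and (iii) will follow fairly directly from this setup. For (iii), the hypothesis makes $\theta \cdot -$ the zero map on $H_*(\mathbf{\overline{X}}/\sigma_{\epsilon})$, so the LES gives an injection $H_{x,d}(\mathbf{\overline{X}}/\sigma_{\epsilon}) \hookrightarrow H_{x,d}(\mathbf{\overline{X}}/(\sigma_{\epsilon},\theta))$, and Theorem \ref{thm stab 3} concludes. For (ii), I would extend the LES one term further; a short bidegree check confirms that under the bound $3d \leq 2\rk(x) - 5$ both $H_{x,d}(\mathbf{\overline{X}}/(\sigma_{\epsilon},\theta))$ and $H_{x-(4,0),d-2}(\mathbf{\overline{X}}/(\sigma_{\epsilon},\theta))$ vanish, so $\theta \cdot -$ is surjective at two consecutive places. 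Every class in $H_{x,d}(\mathbf{\overline{X}}/\sigma_{\epsilon})$ is therefore in the image of $\theta^2 \cdot -$, which vanishes by the preliminary observation, giving the result.

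Part (i) is the part I expect to require the most care. The natural candidate for a non-zero class in $H_{(4k,0),2k}(\mathbf{\overline{X}}/\sigma_{\epsilon})$ is the image $[\theta^k]$ of $\theta^k$, and the target is to show $[\theta^k] \neq 0$ for all $k \geq 1$. For $k=2$ this is exactly the hypothesis; for $k=1$, if $[\theta] = 0$ then $\theta = \sigma_{\epsilon} \cdot \alpha$ and hence $\theta^2 = \sigma_{\epsilon} \cdot \alpha \cdot \theta$ would destabilize, a contradiction. For $k \geq 3$ I would induct: the LES shows that $\theta \cdot - : H_{(4(k-1),0),2(k-1)}(\mathbf{\overline{X}}/\sigma_{\epsilon}) \to H_{(4k,0),2k}(\mathbf{\overline{X}}/\sigma_{\epsilon})$ is injective provided $H_{(4k,0),2k+1}(\mathbf{\overline{X}}/(\sigma_{\epsilon},\theta)) = 0$, and this holds by Theorem \ref{thm stab 3} precisely when $6k + 3 \leq 8k - 3$, i.e.\ when $k \geq 3$, so the inductive step goes through and $[\theta^k] = \theta \cdot [\theta^{k-1}] \neq 0$. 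This bidegree matching is the main obstacle, and also pinpoints why $k=1,2$ must be treated as separate base cases. Since the resulting non-zero classes lie exactly on the line $d = \rk(x)/2$, no stability slope strictly better than $1/2$ is possible, which combined with Theorem \ref{theorem stab 1}(i) gives the claimed optimality.
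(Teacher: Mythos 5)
Your proof is correct and takes essentially the same approach as the paper: both derive all three parts from the long exact sequences associated to the cofibrations $S^{(1,\epsilon),0}\otimes\mathbf{\overline{X}}\to\mathbf{\overline{X}}\to\mathbf{\overline{X}}/\sigma_{\epsilon}$ and $S^{(4,0),2}\otimes\mathbf{\overline{X}}/\sigma_{\epsilon}\xrightarrow{\theta\cdot-}\mathbf{\overline{X}}/\sigma_{\epsilon}\to\mathbf{\overline{X}}/(\sigma_{\epsilon},\theta)$, together with the vanishing range from Theorem \ref{thm stab 3}; the bidegree checks match, and the small phrasing differences (using an injection into the double cofibre for (iii) rather than surjectivity of $\theta\cdot-$ composed with its vanishing, using injectivity rather than the full isomorphism in the induction for (i), and your separate treatment of the $k=1$ case which the paper leaves implicit) are cosmetic.
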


\begin{proof}
By definition (using the adapters construction as in Section \ref{section proof of thm stab 3}) there is a cofibration of left $\mathbf{\overline{X}}$-modules
$$S^{(4,0),2} \otimes \mathbf{\overline{X}}/\sigma_{\epsilon} \xrightarrow{\theta \cdot -} \mathbf{\overline{X}}/\sigma_{\epsilon} \rightarrow \mathbf{\overline{X}}/(\sigma_{\epsilon},\theta),$$
and hence a corresponding long exact sequence in homology groups which implies that 
$\theta \cdot -: H_{x-(4,0),d-2}(\mathbf{\overline{X}}/\sigma_{\epsilon}) \rightarrow H_{x,d}(\mathbf{\overline{X}}/\sigma_{\epsilon})$ is surjective for $3d \leq 2 \rk(x)-5$
and an isomorphism for $3d \leq 2 \rk(x)-8$. 

Similarly, the cofibration of left $\mathbf{\overline{X}}$-modules
$S^{(1,\epsilon),0} \otimes \mathbf{\overline{X}} \xrightarrow{\sigma_{\epsilon} \cdot -} \mathbf{\overline{X}}\rightarrow \mathbf{\overline{X}}/\sigma_{\epsilon}$ gives another long exact sequence in homology groups.

\textbf{Proof of (i).} 
If $\theta^3$ does not destabilise by $\sigma_{\epsilon}$ then the second long exact sequence gives $\theta^3 \neq 0 \in H_{(12,0),6}(\mathbf{\overline{X}}/\sigma_{\epsilon})$. 
But 
$$\theta \cdot -: H_{4(k-1),2(k-1)}(\mathbf{\overline{X}}/\sigma_{\epsilon}) \rightarrow H_{4k,2k}(\mathbf{\overline{X}}/\sigma_{\epsilon})$$ 
is an isomorphism for $k \geq 4$, so $\theta^k \neq 0 \in H_{4k,2k}(\mathbf{\overline{X}}/\sigma_{\epsilon})$ for $k \geq 4$ (hence for $k \geq 1$).  

\textbf{Proof of (ii).}
If $3d \leq 2 \rk(x)-6$ then $3(d+2k) \leq 2 (\rk(x)+4k)-8$ for any $k \geq 1$ and hence the map $\theta^k \cdot-: H_{x,d}(\mathbf{\overline{X}}/\sigma_{\epsilon}) \rightarrow H_{x+(4k,0),d+2k}(\mathbf{\overline{X}}/\sigma_{\epsilon})$ is an isomorphism for any $k \geq 1$. 
Thus, it suffices to find some $k$ for which $\theta^k \cdot -$ is the $0$ map. 
We will show that in fact $k=6$ works. 

Since $\theta^3$ destabilises by either $\sigma_0$ or $\sigma_1$ then (using that $\sigma_0^2=\sigma_1^2$) $\theta^6= \alpha \cdot \sigma_{\epsilon}^2$ for some $\alpha \in H_{(22,0),12}(\mathbf{X})$. 
Thus, $\theta^6 \cdot -= \alpha \cdot (\sigma_{\epsilon}^2 \cdot -)$ as (homotopy classes of) maps $S^{(24,0),12} \otimes \mathbf{\overline{X}}/\sigma_{\epsilon} \rightarrow \mathbf{\overline{X}}/\sigma_{\epsilon}$. 
Thus, it suffices to show that $\sigma_{\epsilon}^2 \cdot -: S^{(2,0),0} \otimes \mathbf{\overline{X}}/\sigma_{\epsilon} \rightarrow \mathbf{\overline{X}}/\sigma_{\epsilon}$ is nullhomotopic. 
This is a special case of the following general fact, see \cite[Proposition 2.3]{maxime}, that if $\X$ is an object in a stable $\infty$-category ($\sMod_{\FF}^{\mathsf{H}}$ in our case) and $f: \X \rightarrow \X$ is a self-map then (any) induced morphism $\overline{f}: \X/f \rightarrow \X/f$ on the cofibre satisfies that $\overline{f}^2$ is nullhomotopic. 
\end{proof}

\begin{rem} \label{rem theta well-defined}
By Remark \ref{rem indeterminacy} we know that $\theta$ itself is not well-defined. 
However, the map $\theta \cdot -: H_{x-(4,0),d-2}(\mathbf{\overline{X}}/\sigma_{\epsilon}) \rightarrow H_{x,d}(\mathbf{\overline{X}}/\sigma_{\epsilon})$ is well-defined up to adding $Q^1(\sigma_0)^2 \cdot -$ and $Q^1(\sigma_0) \cdot Q^1(\sigma_1) \cdot -$, and the map $\theta^2 \cdot -$ is well-defined. 

This can shown by using Remark \ref{rem indeterminacy} and the assumptions on $\X$ about the classes $Q^1(\sigma_0)$, $Q^1(\sigma_1)$, $[\sigma_0,\sigma_1]$ plus the fact that $Q^1(\sigma_0)^3$ destabilises twice as explained in Step 1 of the proof of Theorem \ref{thm stab 3}, and using \cite[Proposition 2.3]{maxime} again. 
\end{rem}

\section{$E_2$-algebras from quadratic data} \label{section 4}

There is a general framework of how to get an $E_2$-algebra from a braided monoidal groupoid, see \cite[Section 17.1]{Ek}. 
In this section we will consider braided monoidal groupoids with the extra data of a strong braided monoidal functor to $\mathsf{Set}$, and we will observe that the ``Grothendieck construction'' yields another braided groupoid, called the ``\textit{associated quadratic groupoid''}, and hence another $E_2$-algebra. 

This construction will generalize the way quadratic symplectic groups are constructed from symplectic groups and the way that spin mapping class groups are related to mapping class groups, if we let the extra data be the set of quadratic refinements (hence the use of the term ``quadratic'').  

We will also study the relationship between the $E_2$-algebra of the original groupoid and the one of the associated quadratic groupoid; in particular Theorem \ref{theorem splitting complexes} and Corollary \ref{cor std connectivity} allow us to get some vanishing lines in the $E_2$-homology of the associated quadratic groupoid from knowledge of the original groupoid. 

\subsection{Definition and construction of the $E_2$-algebras} \label{section quadratic data}

Let us start by introducing some notation based on the one in \cite[Section 17]{Ek}.  
All the categories for the rest of this section are discrete. 
A \textit{braided monoidal groupoid} is a triple $(\G,\oplus,\mathds{1})$, where $\G$ is a groupoid, $\oplus$ a braided monoidal structure on $\G$ and $\mathds{1}$ the monoidal unit. 
For an object $x \in \G$ we write $\G_x:=\G(x,x)=\Aut_{\G}(x)$.
We can view any (discrete) monoid as an example of a monoidal groupoid where the only morphisms are the identity; for example $\mathbb{N}$ is naturally a symmetric monoidal groupoid, so in particular braided. 

\begin{definition} \label{defn quadratic data}
A quadratic data consists of a triple $(\G,\rk,Q)$ where
\begin{enumerate}[(i)]
\item $\G=(\G,\oplus,\mathds{1})$ is a braided monoidal groupoid such that $\G_{\mathds{1}}$ is trivial and for any objects $x,y \in \G$ the map $- \oplus -: \G_x \times \G_y \rightarrow \G_{x \oplus y}$ is injective.  
\item $\rk: \G \rightarrow \mathbb{N}$ is a braided monoidal functor such that 
$\rk^{-1}(0)$ consists precisely of those objects isomorphic to $\mathds{1}$. 
\item $Q: \G^{\mathsf{op}} \rightarrow \mathsf{Set}$ is a strong braided monoidal functor. 
\end{enumerate}
\end{definition}

Parts (i) and (ii) are precisely the assumptions needed to apply all the constructions of \cite[Section 17]{Ek}, and part (iii) is the extra ``quadratic'' data. 
One should think of $Q(x)$ as the set of ``quadratic refinements'' of the object $x$; and strong monoidality implies in particular that $Q(\mathds{1})$ is a one element set. 

\begin{definition}
Given a quadratic data $(\G,\rk,Q)$, its associated quadratic groupoid is the braided monoidal groupoid $(\Gq,\oplus^{\mathsf{q}},\mathds{1}^{\mathsf{q}})$ given by the Grothendieck construction $\G \wreath Q$, i.e. 
\begin{enumerate}[(i)]
    \item The set of objects of $\Gq$ is $\bigsqcup_{x \in \G}{Q(x)}$. 
    \item The sets of morphisms are given as follows: for $q \in Q(x)$ and $q' \in Q(x')$, $\Gq(q,q')=\{\phi \in \G(x,x'): Q(\phi)(q')=q\}$. 
    \item The braided monoidal structure $\oplus^{\mathsf{q}}$ is induced by the strong braided monoidality of $Q$, and the monoidal unit $\mathds{1}^{\mathsf{q}}$ is given by the only element in $Q(\mathds{1})$. 
\end{enumerate}
\end{definition}

Let us denote by $\rk^{\mathsf{q}}: \Gq \rightarrow \mathbb{N}$ the braided monoidal functor given by $q \in Q(x) \mapsto \rk(x)$. 
By construction the group $\Gq_{\mathds{1}^{\mathsf{q}}}$ is trivial and for any objects $q,q' \in \Gq$ the map $- \oplus^{\mathsf{q}} -: \Gq_q \times \Gq_{q'} \rightarrow \Gq_{q \oplus^{\mathsf{q}} q'}$ is injective. 
Also, $(\rk^{\mathsf{q}})^{-1}(0)$ consists precisely of those objects isomorphic to $\mathds{1}^{\mathsf{q}}$. 
Thus, $(\Gq,\oplus^{\mathsf{q}},\mathds{1}^{\mathsf{q}},\rk^{\mathsf{q}})$ satisfies all the assumptions of \cite[Section 17]{Ek}, so by \cite[Section 17.1]{Ek} there is $\Rq \in \Alg_{E_2}(\sSet^{\mathbb{N}})$ such that 
$$\Rq(n) \simeq \left\{ \begin{array}{lcc}
             \emptyset & if & n=0 \\
             \underset{[q] \in \pi_0(\Gq): \; \rk^{\mathsf{q}}(q)=n}{\bigsqcup}{B \Gq_q} & if & n>0.
             \end{array}
   \right.$$
We shall call $\Rq$ the \textit{quadratic $E_2$-algebra} associated to a quadratic data.    
Alternatively, in the explicit construction of $\Rq$ in \cite[Section 17.1]{Ek} we can perform the Kan extension along the projection $\Gq \rightarrow \pi_0(\Gq)$ instead of along $\Gq \xrightarrow{\rk^{\mathsf{q}}} \mathbb{N}$, and hence we can view $\Rq \in \Alg_{E_2}(\sSet^{\pi_0(\Gq)})$ such that 
$$\Rq([q]) \simeq \left\{ \begin{array}{lcc}
             \emptyset & if \; q \cong \mathds{1}^{\mathsf{q}} \\
             B \Gq_q & otherwise.
             \end{array}
   \right.$$

We will not distinguish between these two, as sometimes it will be more convenient to think of $\Rq$ as being $\mathbb{N}$-graded an other times as $\pi_0(\Gq)$-graded. 

\begin{rem} \label{remark path components}
When we view $\Rq$ as $\pi_0(\Gq)$-graded we have that $\Rq([q])$ is path-connected for any $[q] \neq 0 \in \pi_0(\Gq)$.
Thus, the strictly associative algebra $\overline{\Rq}$ satisfies that $\pi_0(\overline{\Rq}) \cong \pi_0(\mathsf{G^q})$ as monoids, where the monoid structure on the left-hand-side is induced by the product. 
In particular, the ring $H_{*,0}(\overline{\Rq})$ is determined by the monoidal structure of $\pi_0(\mathsf{G^q})$. 

\end{rem}

Similarly, we can apply the construction of \cite[Section 17.1]{Ek} to $(\G,\oplus,\mathds{1},\rk)$ to get 
$\R \in \Alg_{E_2}(\sSet^{\mathbb{N}})$ such that 
$$\R(n) \simeq \left\{ \begin{array}{lcc}
             \emptyset & if & n=0 \\
             \underset{[x] \in \pi_0(\G): \; \rk(x)=n}{\bigsqcup}{B \G_x} & if & n>0.
             \end{array}
   \right.$$   
We will refer to $\R$ as the \textit{non-quadratic $E_2$-algebra}.    
The obvious braided monoidal functor $\Gq \rightarrow \G$ then induces an $E_2$-algebra map $\Rq \rightarrow \R$.    

\subsection{$E_1$-splitting complexes of quadratic groupoids}

Recall \cite[Definition 17.9]{Ek} that given a monoidal groupoid $\G$ with a rank functor $\rk: \G \rightarrow \mathbb{N}$ satisfying properties (i) and (ii) of Definition \ref{defn quadratic data} and an element $x \in \G$, the \textit{$E_1$-splitting complex} $S^{E_1,\G}_{\bullet}(x)$ is the semisimplicial set with $p$-simplices given by 
$$S^{E_1,\G}_p(x):= \underset{(x_0,\cdots,x_{p+1}) \in \G_{\rk>0}^{p+2}}{\colim}{\G(x_0 \oplus \cdots \oplus x_{p+1},x)}$$
and face maps given by the monoidal structure. 
(Where $\G_{\rk>0}$ denotes the full subgroupoid of $\G$ on those objects $x$ with $\rk(x)>0$, i.e. on those objects not isomorphic to $\mathds{1}$.)

The main result of this section is the following result which will allow us to understand splitting complexes of quadratic groupoids. 

\begin{theorem} \label{theorem splitting complexes}
Let $(\G,\rk,Q)$ be a quadratic data, then for any $q \in Q(x)$ there is an isomorphism of semisimplicial sets $S_{\bullet}^{E_1,\Gq}(q) \cong S_{\bullet}^{E_1,\G}(x)$.    
\end{theorem}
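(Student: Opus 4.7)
The plan is to build the isomorphism level by level, exploiting the fact that strong monoidality of $Q$ makes each morphism in $\G$ lift canonically to a morphism in $\Gq$. First, I would unpack what a $p$-simplex of $S_\bullet^{E_1,\Gq}(q)$ is: by definition of $\Gq$ as the Grothendieck construction $\G \wreath Q$, it is the colimit of sets $\Gq(q_0 \oplus^{\mathsf{q}} \cdots \oplus^{\mathsf{q}} q_{p+1}, q)$, and each element is a morphism $\phi \in \G(x_0 \oplus \cdots \oplus x_{p+1}, x)$ equipped with the extra requirement that $Q(\phi)(q) = q_0 \oplus \cdots \oplus q_{p+1}$ in $Q(x_0 \oplus \cdots \oplus x_{p+1})$, where $q_i \in Q(x_i)$ and $\rk(x_i) > 0$.

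Next I would define the forward map $\Phi_p \colon S_p^{E_1,\Gq}(q) \to S_p^{E_1,\G}(x)$ by forgetting the quadratic decoration, i.e. sending the class of a morphism $\phi \colon q_0 \oplus^{\mathsf{q}} \cdots \oplus^{\mathsf{q}} q_{p+1} \to q$ to the class of its underlying morphism $\phi \colon x_0 \oplus \cdots \oplus x_{p+1} \to x$ in $\G$. This respects the colimit identifications because any isomorphism $\psi_i \colon q'_i \to q_i$ in $\Gq$ forgets to an isomorphism $\psi_i \colon x'_i \to x_i$ in $\G$.

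The inverse is where the strong monoidality of $Q$ is crucial. Given $\phi \colon x_0 \oplus \cdots \oplus x_{p+1} \to x$ in $\G$, I use the isomorphism $Q(x_0 \oplus \cdots \oplus x_{p+1}) \cong Q(x_0) \times \cdots \times Q(x_{p+1})$ coming from the strong braided monoidal structure to write $Q(\phi)(q)$ uniquely as a tuple $(q_0, \ldots, q_{p+1})$ with $q_i \in Q(x_i)$; this uniqueness is exactly what promotes $\phi$ to a canonical morphism in $\Gq$. Since $\rk^{\mathsf{q}}(q_i) = \rk(x_i) > 0$, the resulting tuple is admissible, and naturality of the strong monoidality isomorphism shows the construction descends to the colimit and is inverse to $\Phi_p$.

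Finally I would check that $\Phi_\bullet$ is a map of semisimplicial sets: the face map $d_i$ on each side collapses the $i$-th and $(i+1)$-st factors via the monoidal product, and the coherence of the strong monoidal isomorphism $Q(x_i \oplus x_{i+1}) \cong Q(x_i) \times Q(x_{i+1})$ ensures that $q_i \oplus^{\mathsf{q}} q_{i+1}$ on the quadratic side matches the pair $(q_i, q_{i+1})$, so the bijection commutes with all face maps. There is no real obstacle here; the entire argument is bookkeeping, and the only substantive input is the strong braided monoidality of $Q$ from Definition \ref{defn quadratic data}(iii), which is exactly what makes the forgetful functor $\Gq \to \G$ a ``discrete fibration'' of braided monoidal groupoids and hence induces an isomorphism on these colimits.
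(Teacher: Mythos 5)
Your proposal is correct and follows essentially the same route as the paper: define the forgetful map on each level and use strong monoidality of $Q$ to recover, from $\phi$ and $Q(\phi)(q)$, a unique tuple of quadratic refinements $(q_0,\dots,q_{p+1})$ promoting $\phi$ to a morphism in $\Gq$. The only cosmetic difference is that you package this as an explicit two-sided inverse, whereas the paper argues surjectivity and injectivity of the forgetful map separately (the injectivity step carefully verifying exactly the well-definedness-on-the-colimit claim you assert via naturality).
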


\begin{proof}
By definition
$$S_{p}^{E_1,\G}(x)= \underset{(x_0,\cdots,x_{p+1}) \in \G_{\rk>0}^{p+2}}{\colim}{\G(x_0 \oplus \cdots \oplus x_{p+1},x)}$$
and 
$$S_p^{E_1,\Gq}(q)= \underset{(q_0,\cdots,q_{p+1}) \in \Gq_{\rk^{\mathsf{q}}>0}^{p+2}}{\colim}{\Gq(q_0 \oplus^{\mathsf{q}} \cdots \oplus^{\mathsf{q}} q_{p+1},q)}$$

The inclusions $\Gq(q_0 \oplus^{\mathsf{q}} \cdots \oplus^{\mathsf{q}} q_{p+1},q) \subset \G(x_0 \oplus \cdots \oplus x_{p+1},x)$, for each $(q_0,\cdots,q_{p+1}) \in \Gq_{\rk^{\mathsf{q}}>0}^{p+2}$ with $q_i \in Q(x_i)$ and $q \in Q(x)$, assemble into canonical maps
\[\Scale[0.75]{S_p^{E_1,\Gq}(q)=\underset{(q_0,\cdots,q_{p+1}) \in \Gq_{\rk^{\mathsf{q}}>0}^{p+2}}{\colim}{\Gq(q_0 \oplus^{\mathsf{q}} \cdots \oplus^{\mathsf{q}} q_{p+1},q)} \rightarrow \underset{(x_0,\cdots,x_{p+1}) \in \G_{\rk>0}^{p+2}}{\colim}{\G(x_0 \oplus \cdots \oplus x_{p+1},x)}=S_p^{E_1,\G}(x)}\]
which are compatible with the face maps of both semisimplicial sets because the natural functor $\Gq \rightarrow \G$ is monoidal. 
Thus, it suffices to show that $S_{p}^{E_1,\Gq}(q) \rightarrow S_{p}^{E_1,\G}(x)$ is a bijection of sets for all $p$. 

Surjectivity: 
any element on the right hand side is represented by some $\phi \in \G(x_0 \oplus \cdots \oplus x_{p+1},x)$ which is an isomorphism since $\G$ is a groupoid. 
Since $Q$ is strong monoidal, $Q(\phi): Q(x) \xrightarrow{\cong} Q(x_0) \times \cdots \times Q(x_{p+1})$ is an isomorphism.
Let $q_i:= \proj_i ( Q(\phi)(q)) \in Q(x_i)$, then $\phi \in \Gq(q_0 \oplus^{\mathsf{q}} \cdots \oplus^{\mathsf{q}} q_{p+1},q)$ defines an element on the left hand side mapping to the required element. 

Injectivity: 
suppose that two elements on the left hand side have the same image on the right hand side.
Represent them by $\phi^i \in \Gq(q_0^i \oplus^{\mathsf{q}} \cdots \oplus^{\mathsf{q}} q_{p+1}^i,q)$, where $Q(\phi^i)(q)=q_0^i \oplus^{\mathsf{q}} \cdots \oplus^{\mathsf{q}} q_{p+1}^i$ and $i \in \{1,2\}$ is an index. 

Since $\phi^1$ and $\phi^2$ agree on the colimit of the right hand side then there is an element $\phi \in \G(x_0 \oplus \cdots +x_{p+1},x)$ and morphisms $(\psi_0^i,\cdots ,\psi_{p+1}^i): (x_0^i,\cdots ,x_{p+1}^i) \rightarrow (x_0, \cdots ,x_{p+1})$ in $\G_{\rk>0}^{p+2}$ such that $\phi^i \circ (\psi_0^i, \cdots,\psi_{p+1}^i)^{-1}= \phi$ for $i \in \{1,2\}$. 

Let ${q'_a}^i:= Q({\psi_a^i}^{-1})(q_a^i) \in Q(x_a)$, we claim that ${q'_a}^1={q'_a}^2$ for $i \in \{1,2\}$: 
$Q(\psi_0^i \oplus \cdots \oplus \psi_{p+1}^i) ({q'_0}^i \oplus^{\mathsf{q}} \cdots \oplus^{\mathsf{q}} {q'_{p+1}}^i)= (q_0^i \oplus^{\mathsf{q}} \cdots \oplus^{\mathsf{q}} q_{p+1}^i)=Q(\phi^i)(q)$ and hence 
${q'_0}^i \oplus^{\mathsf{q}} \cdots \oplus^{\mathsf{q}} {q'_{p+1}}^i=Q(\phi^i\circ (\psi_0^i \oplus \cdots \oplus \psi_{p+1}^i)^{-1})(q)=Q(\phi)(q)$. 
Since $Q(\phi)(q)$ is independent of $i \in \{1,2\}$ then the claim follows by the strong monoidality $Q$ since ${q'_a}^1, {q'_a}^2 \in Q(x_a)$ for all $a$. 

Now let $q_a:={q'_a}^1={q'_a}^2$, then by definition $Q(\psi_0^i \oplus \cdots \oplus \psi_{p+1}^i) ({q}_0 \oplus^{\mathsf{q}} \cdots \oplus^{\mathsf{q}} {q}_{p+1})= (q_0^i \oplus^{\mathsf{q}} \cdots \oplus^{\mathsf{q}} q_{p+1}^i)$ and hence $(\psi_0^i, \cdots,\psi_{p+1}^i) \in \Gq_{\rk^{\mathsf{q}>0}}^{p+2}$. 
Since $\phi^i \circ (\psi_0^i, \cdots,\psi_{p+1}^i)^{-1}= \phi$ for $i \in \{1,2\}$ by construction, then $\phi^1$ and $\phi^2$ agree on the left hand side colimit, as required. 
\end{proof}

Recall \cite[Definition 17.6, Lemma 17.10]{Ek}: we say that $(\G,\oplus,\mathds{1},\rk)$ \textit{satisfies the standard connectivity estimate} if for any $x \in \G$, the reduced homology of $S^{E_1,\G}(x):=||S_{\bullet}^{E_1,\G}||$ is concentrated in degree $\rk(x)-2$. 
As explained in \cite[Page 188]{Ek} the standard connectivity estimate implies that $H_{n,d}^{E_1}(\R)=0$ for $d<n-1$, where $\R$ is the $E_2$-algebra defined in Section \ref{section quadratic data}.
The following corollary says that the standard connectivity estimate on the underlying braided groupoid of a quadratic data also gives a vanishing line on the $E_2$-homology of the corresponding quadratic $E_2$-algebra. 

\begin{corollary} \label{cor std connectivity}
If $(\G,\rk,Q)$ is a quadratic data such that $(\G,\rk)$ satisfies the standard connectivity estimate then $H_{x,d}^{E_2}(\Rq)=0$ for $d<\rk(x)-1$.   
\end{corollary}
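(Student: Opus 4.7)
The plan is to deduce the corollary as a direct consequence of Theorem \ref{theorem splitting complexes}, by using that theorem to transfer the standard connectivity estimate from $(\G,\rk)$ to the associated quadratic groupoid $(\Gq,\rk^{\mathsf{q}})$, after which the desired vanishing line on $H^{E_2}_{x,d}(\Rq)$ follows from the same general machinery of \cite[Section 17]{Ek} that was quoted for $\R$ at the end of Section \ref{section quadratic data}.

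First I would record that the hypotheses of the $E_2$-algebra machinery of \cite[Section 17]{Ek} apply to $(\Gq,\oplus^{\mathsf{q}},\mathds{1}^{\mathsf{q}},\rk^{\mathsf{q}})$; this was already observed in Section \ref{section quadratic data} using that $Q$ is strong monoidal (so $Q(\mathds{1})$ is a singleton) and that the analogous assumptions on $(\G,\rk)$ are inherited by the Grothendieck construction, so no additional work is needed here. Next, for an arbitrary object $q \in \Gq$, write $q \in Q(x)$ for the unique $x \in \G$, so that $\rk^{\mathsf{q}}(q) = \rk(x)$. Theorem \ref{theorem splitting complexes} then supplies an isomorphism of semisimplicial sets $S^{E_1,\Gq}_{\bullet}(q) \cong S^{E_1,\G}_{\bullet}(x)$, and hence a homeomorphism of geometric realizations $S^{E_1,\Gq}(q) \cong S^{E_1,\G}(x)$. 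By the standing hypothesis that $(\G,\rk)$ satisfies the standard connectivity estimate, the reduced homology of the right-hand side is concentrated in degree $\rk(x)-2 = \rk^{\mathsf{q}}(q)-2$, so transporting across the isomorphism yields the standard connectivity estimate for $(\Gq,\rk^{\mathsf{q}})$.

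With standard connectivity established for the quadratic groupoid, the vanishing $H^{E_2}_{x,d}(\Rq) = 0$ for $d < \rk(x)-1$ follows by invoking \cite[Section 17]{Ek} exactly as was done for $\R$ in the paragraph preceding the statement of the corollary. There is no genuine obstacle: once Theorem \ref{theorem splitting complexes} is in hand the corollary is purely formal, and all the real content has been absorbed into the identification of the splitting complexes.
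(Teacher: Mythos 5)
Your overall approach matches the paper's: use Theorem \ref{theorem splitting complexes} to transport the standard connectivity estimate from $(\G,\rk)$ to $(\Gq,\rk^{\mathsf{q}})$, then deduce the $E_2$-homology vanishing from general results in \cite{Ek}. That part is fine.

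However, there is a gap in the final step. The discussion of \cite[Page 188]{Ek} that precedes the corollary only asserts a vanishing line for the \emph{$E_1$-homology}: the standard connectivity estimate for $(\Gq,\rk^{\mathsf{q}})$ yields $H^{E_1}_{x,d}(\Rq)=0$ for $d<\rk(x)-1$, not the $E_2$-homology vanishing that the corollary claims. Your proof reads as if the $E_2$-vanishing ``follows by invoking \cite[Section 17]{Ek} exactly as was done for $\R$,'' but what was done for $\R$ was only the $E_1$ statement. To pass from $E_1$-homology vanishing to $E_2$-homology vanishing you need a separate input — the ``transferring vanishing lines up'' theorem, \cite[Theorem 14.4]{Ek} — which is exactly the citation the paper's proof adds as its last line. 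Without this you have proved the $E_1$-homology vanishing for $\Rq$ but not the $E_2$-homology vanishing asserted in the corollary.
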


\begin{proof}
    By Theorem \ref{theorem splitting complexes} and the standard connectivity estimate, the reduced homology of $S^{E_1,\Gq}(q)$ is concentrated in degree $\rk^{\mathsf{q}}(q)-2$ for any $q \in \Gq$. 
    Thus, by \cite[Page 188]{Ek} we have $H_{x,d}^{E_1}(\Rq)=0$ for $d<\rk(x)-1$.  
    Finally, the ``transferring vanishing lines up'' theorem, \cite[Theorem 14.4]{Ek} implies the result. 
\end{proof}

\section{Quadratic symplectic groups} \label{section symplectic}

\subsection{Construction of the $E_2$-algebra}
For a given $g \geq 0$ we let the \textit{standard symplectic form} on $\mathbb{Z}^{2g}$ be the matrix $\Omega_g$ given by the block diagonal sum of $g$ copies of $\begin{psmallmatrix}
    0 & 1 \\ -1 & 0
\end{psmallmatrix}.$ 
The \textit{genus $g$ symplectic group} is defined by $Sp_{2g}(\mathbb{Z}):= \Aut(\mathbb{Z}^{2g},\Omega_g)$. 

Let $(\Sp,\oplus,0)$ be the symmetric monoidal groupoid with objects the non-negative integers, morphisms 
$\Sp(g,h)=\left\{ \begin{array}{lcc}
             Sp_{2g}(\mathbb{Z}) & if \; g=h \\
            \emptyset & otherwise,
             \end{array}
   \right.$
where the (strict) monoidal structure $\oplus$ is given by addition on objects and block diagonal sum on morphisms, $0$ is the (strict) monoidal unit and the braiding $\beta_{g,h}: g \oplus h \xrightarrow{\cong} h \oplus g$ is given by the matrix 
$\begin{psmallmatrix} 0 & I_{2h} \\ I_{2g} & 0 \end{psmallmatrix}$, which satisfies $\beta_{h,g} \beta_{g,h}=\id_{g+h}$. 

We let $\rk: \Sp \rightarrow \mathbb{N}$ be the symmetric monoidal functor given by identity on objects
and let $Q: \mathsf{Sp^{op}} \rightarrow \mathsf{Set}$ be the functor given as follows
\begin{enumerate}[(i)]
    \item On objects, $Q(g):=\{q: \mathbb{Z}^{2g} \rightarrow \mathbb{Z}/2: \; q(x+y)\equiv q(x)+ q(y)+ x \cdot y (\mod 2) \}$, where $\cdot$ represents the skew-symmetric product induced by the standrd symplectic form. 
    \item On morphisms, for $\phi \in Sp_{2g}(\mathbb{Z})$ and $q \in Q(g)$ we let $Q(\phi)(q)= q \circ \phi$.
\end{enumerate}

In other words, $Q(g)$ is the set of quadratic refinements on $(\mathbb{Z}^{2g},\Omega_g)$, as defined in Section \ref{section intro}. 
Strong symmetric monoidality of $Q$ follows from the fact that a quadratic refinement $q \in Q(g)$ is the same data as a function of sets from a basis of $\mathbb{Z}^{2g}$ to $\mathbb{Z}/2$. 
Thus, $(\Sp,\rk,Q)$ is a quadratic data in the sense of Definition \ref{defn quadratic data}. 

By Section \ref{section quadratic data} we get an associated quadratic groupoid $\mathsf{Sp^q}$ and a quadratic $E_2$-algebra $\mathbf{R^{\mathsf{q}}}$, which in this case is actually $E_{\infty}$ because the groupoid is symmetric and not just braided; however, this will not make a difference for the purposes of this paper. 

The next goal is to describe $\pi_0(\mathsf{Sp^q})$, which by Remark \ref{remark path components} gives a computation of $H_{*,0}(\overline{\Rq})$. 
In order to do so, we need to introduce the so called \textit{Arf invariant}. 

\begin{definition} \label{defn arf}
Given a quadratic refinement $q \in Q(g)$ of the standard symplectic form on $\mathbb{Z}^{2g}$, we define the Arf invariant of $q$ via $\Arf(q):=\sum_{i=1}^{g}{q(e_i) q(f_i)} \in \mathbb{Z}/2$, where $(e_1,f_1,\cdots,e_g,f_g)$ is the standard (ordered) basis of $\mathbb{Z}^{2g}$.
\end{definition}

The key property of this invariant is that for $q,q' \in Q(g)$ we have $\Arf(q)=\Arf(q')$ if and only if there exists $\phi \in Sp_{2g}(\mathbb{Z})$ such that $q'=Q(\phi)(q)$. 
Moreover, for $g \geq 1$ it is clear that $\Arf: Q(g) \rightarrow \mathbb{Z}/2$ is surjective. 

Before stating the next result recall the monoid $\mathsf{H}:= \{0\} \cup \mathbb{N}_{>0} \times \mathbb{Z}/2$, where the monoidal structure $+$ is given by addition in both coordinates, considered at the beginning of Section \ref{section Ek}.

\begin{lemma} \label{lem arf inv}
Taking rank and Arf invariant gives an isomorphism of monoids $(\rk,\Arf): \pi_0(\mathsf{Sp^q}) \xrightarrow{\simeq} \mathsf{H}$. 
\end{lemma}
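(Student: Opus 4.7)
The plan is to verify bijectivity on underlying sets and then multiplicativity with respect to the monoid structures separately.

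First I would unpack what $\pi_0(\mathsf{Sp^q})$ is. By construction of the Grothendieck construction, the objects of $\mathsf{Sp^q}$ are pairs $(g,q)$ with $g \in \mathbb{N}$ and $q \in Q(g)$, and there is a morphism $(g,q) \to (g',q')$ only when $g=g'$, given by some $\phi \in Sp_{2g}(\mathbb{Z})$ with $Q(\phi)(q')=q$. Hence
\[
\pi_0(\mathsf{Sp^q}) \;=\; \bigsqcup_{g \geq 0} Q(g)/Sp_{2g}(\mathbb{Z}).
\]
For $g=0$, $Q(0)$ is a one-element set (since $Q$ is strong monoidal, $Q(\mathds{1})$ is terminal), giving one class, which maps to $0 \in \mathsf{H}$ and which is also the unit for $\oplus^{\mathsf{q}}$. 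For $g \geq 1$, the key property of the Arf invariant recalled right after Definition \ref{defn arf}, namely that two quadratic refinements of $(\mathbb{Z}^{2g},\Omega_g)$ lie in the same $Sp_{2g}(\mathbb{Z})$-orbit if and only if they have the same Arf invariant, together with the surjectivity of $\Arf: Q(g) \to \mathbb{Z}/2$ for $g \geq 1$, gives a bijection $Q(g)/Sp_{2g}(\mathbb{Z}) \cong \mathbb{Z}/2$. Assembling these bijections yields a bijection of underlying sets $(\rk,\Arf): \pi_0(\mathsf{Sp^q}) \xrightarrow{\cong} \mathsf{H}$.

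Next I would check that this bijection preserves the monoid structures. Additivity of $\rk$ under $\oplus^{\mathsf{q}}$ is immediate, because $\rk^{\mathsf{q}}$ was defined to factor through $\rk: \mathsf{Sp} \to \mathbb{N}$ which is (symmetric) monoidal. The content is therefore the additivity of the Arf invariant: if $q \in Q(g)$ and $q' \in Q(g')$, then $q \oplus q' \in Q(g+g')$ (the quadratic refinement obtained via the strong monoidal structure of $Q$) satisfies
\[
\Arf(q \oplus q') \;=\; \Arf(q) + \Arf(q') \pmod{2}.
\]
This follows directly from Definition \ref{defn arf}: the standard symplectic basis of $\mathbb{Z}^{2(g+g')}$ is the concatenation of the standard bases of $\mathbb{Z}^{2g}$ and $\mathbb{Z}^{2g'}$, and by strong monoidality $q \oplus q'$ restricts to $q$ on the first block and to $q'$ on the second, so the defining sum $\sum_i (q\oplus q')(e_i)(q\oplus q')(f_i)$ splits as the sum of the analogous sums for $q$ and $q'$. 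This is also consistent with the additivity statement asserted in Section \ref{section results}.

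The only step requiring any care is this last additivity, but it is essentially a bookkeeping exercise once one writes out the standard basis in block form; nothing deeper is needed. Combining the bijection with the additivity of $\rk$ and $\Arf$ yields that $(\rk,\Arf)$ is an isomorphism of monoids, as required.
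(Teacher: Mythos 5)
Your proof is correct and follows essentially the same route as the paper's: the paper's (very terse) argument also reduces the claim to the orbit classification of quadratic refinements by the Arf invariant for bijectivity, and to the monoidality of $\rk$ together with the additivity of $\Arf$ read off from its explicit formula for the monoid homomorphism property. You simply unpack the decomposition $\pi_0(\mathsf{Sp^q}) \cong \bigsqcup_{g\geq 0} Q(g)/Sp_{2g}(\mathbb{Z})$ and spell out the block-basis computation that the paper compresses into ``by its explicit formula.''
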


\begin{proof}
The map $(\rk,\Arf): \pi_0(\mathsf{Sp^q}) \rightarrow \mathsf{H}$ is clearly surjective, it is injective and well-defined by the above discussion of the Arf invariant, and it is monoidal because $\rk$ is monoidal and $\Arf$ is also monoidal by its explicit formula.  
\end{proof}

Under this identification of $\pi_0(\Rq)$ we have that $\Rq(g,\epsilon)=B Sp_{2g}^{\epsilon}(\mathbb{Z})$ is the classifying space of a quadratic symplectic group in the sense of Section \ref{section results}. 
Thus, by Section \ref{section E2 algebras overview}, Theorem \hyperref[theorem B]{B} is equivalent to certain vanishing lines of $H_{*,*}(\mathbf{\overline{\Rq}}/\sigma_{\epsilon};\mathds{k})$ and $H_{*,*}(\mathbf{\overline{\Rq}}/(\sigma_{\epsilon},\theta);\FF)$.
\
\subsection{Proof of Theorem \hyperref[theorem B]{B}}

The only additional ingredient that we need to prove Theorem \hyperref[theorem B]{B} is to understand the $E_1$-splitting complex of $(\Sp,\rk)$. 

\begin{proposition} \label{prop std connect}
$(\Sp,\rk)$ satisfies the standard connectivity estimate, i.e. for $g \in \mathbb{N}$ the reduced homology of $S^{E_1,\Sp}(g)$ is concentrated in degree $g-2$.     
\end{proposition}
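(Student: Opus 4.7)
The plan is to identify the $E_1$-splitting complex $S^{E_1,\Sp}(g)$ with (or compare it to) a well-studied complex of symplectic direct-sum decompositions of $(\mathbb{Z}^{2g},\Omega_g)$, and then deduce the required concentration of reduced homology from a classical high-connectivity result.

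First, I would unwind the definitions: a $p$-simplex of $S_p^{E_1,\Sp}(g)$ amounts to an (ordered) orthogonal symplectic splitting
\[ (\mathbb{Z}^{2g},\Omega_g) = V_0 \perp V_1 \perp \cdots \perp V_{p+1} \]
into non-zero symplectic summands, with face maps merging adjacent blocks. Since each $V_i$ has $\rk(V_i)\geq 1$, one has $p \leq g-2$, so $|S^{E_1,\Sp}(g)|$ is at most $(g-2)$-dimensional, and the proposition reduces to showing that this realisation is $(g-3)$-connected.

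Next, by the standard Quillen-type comparison developed in \cite[Section 17]{Ek} (analogous to the treatment of $\mathsf{GL}$ there, and to the mapping class group comparison in \cite{E2}), $|S^{E_1,\Sp}(g)|$ is weakly equivalent to the order complex of the poset $\mathcal{P}_g$ of proper non-trivial symplectic direct summands of $(\mathbb{Z}^{2g},\Omega_g)$, the weak equivalence being induced by sending an ordered splitting $V_0\perp\cdots\perp V_{p+1}$ to the flag of partial sums. Chains in $\mathcal{P}_g$ are flags of symplectic summands of strictly increasing even $\mathbb{Z}$-rank drawn from $\{2,4,\ldots,2g-2\}$, so $|\mathcal{P}_g|$ again has dimension $g-2$.

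The required $(g-3)$-connectivity of $|\mathcal{P}_g|$ is then a classical Cohen--Macaulay result for the poset of symplectic summands over a PID, which can be extracted from the work of Charney on symplectic buildings, or equivalently from the connectivity of the complex of unimodular symplectic vectors established by Mirzaii--van der Kallen. The place where real care is required is the comparison step: one must verify that the conventions for the $E_1$-splitting complex of \cite{Ek} match precisely with the standard symplectic-summand poset used in the classical literature, and that the inclusion/exclusion of the unit block and of orderings does not shift the connectivity range. I expect this to be essentially formal, running entirely parallel to the $\mathsf{GL}$ case already handled in \cite{Ek}.
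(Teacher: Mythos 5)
Your proposal matches the paper's argument in all essentials: the paper likewise identifies $S_\bullet^{E_1,\Sp}(g)$ with the nerve of the poset of proper non-trivial standard-symplectic summands of $(\mathbb{Z}^{2g},\Omega_g)$ (ordered by inclusion, with the isomorphism sending an ordered splitting to its flag of partial sums) and then cites a Cohen--Macaulay result for that poset. The only refinement is that the paper produces an explicit levelwise isomorphism of semisimplicial sets --- using the coset description of \cite[Remark 17.11]{Ek} and a stabilizer computation for injectivity, and the extendability of any standard symplectic sublattice to an automorphism of $(\mathbb{Z}^{2g},\Omega_g)$ for surjectivity --- rather than invoking a Quillen-type weak equivalence.
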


\begin{proof}
Let $P(g)$ be the poset whose elements are submodules $0 \subsetneq M \subsetneq \mathbb{Z}^{2g}$ such that $(M,\Omega_g|_{M})$ is isomorphic to the standard symplectic form $(\mathbb{Z}^{2h},\Omega_h)$ for some $0<h<g$, ordered by inclusion. 
Let $P_{\bullet}(g)$ be the nerve of the poset, viewed as a semisimplicial set with $p$-simplices strict chains $M_0 \subsetneq M_1 \subsetneq \cdots \subsetneq M_p$ in $P(g)$, and face maps given by forgetting elements in the chain. 

The first step in the proof is about comparing $P_{\bullet}(g)$ with the $E_1$-splitting complex.

\begin{claim}
There is an isomorphism of semisimplicial sets $S_{\bullet}^{E_1,\Sp}(g) \rightarrow P_{\bullet}(g)$. 
\end{claim}

\begin{proof}
By \cite[Remark 17.11]{Ek} we have the following more concrete description of $S_{\bullet}^{E_1,\Sp}(g)$: 
$$S_{p}^{E_1,\Sp}(g)= \bigsqcup_{(g_0,\cdots,g_{p+1}): \; g_i>0, \; \sum_{i} g_i= g}{\frac{Sp_{2g}(\mathbb{Z})}{Sp_{2g_0}(\mathbb{Z}) \times Sp_{2g_1}(\mathbb{Z}) \times \cdots \times Sp_{2g_{p+1}}(\mathbb{Z})}}$$
with the obvious face maps. 

For each $0<n<g$ we let $M_n:=\mathbb{Z}^{2n} \oplus 0 \subset \mathbb{Z}^{2g}$, so that we have a chain $M_1< \cdots < M_{g-1}$ in $P(g)$. 
For each tuple $(g_0,\cdots,g_{p+1})$ with $g_i>0$ and $\sum_{i}{g_i}=g$ we have a $p$-simplex $\sigma_{g_0,\cdots,g_{p+1}}:= M_{g_0}< M_{g_0+g_1} < \cdots < M_{g_0+\cdots+g_p} \in P_p(g)$.

The group $Sp_{2g}(\mathbb{Z})$ acts simplicially on $P_{\bullet}(g)$, and under this action the stabilizer of $\sigma_{g_0,\cdots,g_{p+1}}$ is precisely $Sp_{2g_0}(\mathbb{Z}) \times Sp_{2g_1}(\mathbb{Z}) \times \cdots \times Sp_{2g_{p+1}}(\mathbb{Z}) \subset Sp_{2g}(\mathbb{Z})$.
Thus, we indeed get a levelwise injective map of semisimplicial sets $S_{\bullet}^{E_1,\Sp}(g) \rightarrow P_{\bullet}(g)$.

Levelwise surjectivity follows from the fact that for a given $M \in P(g)$, any isomorphism  $(M,\Omega_g|_M) \xrightarrow{\cong} (\mathbb{Z}^{\rk{M}},\Omega_{\rk(M)/2})$ can be extended to an automorphism of $(\mathbb{Z}^{2g},\Omega_g)$. 
This is a consequence of the classification of non-degenerate skew-symmetric forms over finitely generated free $\mathbb{Z}$-modules.
\end{proof}

Let us denote $L:= (\mathbb{Z}^{2g},\Omega_g)$. 
The poset $P(g)$ is then the same as $\mathcal{U}(L)_{0<-<L}$ in the sense of \cite[Section 1]{spherical}. 
By \cite[Theorem 1.1]{spherical} the poset $\mathcal{U}(L)$ is Cohen-Macaulay of dimension $g$, and in particular the poset $\mathcal{U}(L)_{0<-<L}$ is $(g-3)$-connected and $(g-2)$-dimensional, giving the result.
\end{proof}

\begin{proof}[Proof of Theorem]
\textbf{Part (i).} 
By Lemma \ref{lem arf inv} and Remark \ref{remark path components} we have $\Rq \in \Alg_{E_2}(\sSet^{\mathsf{H}})$ such that $\Rq(x)$ is path-connected for each $x \in \mathsf{H}\setminus \{0\}$ and $\Rq(0)=\emptyset$. 
Thus, $H_{0,0}(\Rq)=0$ and $H_{*,0}(\mathbf{\overline{\Rq}})=\mathbb{Z}[\sigma_0,\sigma_1]/(\sigma_1^2-\sigma_0^2)$ as a ring, where $\sigma_{\epsilon}$ is generated by a point in $\Rq((1,\epsilon))$. 
By Proposition \ref{prop std connect}, $(\Sp,\rk)$ satisfies the standard connectivity estimate, and thus by Corollary \ref{cor std connectivity} we get that $H_{x,d}^{E_2}(\Rq)=0$ for $d<\rk(x)-1$. 

If we now consider $\mathbf{X}:= \mathbf{\Rq_{\mathbb{Z}}} \in \Alg_{E_2}(\sMod_{\mathbb{Z}}^{\mathsf{H}})$ then it satisfies the assumptions of Theorem \ref{theorem stab 1} by \cite[Lemma 18.2]{Ek} and the properties of $(-)_{\mathbb{Z}}$ explained in Section \ref{section E2 algebras overview}. 
Thus the claimed homological stability for $\Rq$ follows. 

\textbf{Part (ii).}
This time let $\mathbf{X}:= \mathbf{\Rq_{\mathbb{Z}[1/2]}} \in \Alg_{E_2}(\sMod_{\mathbb{Z}[1/2]}^{\mathsf{H}})$. 
As before, this algebra satisfies the unnumbered assumptions of Theorem \ref{theorem stab 2}. 
We will check that it also verifies assumptions (i),(ii) and (iii) and then the required stability will follow from Theorem \ref{theorem stab 2}. 
By the universal coefficients theorem, to check them it suffices to prove that 
 $H_{x,1}(\Rq)$ is $2$-torsion for $\rk(x) \in \{2,3\}$, which follows from Theorems \ref{thm: 6.7} and \ref{thm: 6.8} in the Appendix.

\textbf{Secondary stability.}
Let $\mathbf{X}:= \mathbf{\Rq_{\FF}} \in \Alg_{E_2}(\sMod_{\FF}^{\mathsf{H}})$. 
Then Theorem \ref{thm stab 3} applies by Theorems \ref{thm: 6.7} and \ref{thm: 6.8} in the Appendix and the universal coefficients theorem. 
The result then follows by the long exact sequence of the cofibration 
$S^{(4,0),2} \otimes \mathbf{\overline{X}}/\sigma_{\epsilon} \rightarrow \mathbf{\overline{X}}/\sigma_{\epsilon} \rightarrow \mathbf{\overline{X}}/(\sigma_{\epsilon},\theta).$
\end{proof}

\section{Spin mapping class groups} \label{section mcg}

Consider the braided monoidal groupoid $(\MCG,\oplus,0)$ defined in \cite[section 4]{E2} whose objects are the non-negative integers and morphisms are given by 
$$\MCG(g,h)=\left\{ \begin{array}{lcc}
             \Gamma_{g,1} &if \; g=h \\
            \emptyset & otherwise.
             \end{array}
   \right.$$
The monoidal structure on $\MCG$ is given by addition on objects and by ``gluing diffeomorphisms'' on morphisms, using the decomposition of $\Sigma_{g+h,1}$ as a boundary connected sum $\Sigma_{g,1} \natural \Sigma_{h,1}$. 
The braiding is induced by the half right-handed Dehn twist along the boundary. 
Let $\rk: \MCG \rightarrow \mathbb{N}$ be the braided monoidal functor given by identity on objects. 

Let $Q: \mathsf{MCG^{op}} \rightarrow \mathsf{Set}$ be the functor given as follows
\begin{enumerate}[(i)]
    \item On objects, 
    $$Q(g)=\{q:H_1(\Sigma_{g,1};\mathbb{Z}) \rightarrow \mathbb{Z}/2: \; q(x+y) \equiv q(x)+q(y)+x \cdot y (\mod 2)\},$$
    where $\cdot$ is the homology intersection pairing. 
    \item On morphisms, for $\phi \in \Gamma_{g,1}$ and $q \in Q(g)$ we let $Q(\phi)(q)=q \circ \phi_*$. 
\end{enumerate}

In other words, $Q(g)$ is the set of quadratic refinements of the intersection product in $H_1(\Sigma_{g,1};\mathbb{Z})$, which is isomorphic to the standard hyperbolic form of genus $g$. 
By the argument of Section \ref{section symplectic}, $Q$ is strong braided monoidal, so $(\MCG,\rk,Q)$ is a quadratic data. 
Moreover, by mimicking the proof of Lemma \ref{lem arf inv} we get that $(\rk,\Arf): \pi_0(\mathsf{MCG^q}) \xrightarrow{\simeq} \mathsf{H}$ is a monoidal isomorphism. 
(This uses the surjectivity of the map $\Gamma_{g,1} \rightarrow Sp_{2g}(\mathbb{Z})$.)

\begin{rem}
Using \cite[Section 2]{rspin} one can check that $\Rq$ agrees with the ``moduli space of spin surfaces with one boundary component'', defined in more geometric terms using tangential structures. 
\end{rem}

Since the $E_2$-algebra $\Rq$ satisfies that $\Rq(n,\epsilon) \simeq B \Gamma_{g,1}^{1/2}[\epsilon]$, Theorem \hyperref[theorem A]{A} is equivalent to certain vanishing lines in the homology of $\Rq/\sigma_{\epsilon}$ and $\Rq/(\sigma_{\epsilon},\theta)$.  

\subsection{Proof of Theorem \hyperref[theorem A]{A}}

\begin{proof}
In this case the standard connectivity estimate for $(\MCG,\rk)$ is proven in \cite[Theorem 3.4]{E2}. 
Thus, proceeding as in the proof of Theorem \hyperref[theorem B]{B} we can apply Theorem \ref{theorem stab 1} to $\mathbf{\Rq_{\mathbb{Z}}}$ to get part (i) of the Theorem. 

To prove part (ii) we consider $\mathbf{X}:=\mathbf{\Rq_{\mathbb{Z}[1/2]}}$ and apply Theorem \ref{theorem stab 2}. 
To verify assumptions (i),(ii) and (iii) we use the universal coefficients theorem and Theorems \ref{thm: 6.1}, \ref{thm: 6.2}, \ref{thm: 6.3} and \ref{thm: 6.4} in the Appendix.  

The secondary stability part follows by considering $\mathbf{X}:=\mathbf{\Rq_{\FF}}$ and applying Theorem \ref{thm stab 3}, where all assumptions needed hold by Theorems \ref{thm: 6.1}, \ref{thm: 6.2}, \ref{thm: 6.3} and \ref{thm: 6.4} in the Appendix.  
\end{proof}

As we said in Section \ref{section intro} we can also prove that the bound of Theorem \hyperref[theorem A]{A} is (almost) optimal. 

\begin{lemma}\label{lem optimallity}
For all $k \geq 1$ and for all $\epsilon, \delta \in \{0,1\}$ the map
$$\sigma_{\epsilon} \cdot -: H_{2k}(B\Gamma_{3k-1,1}^{1/2}[\delta-\epsilon];\mathbb{Q}) \rightarrow H_{2k}(B\Gamma_{3k,1}^{1/2}[\delta];\mathbb{Q})$$
is not surjective.
\end{lemma}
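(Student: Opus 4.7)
I would prove this by exhibiting, for each $k \geq 1$, an explicit non-vanishing class in $H_{(3k,\delta),2k}(\overline{\mathbf{R^q}}_{\mathbb{Q}}/\sigma_{\epsilon})$. By the cofibration long exact sequence $S^{(1,\epsilon),0} \otimes \overline{\mathbf{R^q}}_{\mathbb{Q}} \xrightarrow{\sigma_{\epsilon}\cdot-} \overline{\mathbf{R^q}}_{\mathbb{Q}} \to \overline{\mathbf{R^q}}_{\mathbb{Q}}/\sigma_{\epsilon}$, non-vanishing in the cofibre at bidegree $((3k,\delta),2k)$ is exactly the failure of surjectivity of the stabilization map at homological degree $2k$ and genus $3k$; so it suffices to work entirely at the level of the rationalized $E_2$-algebra.

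First I would locate candidate obstructions inside the universal model $\mathbf{A}_{\mathbb{Q}}$. Consider the computational spectral sequence constructed in Step~3 of the proof of Theorem~\ref{theorem stab 1}, specialised to $\mathds{k}=\mathbb{Q}$, converging to $H_{*,*}(\overline{\mathbf{A}}_{\mathbb{Q}}/\sigma_{\epsilon})$. Its $\mathcal{E}^2$-page has the form $\mathbb{Q}\{1,\sigma_{1-\epsilon}\} \otimes_{\mathbb{Q}} \Lambda(L/\mathbb{Q}\{\sigma_0,\sigma_1,\rho\})$, where $L$ is spanned by iterated Browder brackets in $\{\sigma_0,\sigma_1,\rho\}$ (in characteristic zero the Dyer--Lashof operations collapse to the bracket). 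The Browder bracket $[\sigma_0,\rho]$ has bidegree $((3,0),2)$, so slope exactly $2/3$, and since its homological degree is even its $k$-fold product $[\sigma_0,\rho]^k \in \Lambda$ is nonzero and lives in bidegree $((3k,0),2k)$. Multiplying by $\sigma_{1-\epsilon}$ or by a suitable power of $\sigma_{\epsilon}$-related classes produces analogous candidates at the other three sign combinations $(\delta,\epsilon)$.

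Next I would argue these classes actually detect obstructions. On the $\mathcal{E}^{\infty}$-page they persist because the computational differentials strictly lower filtration while preserving the slope structure from which the slope-$1/2$ vanishing of Theorem~\ref{theorem stab 1} was derived; classes on the line of slope exactly $2/3$ cannot be hit by differentials from regions that contribute only to slopes $\geq 2/3$ in lower bidegree. This gives nonzero classes in $H_{(3k,\delta),2k}(\overline{\mathbf{A}}_{\mathbb{Q}}/\sigma_{\epsilon})$. The remaining task is to check that their images under the universal comparison map $c:\mathbf{A}_{\mathbb{Q}} \to \mathbf{R^q}_{\mathbb{Q}}$ from Step~1 of that proof are still nonzero in $H_{(3k,\delta),2k}(\overline{\mathbf{R^q}}_{\mathbb{Q}}/\sigma_{\epsilon})$; this one does by pairing with an explicit rational cohomology class on $B\Gamma_{3k,1}^{1/2}[\delta]$, extracted from the spin Madsen--Tillmann--Weiss description of the stable rational cohomology in \cite{stablespin} (which is polynomial on spin Mumford--Morita--Miller classes) and chosen so that $[\sigma_0,\rho]^k$ maps, stably, to a nonzero product of such classes.

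\textbf{Main obstacle.} The subtle point is the last verification: $c_*$ is a-priori not injective outside the standard vanishing range for relative $E_2$-cells (which is slope $\geq 1/2$), and our classes sit at slope $2/3$ which is outside that range. One has to rule out that $c_*([\sigma_0,\rho]^k)$ could be killed in $H_{*,*}(\overline{\mathbf{R^q}}_{\mathbb{Q}}/\sigma_{\epsilon})$. The cleanest way to circumvent this is to identify the bracket $[\sigma_0,\rho]$ geometrically, as a secondary construction on pairs of spin surfaces realising the relation $\sigma_1^2=\sigma_0^2$, and then evaluate the resulting $2k$-dimensional homology cycle against the $k$-th power of the first spin Mumford--Morita--Miller class on $B\Gamma_{3k,1}^{1/2}[\delta]$. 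Showing this pairing is nontrivial amounts to a computation in the infinite loop space $\Omega^{\infty}_0 MT\mathrm{Spin}(2)$, and is the only place where genuinely new (non $E_2$-algebraic) input enters the argument.
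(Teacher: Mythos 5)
Your plan and the paper's actual proof diverge at the very start. The paper does not exhibit any explicit non-vanishing class in $H_{*,*}(\overline{\mathbf{R^q}}_{\mathbb{Q}}/\sigma_\epsilon)$, and in particular never has to touch the comparison map $c$ or the cell-attachment/computational spectral sequences at slope $2/3$. Instead it reduces the lemma to a known fact about the \emph{ordinary} mapping class group by a transfer argument: since $\Gamma_{3k,1}^{1/2}[\delta] \subset \Gamma_{3k,1}$ has finite index, the induced map $H_{2k}(B\Gamma_{3k,1}^{1/2}[\delta];\mathbb{Q}) \to H_{2k}(B\Gamma_{3k,1};\mathbb{Q})$ is surjective with $\mathbb{Q}$-coefficients. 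If the spin stabilization map were surjective in this bidegree, a chase through the commuting square would force the ordinary stabilization $\sigma\cdot- \colon H_{2k}(B\Gamma_{3k-1,1};\mathbb{Q}) \to H_{2k}(B\Gamma_{3k,1};\mathbb{Q})$ to be surjective as well, equivalently $H^{2k}(B\Gamma_{3k,1};\mathbb{Q}) \to H^{2k}(B\Gamma_{3k-1,1};\mathbb{Q})$ to be injective, which is known to fail from the computation with MMM classes in the proof of Corollary 5.8 of \cite{E2}.

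Your approach is genuinely different and, as you yourself acknowledge, incomplete. The gap you flag in your ``Main obstacle'' paragraph is in fact fatal in the form you present it: the universal comparison $c_*$ is only controlled below slope $1/2$ via \cite[Corollary 15.10]{Ek}, and the entire interest of the lemma lies at slope $2/3$, which is well outside that range. Thus showing $c_*([\sigma_0,\rho]^k)\neq 0$ is not a corollary of anything already proved in the paper; it really requires a new geometric identification of $[\sigma_0,\rho]$ on the spin moduli space together with a non-vanishing pairing against stable spin MMM classes, a computation you do not supply. There is also a subsidiary gap you elide: even on the $\mathbf{A}_{\mathbb{Q}}$ side, showing that $[\sigma_0,\rho]^k$ survives from the computational $\mathcal{E}^\infty$-page through the outer cell-attachment spectral sequence to a nonzero class in $H_{(3k,0),2k}(\overline{\mathbf{A}}_{\mathbb{Q}}/\sigma_\epsilon)$ is not automatic; the slope argument in Step 3 of Theorem \ref{theorem stab 1} only establishes a vanishing line, not non-vanishing at the boundary.

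The structural observation to take away is that the transfer converts an optimality question for a finite-index subgroup into an optimality question for the ambient group, where the answer is already known. This is precisely what makes the paper's proof short, and it is the step your outline misses. If you were to persist with your approach, you would essentially be re-proving the non-optimality for the ordinary MCG (via stable rational cohomology) \emph{and} additionally controlling the spin-to-ordinary comparison at an uncontrolled slope, so you would do strictly more work with no payoff.
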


\begin{proof}
Suppose for a contradiction that it was surjective for some $k \geq 1$, $\epsilon, \delta \in \{0,1\}$.
By the transfer, $H_{2k}(B\Gamma_{3k,1}^{1/2}[\delta];\mathbb{Q}) \rightarrow H_{2k}(B\Gamma_{3k,1};\mathbb{Q})$ is also surjective since the spin mapping class groups are fine index subgroups of the mapping class groups. 

Thus, the stabilisation map $\sigma \cdot -:H_{2k}(B\Gamma_{3k-1,1};\mathbb{Q}) \rightarrow H_{2k}(B\Gamma_{3k,1};\mathbb{Q})$ (where we are using the notation of \cite{E2}) must be surjective. 
By the universal coefficients theorem, $H^{2k}(B\Gamma_{3k,1};\mathbb{Q}) \rightarrow H^{2k}(B\Gamma_{3k-1,1};\mathbb{Q})$ is injective; which is false by the computations in \cite[Proof of Corollary 5.8]{E2}. 
\end{proof}

Thus, the stability bound obtained in Theorem \hyperref[theorem A]{A} is optimal up to an additive constant of at most $4/3$.

\section{Appendix} \label{appendix}

\subsection{Spin mapping class groups} \label{appendix mcg}

In this section we will explain the homology computations of spin mapping class groups and quadratic symplectic groups. 
These computations are done using GAP and we have included the code that we used. 

\subsubsection{$g=1$} \label{genus 1}

Consider the simple closed curves $\alpha, \beta$ in $\Sigma_{1,1}$ shown below, which orientations chosen so that $\alpha \cdot \beta = +1$.
Let $a, b \in \Gamma_{1,1}$ be the isotopy classes represented by the right-handed Dehn twists along the curves $\alpha$ and $\beta$ respectively. 

\begin{figure}[H]
    \begin{center}
        \begin{tikzpicture}[scale=1.2, decoration={
                markings,
                mark=at position 0.5 with {\arrow{<}}}
            ] 
		\draw (0,0) -- (1,1);
		\draw (1,1) -- (3,1) -- (2,0) -- (0,0);
		\churro[scale=1, x=1, y=0.45]
		\draw[red,dashed] (1.5,1.85) node[above] {$\alpha$}
		    to[in=120,out=-120] (1.5,1.6);
		\draw[red,postaction={decorate}] (1.5,1.85) to[in=-30,out=30] (1.5,1.6);
		\draw[blue,->] (1.5,1.25) node[below] {$\beta$}
		    to[in=-45,out=0] (1.75,1.6)
		    to[in=0,out=90+45] (1.5,1.7)
		    to[in=45,out=180] (1.25,1.6)
		    to[in=180,out=-90-45]  (1.5,1.25);
    \end{tikzpicture}
    \end{center}
\end{figure}

The set $Q(1)$ of quadratic refinements of $H_1(\Sigma_{1,1};\mathbb{Z})$ is $\{q_{0,0}, q_{1,0}, q_{0,1}, q_{1,1}\}$, where $q_{i,j}$ satisfies $q_{i,j}(a)=i$ and $q_{i,j}(b)=j$. 
The first three of them have Arf invariant $0$ and the forth one has Arf invariant $1$. 

Thus, we can get explicit models of $\Gamma_{1,1}^{1/2}[\epsilon]$ via $\Gamma_{1,1}^{1/2}[0]:=\Stab_{\Gamma_{1,1}}(q_{0,0})$ and $\Gamma_{1,1}^{1/2}[1]:=\Stab_{\Gamma_{1,1}}(q_{1,1})$. 

\begin{theorem}\label{thm: 6.1}
\begin{enumerate}[(i)]
    \item $H_1(\Gamma_{1,1};\mathbb{Z})=\mathbb{Z}\{\tau\}$, where $\tau$ is represented by both $a$ and $b$. 
    \item $H_1(\Gamma_{1,1}^{1/2}[0];\mathbb{Z})=\mathbb{Z}\{x\} \oplus \mathbb{Z}\{y\}$, where $x$ is represented by $a^{-2}$ and $y$ is represented by $a b a^{-1}$. 
    Moreover, $b^{-2}$ also represents the class $x$. 
    \item $H_1(\Gamma_{1,1}^{1/2}[1];\mathbb{Z})=\mathbb{Z}\{z\}$, where $z$ is represented by $a$. 
    \item Under the inclusion $\Gamma_{1,1}^{1/2}[0] \subset \Gamma_{1,1}$ we have $x \mapsto -2 \tau$ and $y \mapsto \tau$.
    \item Under the inclusion $\Gamma_{1,1}^{1/2}[1] \subset \Gamma_{1,1}$ we have $z \mapsto \tau$.
\end{enumerate}
\end{theorem}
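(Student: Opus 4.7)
Part (i) is a classical computation: $\Gamma_{1,1}$ admits the presentation $\langle a, b \mid aba = bab\rangle$ (it is the three-strand braid group, with the braid relation coming from the lantern/chain relation for Dehn twists along $\alpha$ and $\beta$). The sole relation collapses to $[a] = [b]$ under abelianisation, so $H_1(\Gamma_{1,1};\mathbb{Z}) \cong \mathbb{Z}$, generated by the common class $\tau$.

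For (iii) and (v), my plan is to compute the $\Gamma_{1,1}$-action on $Q(1) = \{q_{0,0}, q_{0,1}, q_{1,0}, q_{1,1}\}$ explicitly. Using that a right-handed Dehn twist $T_{\gamma}$ acts on $H_1(\Sigma_{1,1};\mathbb{Z})$ by $v \mapsto v + (v\cdot \gamma)\gamma$ together with the defining property $q(v+w) \equiv q(v)+q(w)+v\cdot w \pmod{2}$ of a quadratic refinement, one finds
\[
a\colon q_{i,j} \longmapsto q_{i,\, i+j+1}, \qquad b\colon q_{i,j} \longmapsto q_{i+j+1,\, j}.
\]
In particular, both generators fix $q_{1,1}$, so $\Gamma_{1,1}^{1/2}[1] = \Gamma_{1,1}$; part (iii) then reduces to (i) and the image $z \mapsto \tau$ in (v) is immediate from $z = [a]$.

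For (ii) and (iv), the same formulas show that $a$ and $b$ act on $\{q_{0,0}, q_{0,1}, q_{1,0}\}$ as the transpositions $(q_{0,0}\, q_{0,1})$ and $(q_{0,0}\, q_{1,0})$, so $\Gamma_{1,1}^{1/2}[0] = \Stab_{\Gamma_{1,1}}(q_{0,0})$ has index $3$. I would run the Reidemeister--Schreier process with Schreier transversal $\{1, a, b\}$: of the six pairs (transversal, generator), two are trivial and the remaining four give generators $x := a^2$, $y := aba^{-1}$, $z := bab^{-1}$, $w := b^2$. Rewriting the three transversal-conjugates of the single relator $abab^{-1}a^{-1}b^{-1}$ in these generators yields
\[
yx = zw, \qquad xz = yx, \qquad zw = wy.
\]
Abelianising, the last two relations force $z = y$ and, combined with the first, $w = x$, with no further constraints. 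Hence $H_1(\Gamma_{1,1}^{1/2}[0];\mathbb{Z}) \cong \mathbb{Z}\{x\} \oplus \mathbb{Z}\{y\}$, and since $w = x$ the classes $[a^{-2}]$ and $[b^{-2}]$ coincide, establishing (ii). Finally, (iv) follows by abelianising each generator word back in $\Gamma_{1,1}$: $x = [a^{-2}] \mapsto -2\tau$ and $y = [aba^{-1}] \mapsto \tau$.

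The only nontrivial step is the Reidemeister--Schreier computation of (ii), but because the index is $3$ and there is a single defining relator of length $6$, the bookkeeping fits comfortably on a page; alternatively, as flagged in the appendix, it can be checked via GAP, which becomes essential for the higher-genus cases but is overkill here.
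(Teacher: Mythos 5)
Your proof is correct, and it follows the same overall strategy as the paper (fix the one-relator presentation $\Gamma_{1,1}=\langle a,b\mid aba=bab\rangle$, compute the permutation action of $a,b$ on the three Arf-$0$ refinements, identify $\Gamma_{1,1}^{1/2}[0]$ as an index-$3$ stabilizer, and abelianize), but it replaces the paper's GAP session with a hand Reidemeister--Schreier computation. That is a genuinely more self-contained route for this low-genus case: the paper leans on GAP to extract the subgroup presentation (via \texttt{IsomorphismFpGroupByGenerators} and \texttt{MaximalAbelianQuotient}), whereas you exhibit the Schreier transversal $\{1,a,b\}$, the four Schreier generators $a^2,\,aba^{-1},\,bab^{-1},\,b^2$, and the three rewritten relators explicitly, and then abelianize by hand. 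I checked the rewritings: starting the relator $abab^{-1}a^{-1}b^{-1}$ at the transversal elements $1,a,b$ yields $yxw^{-1}z^{-1}$, $xzx^{-1}y^{-1}$, and $zwy^{-1}w^{-1}$, which are exactly your $yx=zw$, $xz=yx$, $zw=wy$, and the abelianization kills $z-y$ and $w-x$, giving $\mathbb{Z}^2$. Your closed formulas $a\colon q_{i,j}\mapsto q_{i,\,i+j+1}$ and $b\colon q_{i,j}\mapsto q_{i+j+1,\,j}$ also agree with the six values listed in the paper's proof. Two small cosmetic points: your Schreier generator is $a^2$, while the theorem's convention is $x=a^{-2}$ (the two differ by a sign on the $\mathbb{Z}$-summand, and you silently switch to $a^{-2}$ in the last line, which is fine but worth flagging); and the braid relation $aba=bab$ for two transversally-intersecting Dehn twists is not usually called the ``lantern/chain relation'' --- the lantern relation is a different $7$-curve relation --- so that parenthetical attribution should be dropped.
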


\begin{proof}
Parts (i), (ii) and (iii) immediately imply parts (iv) and (v). 
Moreover, parts (i) and (iii) are equivalent since there is a unique quadratic refinement of Arf invariant $1$ so $\Gamma_{1,1}^{1/2}[1]=\Gamma_{1,1}$.

By \cite[Page 8]{korkmaz} we have the presentation 
$\Gamma_{1,1}=\langle a, b | \; a b a = b a b \rangle$.

Abelianizing this presentation gives part (i). 
We will prove (ii) by finding a presentation for $\Gamma_{1,1}^{1/2}[0]$ and then abelianizing it using GAP. 

The right action of $a, b$ on the set of quadratic refinements of Arf invariant $0$ is given by: 
$a^*(q_{0,0})= q_{0,1}$, $a^*(q_{0,1})=q_{0,0}$, $a^*(q_{1,0})=q_{1,0}$, $b^*(q_{0,0})= q_{1,0}$, $b^*(q_{0,1})=q_{0,1}$, $b^*(q_{1,0})=q_{0,0}$. 
(This is shown by analysing the effect on homology of the corresponding right-handed Dehn twists.)

We will denote $q_{0,0}:=1$, $q_{0,1}:=2$ and $q_{1,0}:=3$, so that $a$ acts on $\{1,2,3\}$ by the permutation $(1 2)$ and $b$ acts on $\{1,2,3\}$ by the permutation $(1 3)$. 

\begin{verbatim}
gap> F:=FreeGroup("a","b");
gap> AssignGeneratorVariables(F);
gap> rel:=[a*b*a*b^-1*a^-1*b^-1];
gap> G:=F/rel;
% This defines group G=\Gamma_{1,1}
gap>  Q:=Group((1,2),(1,3));
gap> hom:=GroupHomomorphismByImages
(G,Q,GeneratorsOfGroup(G),GeneratorsOfGroup(Q));
[ a, b ] -> [ (1,2), (1,3) ]
% permutation representation of G on the elements of Q(1)
of Arf invariant 0. 
gap> S:=PreImage(hom,Stabilizer(Q,1));
% S is the spin mapping class group \Gamma_{1,1}{1/2}[0]
gap> genS:=GeneratorsOfGroup(S);
[ a^-2, b^-2, a*b*a^-1 ] % explicit generators for S.
gap> iso:= IsomorphismFpGroupByGenerators(S,genS);
[ a^-2, b^-2, a*b*a^-1 ] -> [ F1, F2, F3 ]
gap> s:=ImagesSource(iso);
% explicit fp group isomorphic to S via iso, 
so that the generators correspond via iso. 
gap> RelatorsOfFpGroup(s);
[ F3*F1*F3^-1*F2^-1, F3*F2*F3^-1*F2*F1^-1*F2^-1 ] 
% explicit relations for the group S. 
Thus we have a presentation of S. 
gap> AbelianInvariants(s);
[ 0, 0 ] 
% Thus, the abelianization of S is isomorphic to Z \oplus Z. 
gap>  q:=MaximalAbelianQuotient(s);
gap> AbS:=ImagesSource(q);
gap> GeneratorsOfGroup(AbS);
[ f1, f2, f3 ]
gap> RelatorsOfFpGroup(AbS);
[ f1^-1*f2^-1*f1*f2, f1^-1*f3^-1*f1*f3, f2^-1*f3^-1*f2*f3, f1 ]
% Description of the abelianization AbS of S as a f.p. group. 
Thus, f2 and f3 are the free generators of AbS. 
gap> Image(q,s.1);
f2 
% Thus, s.1 corresponds to f2 under abelianization. 
% i.e. a^-2 is one generator.
gap> Image(q,s.3);
f3
% Thus, s.3 corresponds to f3 under abelianization.
% i.e. aba^-1 is the other generator. 
gap> Image(q,s.2)=Image(q,s.1);
true 
% Thus, b^-2 and a^-2 agree in the abelianization. 

\end{verbatim}
\end{proof}

\subsubsection{$g=2$}
Consider the simple closed curves $\alpha_1,\beta_1,\alpha_2,\beta_2$, $\epsilon$ as drawn below, and the corresponding right-handed Dehn twists along them, denoted by 
$a_1, a_2, b_1, b_2, e \in \Gamma_{2,1}$ respectively. 

\begin{figure}[H]
    \begin{center}
        \begin{tikzpicture}[scale=1, decoration={
                markings,
                mark=at position 0.5 with {\arrow{<}}}
            ] 
		\draw (0,0) -- (1,1);
		\draw (1,1) -- (5,1) -- (4,0) -- (0,0);
		\churro[x=1, y=0.45]
		\draw[red,dashed] (1.5,1.85) node[above] {$\alpha_1$}
		    to[in=120,out=-120] (1.5,1.6);
		\draw[red,postaction={decorate}] (1.5,1.85) to[in=-30,out=30] (1.5,1.6);
		\draw[blue,->] (1.5,1.25) node[below] {$\beta_1$}
		    to[in=-45,out=0] (1.7,1.6)
		    to[in=0,out=90+45] (1.5,1.7)
		    to[in=45,out=180] (1.3,1.6)
		    to[in=180,out=-90-45]  (1.5,1.25);

		\churro[x=3, y=0.45]
		\draw[red,dashed] (3.5,1.85) node[above] {$\alpha_2$}
		    to[in=120,out=-120] (3.5,1.6);
		    \begin{scope}[red,decoration={
                markings,
                mark=at position 0.4 with {\arrow{<}}}]
		    \draw[postaction={decorate}] (3.5,1.85) to[in=-30,out=30] (3.5,1.6);
		    \end{scope}
		\draw[blue,->] (3.5,1.25) node[below] {$\beta_2$}
		    to[in=-45,out=0] (3.7,1.6)
		    to[in=0,out=90+45] (3.5,1.7)
		    to[in=45,out=180] (3.3,1.6)
		    to[in=180,out=-90-45]  (3.5,1.25);

		\draw[blue!40!red] (2.5,0.25) node[below] {$\varepsilon$}
		    to[out=0,in=-90]  (3.35,0.75)
		    to[out=90,in=-70]  (3.2,1.3)
		    to[out=110,in=180]  (3.5,1.75)
		    to[out=0,in=70]  (3.8,1.3)
		    to[out=-110,in=90]  (3.65,0.75)
		    to[out=-90,in=180]  (4,0.35)
		    to[out=0,in=10]  (3.7,0.75)
		    ;
		\draw[blue!40!red,dashed] (3.7,0.75)
		    to[out=180+10,in=-10]  (3.3,0.75)
		    ;
		\draw[blue!40!red] (3.3,0.75)
		    to[out=180-10,in=10]  (2.25,0.4)
		    to[out=180+10,in=0]  (2,0.35)
		    ;
		    
		\draw[blue!40!red] (1.35,0.75)
		    to[out=90,in=-70]  (1.2,1.3)
		    to[out=110,in=180]  (1.5,1.75)
		    to[out=0,in=70]  (1.8,1.3)
		    to[out=-110,in=90]  (1.65,0.75)
		    to[out=-90,in=180]  (2,0.35);
		    
		\draw[blue!40!red,->] (1.35,0.75)
		    to[out=-90,in=180]  (2.5,0.25)
		    ;		    		    
    \end{tikzpicture}
    \end{center}
\end{figure}

The set of quadratic refinements $Q(2)$ is $\{q_{i_1,j_1,i_2,j_2}: \; i_1,j_1,i_2,j_2 \in \{0,1\}\}$, where $q=q_{i_1,j_1,i_2,j_2}$ satisfies $q(\alpha_1)=i_1$,
 $q(\alpha_2)=i_2$, $q(\beta_1)=j_1$ and $q(\beta_2)=j_2$. 

Now we fix models of $\Gamma_{2,1}^{1/2}[\epsilon]$ via $\Gamma_{2,1}^{1/2}[0]:=\Stab_{\Gamma_{2,1}}(q_{0,0,0,0})$ and $\Gamma_{2,1}^{1/2}[1]:=\Stab_{\Gamma_{2,1}}(q_{1,0,1,1})$.

\begin{theorem}\label{thm: 6.2}
\begin{enumerate}[(i)]

    \item $H_1(\Gamma_{2,1};\mathbb{Z})=\mathbb{Z}/10\{\sigma \cdot\tau\}$, and $\sigma \cdot \tau$ is represented by $a_1$. 
    \item $H_1(\Gamma_{2,1}^{1/2}[0];\mathbb{Z})=\mathbb{Z}\{A\} \oplus \mathbb{Z}/2\{B\}$, where $A$ is represented by $a_1 b_1 a_1^{-1} b_1 b_2 e^{-1}$ and $B$ is represented by $(a_1 b_1 a_1)^2 e b_2^{-1} b_1^{-1}$. 
    \item $H_1(\Gamma_{2,1}^{1/2}[1];\mathbb{Z})=\mathbb{Z}/80\{C\}$, where $C$ is represented by $a_1$. 
    \item Under the inclusion $\Gamma_{2,1}^{1/2}[0] \subset \Gamma_{2,1}$ we have $A \mapsto 2 \sigma \cdot\tau$ and $B \mapsto 5 \sigma \cdot\tau$.
    \item Under the inclusion $\Gamma_{2,1}^{1/2}[1] \subset \Gamma_{2,1}$ we have $C \mapsto \sigma \cdot\tau$.
\end{enumerate}
\end{theorem}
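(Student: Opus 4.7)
The plan is to repeat the GAP-assisted procedure used for genus one in Theorem \ref{thm: 6.1}, adapted to $g=2$. I would start from a finite presentation of $\Gamma_{2,1}$ in the five Dehn twists $a_1, b_1, a_2, b_2, e$, such as Wajnryb's presentation. Abelianising this presentation directly yields $H_1(\Gamma_{2,1};\mathbb{Z}) = \mathbb{Z}/10$, generated by the common image $\sigma\cdot\tau$ of all non-separating Dehn twists (since these twists are pairwise conjugate in $\Gamma_{2,1}$); in particular $a_1$ represents a generator, giving (i).

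Next I would encode the right action of $\Gamma_{2,1}$ on the sixteen-element set $Q(2)$. The Picard--Lefschetz formula for Dehn twists translates via Johnson's quadratic-refinement formula to
$$(q\cdot T_\gamma)(x) \equiv q(x) + (x\cdot [\gamma])(q([\gamma])+1) \pmod{2},$$
so each of $a_1, b_1, a_2, b_2$ acts by an explicit involution on $Q(2)$, while $e$ acts trivially because $\epsilon$ is separating and hence nullhomologous. The Arf invariant partitions $Q(2)$ into the expected orbits of sizes $2^{g-1}(2^g+1) = 10$ and $2^{g-1}(2^g-1) = 6$, so $\Gamma_{2,1}^{1/2}[0]$ and $\Gamma_{2,1}^{1/2}[1]$ arise as the point stabilisers at $q_{0,0,0,0}$ and $q_{1,0,1,1}$, of indices $10$ and $6$ respectively, matching the choices in the statement.

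Feeding this permutation representation into GAP, the calls \texttt{Stabilizer}, \texttt{PreImage}, \texttt{IsomorphismFpGroupByGenerators}, \texttt{AbelianInvariants} and \texttt{MaximalAbelianQuotient}---applied exactly as in the code recorded in the proof of Theorem \ref{thm: 6.1}---will output finite presentations of the two spin subgroups together with their abelianisations. This produces the module structures $\mathbb{Z}\oplus \mathbb{Z}/2$ of (ii) and $\mathbb{Z}/80$ of (iii). To pin down the specific representatives $A$, $B$, $C$, one substitutes the words given in the statement into the Schreier rewriting returned by GAP and compares with the basis chosen by \texttt{MaximalAbelianQuotient}. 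Parts (iv) and (v) then follow by pushing $A$, $B$, $C$ forward along the inclusion $\Gamma_{2,1}^{1/2}[\epsilon] \hookrightarrow \Gamma_{2,1}$ and reducing modulo $10$: counting the number of non-separating twist letters (with signs) in each of the chosen words gives the stated multiples $2\sigma\cdot\tau$, $5\sigma\cdot\tau$, and $\sigma\cdot\tau$.

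The main obstacle is the bookkeeping: one must fix an ordered symplectic basis of $H_1(\Sigma_{2,1};\mathbb{Z})$ adapted to $\{\alpha_1,\beta_1,\alpha_2,\beta_2\}$, compute the induced permutations of $Q(2)$ under $a_1, b_1, a_2, b_2, e$, and verify that these permutations satisfy every Wajnryb relation (each must act as the identity on $Q(2)$). Once this alignment is in place, the remainder of the argument is mechanical, and the agreement between the two independent outputs---the orders $10, 2, 80$ produced by \texttt{AbelianInvariants} and the images of $A, B, C$ under the inclusion map into $\mathbb{Z}/10$---provides a strong internal consistency check.
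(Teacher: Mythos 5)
Your strategy is essentially the one the paper follows (Wajnryb presentation, explicit permutation action on $Q(2)$, GAP-assisted stabiliser and abelianisation computation, then pushing forward to $\mathbb{Z}/10$), but there is a genuine error in your description of the action: you assert that $e$ acts trivially on $Q(2)$ because $\epsilon$ is ``separating and hence nullhomologous.'' That is false. In Wajnryb's presentation, $e$ commutes with $b_1$ and $b_2$ and satisfies the braid relation with $a_1$ and $a_2$; the corresponding curve $\epsilon$ therefore meets $\alpha_1$ and $\alpha_2$ once each and is disjoint from $\beta_1,\beta_2$, so $[\epsilon]=\pm\beta_1\pm\beta_2\neq 0$ in $H_1(\Sigma_{2,1};\mathbb{Z})$. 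It is a non-separating curve. Applying your own quoted formula with $q=q_{0,0,0,0}$ gives $q([\epsilon])=0$, hence $q\cdot T_\epsilon\neq q$; concretely $e$ sends $q_{0,0,0,0}$ to $q_{1,0,1,0}$, which is exactly the permutation $(1,6)(3,4)(9,10)$ that the paper feeds into GAP. With the trivial action for $e$ you would compute a wrong stabiliser (one incorrectly containing $e$) and get a wrong group and wrong abelian invariants, so this is not a cosmetic slip but a step that would fail as written.

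Aside from that, the rest of the outline matches the paper. The orbit-size count $10+6$ is right, the choices of base point $q_{0,0,0,0}$ and $q_{1,0,1,1}$ are the ones the paper uses, and your ``count signed non-separating twist letters in the words'' method for parts (iv) and (v) is exactly what ``using the relations in the abelianization of $\Gamma_{2,1}$'' amounts to: $A=a_1b_1a_1^{-1}b_1b_2e^{-1}\mapsto 2$, $B=(a_1b_1a_1)^2eb_2^{-1}b_1^{-1}\mapsto 5$, $C=a_1\mapsto 1$ in $\mathbb{Z}/10$, precisely because every generator (including $e$) is a Dehn twist about a non-separating curve and hence maps to $\sigma\cdot\tau$. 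So once the action of $e$ is corrected, the argument goes through as the paper's does.
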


\begin{proof}

We say that the pair $(u,v)$ satisfies the \textit{braid relation} if $u v u= v u v$. 
By \cite[Theorem 2]{wajnryb} there is a presentation 
$$\Gamma_{2,1}=\langle a_1,a_2,b_1,b_2,e | R_1 \sqcup R_2 \sqcup \{(b_1 a_1 e a_2)^5=b_2 a_2 e a_1 b_1^2 a_1 e a_2 b_2\} \rangle,$$ 
where $R_1$ says that $(a_1,b_1)$,$(a_2,b_2)$,$(a_1,e)$,$(a_2,e)$ satisfy the braid relation, and $R_2$ says that each of $(a_1,a_2)$,$(b_1, b_2)$,$(a_1,b_2)$,$(a_2,b_1)$,$(b_1,e)$,$(b_2,e)$ commutes.

Part (i) follows from abelianizing the above presentation of $\Gamma_{2,1}$, and it is compatible with \cite[Lemma 3.6]{E2}.

For part (ii) we will describe the (right) action of $a_1,b_1,a_2,b_2,e$ on the set of $10$ quadratic refinements of Arf invariant $0$, which we will label as $q_{0,0,0,0}:=1$, $q_{0,0,0,1}:=2$, $q_{1,0,0,0}:=3$, $q_{0,0,1,0}:=4$, $q_{1,0,0,1}:=5$, $q_{1,0,1,0}:=6$, $q_{0,1,0,0}:=7$, $q_{0,1,1,0}:=8$, $q_{0,1,0,1}:=9$, $q_{1,1,1,1}:=10$.
The explicit action of each generator as a permutation in $S_{10}$ can be found in the GAP computation below. 
We use GAP to find presentation of $\Gamma_{2,1}^{1/2}[0]$ and its first homology group as follows.
\begin{verbatim}
gap>   F:=FreeGroup("a","x","b","y","c");
% a means a1, x means a2, b means b1, y means a2, c means e. 
gap> AssignGeneratorVariables(F);
gap> rel:= [ a*b*a*b^-1*a^-1*b^-1,  x*y*x*y^-1*x^-1*y^-1, 
a*c*a*c^-1*a^-1*c^-1,  x*c*x*c^-1*x^-1*c^-1,  a*x*a^-1*x^-1, 
a*y*a^-1*y^-1, b*x*b^-1*x^-1, b*y*b^-1*y^-1, b*c*b^-1*c^-1,
y*c*y^-1*c^-1, (y*x*c*a*b^2*a*c*x*y)*(b*a*c*x)^-5];
gap> G:=F/rel;
% This defines group G=\Gamma_{2,1}
gap> Q:=Group((1,7)(2,9)(4,8),(1,2)(3,5)(7,9),(1,3)(2,5)(4,6),
(1,4)(3,6)(7,8),(1,6)(3,4)(9,10));
gap> hom:=GroupHomomorphismByImages
(G,Q,GeneratorsOfGroup(G),GeneratorsOfGroup(Q));
[ a, x, b, y, c ] -> [ (1,7)(2,9)(4,8), (1,2)(3,5)(7,9),
(1,3)(2,5)(4,6), (1,4)(3,6)(7,8), (1,6)(3,4)(9,10) ]
% permutation representation of G on the elements of Q(2)
of Arf invariant 0. 
gap> S:=PreImage(hom,Stabilizer(Q,1));
% This is the spin mapping class group \Gamma_{2,1}{1/2}[0]
gap> genS:=GeneratorsOfGroup(S);
[ a^-2, x^-2, b^-2, y^-2, c^-2, a*b*a^-1, a*c*a^-1, x*y*x^-1,
x*c*x^-1, b*y*c^-1 ] 
% Generators for S: there are 10 of them called s.1,...,s.10.
gap> iso:= IsomorphismFpGroupByGenerators(S,genS);
gap> s:=ImagesSource(iso);
<fp group on the generators [ F1, F2, F3, F4, F5, F6, F7, F8,
F9, F10 ]>
gap> AbelianInvariants(s);
[ 0, 2 ] 
% Thus, the abelianization of S is isomorphic to Z \oplus Z/2. 
gap> q:=MaximalAbelianQuotient(s);
gap> AbS:=ImagesSource(q);
gap> GeneratorsOfGroup(AbS);
[ f1, f2, f3, f4, f5, f6, f7, f8, f9, f10 ]
% this gives a description of the abelianization AbS of S. 
gap> RelatorsOfFpGroup(AbS);
[ f1^-1*f2^-1*f1*f2, f1^-1*f3^-1*f1*f3, f1^-1*f4^-1*f1*f4,
f1^-1*f5^-1*f1*f5, f1^-1*f6^-1*f1*f6, f1^-1*f7^-1*f1*f7,
f1^-1*f8^-1*f1*f8, f1^-1*f9^-1*f1*f9, f1^-1*f10^-1*f1*f10,
  f2^-1*f3^-1*f2*f3, f2^-1*f4^-1*f2*f4, f2^-1*f5^-1*f2*f5,
  f2^-1*f6^-1*f2*f6, f2^-1*f7^-1*f2*f7, f2^-1*f8^-1*f2*f8,
  f2^-1*f9^-1*f2*f9, f2^-1*f10^-1*f2*f10, f3^-1*f4^-1*f3*f4,
  f3^-1*f5^-1*f3*f5, f3^-1*f6^-1*f3*f6, f3^-1*f7^-1*f3*f7,
  f3^-1*f8^-1*f3*f8, f3^-1*f9^-1*f3*f9, f3^-1*f10^-1*f3*f10,
  f4^-1*f5^-1*f4*f5, f4^-1*f6^-1*f4*f6, f4^-1*f7^-1*f4*f7,
  f4^-1*f8^-1*f4*f8, f4^-1*f9^-1*f4*f9, f4^-1*f10^-1*f4*f10,
  f5^-1*f6^-1*f5*f6, f5^-1*f7^-1*f5*f7, f5^-1*f8^-1*f5*f8,
  f5^-1*f9^-1*f5*f9, f5^-1*f10^-1*f5*f10, f6^-1*f7^-1*f6*f7,
  f6^-1*f8^-1*f6*f8, f6^-1*f9^-1*f6*f9, f6^-1*f10^-1*f6*f10,
  f7^-1*f8^-1*f7*f8, f7^-1*f9^-1*f7*f9, f7^-1*f10^-1*f7*f10,
  f8^-1*f9^-1*f8*f9, f8^-1*f10^-1*f8*f10, f9^-1*f10^-1*f9*f10,
  f1, f2, f3, f4, f5, f6, f7, f8, f9^2 ]
% Thus, f9 generates the Z/2 and f10 generates the Z. 
gap> PreImagesRepresentative(q,AbS.9);
(F9*F5^-1)^2*F10^-1
gap> Image(q,(s.9*s.5^-1)^2*s.10^-1)=AbS.9;
true 
% Element of S generating the Z/2 summand, where the Fi index 
the ten generators of S in order. 
F9 and F6 agree on abelianization, and so do F5 and F1. 
Thus, we can replace this generator by (s.6*s.1^-1)^2*s.10^-1, 
which gives B by substituting what s.1,s.6 and s.10 are.
gap> Image(q,s.5^-1*s.10^-1*s.9)=AbS.10;
true
% Generator of the Z summand. This gives A.
\end{verbatim}

Part (iii) is done similarly to part (ii): 
there are 6 quadratic refinements of Arf invariant 1, which we index as: 
$q_{1,0,1,1}:=1$, $q_{0,0,1,1}:=2$, $q_{0,1,1,1}:=3$, $q_{1,1,0,1}:=4$, $q_{1,1,0,0}:=5$, $q_{1,1,1,0}:=6$.
The explicit action of each generator as a permutation on $S_6$ can be found in the GAP computation below. 
The group $G$ in the computation represents $\Gamma_{2,1}$ and it is input in the same way as above. 
\begin{verbatim}
gap> Q:=Group((2,3),(4,5),(1,2),(5,6),(3,4)); 
gap> hom:=GroupHomomorphismByImages
(G,Q,GeneratorsOfGroup(G),GeneratorsOfGroup(Q));
[ a, x, b, y, c ] -> [ (2,3), (4,5), (1,2), (5,6), (3,4) ]
% permutation representation of G on the elements
of Q(2) of Arf invariant 1
gap> S:=PreImage(hom,Stabilizer(Q,1));
% This is the spin mapping class group \Gamma_{2,1}{1/2}[1]
gap> genS:=GeneratorsOfGroup(S);
[ a, x, b^-2, y, c ]
gap> iso:= IsomorphismFpGroupByGenerators(S,genS);
gap> s:=ImagesSource(iso);
gap> AbelianInvariants(s);
[ 5, 16 ] % this shows that the abelianization of s is
isomorphic to Z/5 \oplus Z/16 \cong Z/80. 
gap> q:=MaximalAbelianQuotient(s);
gap> AbS:=ImagesSource(q);
gap> Image(q,s.1);
f1
gap> Image(q,s.1)=AbS.5;
true 
gap> Order(AbS.5);
80 
% This gives the required class $C$. 

\end{verbatim}

Finally, parts (iv) and (v) follow from the explicit description of $A,B,C$ plus using the relations in the abelianization of $\Gamma_{2,1}$. 
\end{proof}

\subsubsection{Stabilizations, Browder brackets and the $Q_{\mathbb{Z}}^1(-)$-operation}

\begin{theorem}\label{thm: 6.3}
\begin{enumerate}[(i)]
    \item $[\sigma,\sigma]=4 \sigma \cdot \tau$. 
    \item $Q_{\mathbb{Z}}^1(\sigma)=3 \sigma \cdot \tau$. 
    \item $x \cdot \sigma_0 = 4 A$ and $y \cdot \sigma_0= 3 A + B$. 
    \item $x \cdot \sigma_1 = 28 C$ and $y \cdot \sigma_1 = C$. 
    \item $z \cdot \sigma_0= C$.
    \item $z \cdot \sigma_1= 3 A+ B$. 
    \item $[\sigma_0,\sigma_0]=-8 A$, $[\sigma_1,\sigma_1]=72 A$ and $[\sigma_0,\sigma_1]= 24 C$. 
    \item $Q_{\mathbb{Z}}^1(\sigma_0)=4A+B$ and $Q_{\mathbb{Z}}^1(\sigma_1)=-36A+B$. 
\end{enumerate}
\end{theorem}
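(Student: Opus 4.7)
The plan is to combine the explicit presentations obtained in Theorems \ref{thm: 6.1} and \ref{thm: 6.2}, the construction of $\R$ and $\Rq$ as $E_2$-algebras coming from braided monoidal groupoids (which endows products, Browder brackets, and squared braidings with geometric meaning), and direct GAP computations of the relevant abelianization maps.

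First I would tackle parts (iii)--(vi). The product $\alpha \cdot \sigma_\epsilon$ in $H_{(2,\delta),1}(\Rq)$ is, by construction of $\Rq$ from the braided monoidal groupoid $\mathsf{MCG^q}$, induced on $H_1$ by the boundary-connected-sum inclusion $\Gamma_{1,1}^{1/2}[\epsilon'] \hookrightarrow \Gamma_{2,1}^{1/2}[\epsilon + \epsilon']$ applied to representative loops. The generators $x, y \in H_1(\Gamma_{1,1}^{1/2}[0])$ and $z \in H_1(\Gamma_{1,1}^{1/2}[1])$ are represented by the explicit Dehn-twist words $a^{-2}$, $a b a^{-1}$, and $a$ from Theorem \ref{thm: 6.1}, and the inclusion sends these to the same words in the letters $a_1, b_1$ viewed in $\Gamma_{2,1}^{1/2}[\delta]$. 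Expressing these images in terms of $A, B, C$ then amounts to continuing the GAP scripts of Theorem \ref{thm: 6.2}, applying the abelianization quotients already constructed there.

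Next, for parts (i) and (vii), I would use that in an $E_2$-algebra coming from a braided monoidal groupoid the Browder brackets $[\sigma,\sigma]$ and $[\sigma_\epsilon, \sigma_{\epsilon'}]$ are represented by the squared braiding $\beta_{1,1}^2$ sitting in the appropriate genus-$2$ stabilizer. Geometrically this is the Dehn twist $T_c$ along the separating curve $c \subset \Sigma_{2,1}$ bounding the first glued-in genus-$1$ piece; via the chain relation $(T_{a_1}T_{b_1})^6 = T_c$ one can write this as an explicit word in the Wajnryb generators and then feed it into the GAP abelianization to recover the stated coefficients $4\sigma \cdot \tau$, $-8A$, $72A$, and $24C$.

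Parts (ii) and (viii) then follow from the universal identity $-2 Q_{\mathbb{Z}}^1(\xi) = [\xi, \xi]$ recalled in Section \ref{section Ek}, combined with (i) and (vii); this determines each $Q_{\mathbb{Z}}^1$ up to the kernel of multiplication by $2$ in the target $H_1$. The main technical obstacle is precisely this residual $2$-torsion ambiguity: in $H_1(\Gamma_{2,1}) = \mathbb{Z}/10$ there is a $\mathbb{Z}/2$-worth of solutions (shifted by $5\sigma\cdot\tau$), and in $H_1(\Gamma_{2,1}^{1/2}[0])$ the ambiguity lives in the $\mathbb{Z}/2\{B\}$ summand. To pin these down I would reduce modulo $2$ and use a Dyer--Lashof style characterization of $Q^1 \!\pmod 2$ together with the map of $E_2$-algebras $\Rq \to \R$ (which sends $\sigma_\epsilon \mapsto \sigma$ and hence $Q_{\mathbb{Z}}^1(\sigma_\epsilon) \mapsto Q_{\mathbb{Z}}^1(\sigma)$); naturality combined with parts (iii)--(vi) and the known compatibility with \cite{E2} then propagates the fully-determined value $3 \sigma \cdot \tau$ from the MCG case to the spin versions, fixing the $B$-coefficients of $Q_{\mathbb{Z}}^1(\sigma_0)$ and $Q_{\mathbb{Z}}^1(\sigma_1)$.
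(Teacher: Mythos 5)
Your overall strategy for parts (iii)--(vi) and (viii) matches the paper, but there are two genuine problems.

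First, in parts (iv), (v), (vi) your claim that the inclusion simply ``sends these to the same words in the letters $a_1, b_1$'' glosses over an essential conjugation issue. Stabilization by $\sigma_1$ sends $q_{0,0}$ to $q_{0,0,1,1}$, but the fixed model for $\Gamma_{2,1}^{1/2}[1]$ in Theorem \ref{thm: 6.2} is $\Stab_{\Gamma_{2,1}}(q_{1,0,1,1})$; likewise $z \cdot \sigma_0$ lands in the stabilizer of $q_{1,1,0,0}$ rather than $q_{1,0,1,1}$, and $z\cdot\sigma_1$ in the stabilizer of $q_{1,1,1,1}$ rather than $q_{0,0,0,0}$. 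The unconjugated word is therefore not even an element of the subgroup that GAP is abelianizing. The paper fixes this by choosing an explicit conjugator moving one refinement to the other (for example $b_1$ for $x\cdot\sigma_1$ and $y\cdot\sigma_1$, and $b_1a_1ea_2$ for $z\cdot\sigma_0$) and then noting that the resulting map on group homology is independent of the choice of conjugator since any two choices differ by an inner automorphism. Without that step the computation is ill-posed.

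Second, your geometric description of the Browder bracket in part (vii) is incorrect. You identify $\beta_{1,1}^2$ with $T_c = (T_{a_1}T_{b_1})^6$, the twist about the curve bounding just the first handle. That is the curve the paper calls $u$; but as \cite[Lemma 3.6, Figure 4]{E2} shows, $-[\sigma,\sigma]$ is the commutator-type word $t_w t_u^{-1} t_v^{-1}$ involving all three separating curves: $u$ and $v$ bounding the two glued genus-one pieces and $w$ bounding their union. Indeed $t_u = (a_1b_1)^6$ abelianizes to $12\tau = 2\tau$ in $H_1(\Gamma_{2,1};\mathbb{Z}) \cong \mathbb{Z}/10$, which contradicts part (i), whereas $t_w t_u^{-1} t_v^{-1}$ abelianizes to $-24\tau = -4\tau$ as it must. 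If you feed $(a_1b_1)^6$ into the GAP abelianizations you will get the wrong coefficients in (vii), and this error would then corrupt (viii) as well. The correct word, after which the paper's argument that it fixes every quadratic refinement (since $u,v,w$ are disjoint from all $\alpha_i,\beta_i$) goes through, is $(b_2 a_2 e a_1 b_1)^6 (a_1 b_1)^{-6} (a_2 b_2)^{-6}$; one still has to conjugate appropriately to lie in the chosen stabilizer for $[\sigma_1,\sigma_1]$ and $[\sigma_0,\sigma_1]$.

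For (i)--(ii) and the resolution of the $2$-torsion ambiguity in (viii), your route differs slightly from the paper's: the paper simply quotes $[\sigma,\sigma]$ and $Q_{\mathbb{Z}}^1(\sigma)$ from \cite[Lemma 3.6]{E2} and uses the induced map on $H_1$ from Theorem \ref{thm: 6.2}(iv),(v), while you propose reproving (i)--(ii) and invoking a Dyer--Lashof-style mod-$2$ characterization; the naturality argument via $\Rq\to\R$ you describe is the right mechanism, and the extra Dyer--Lashof machinery is unnecessary once one has the value $3\sigma\cdot\tau$ in hand.
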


\begin{proof}
Parts (i) and (ii) and appear in \cite[Lemma 3.6]{E2}. 

For part (iii) we use the same GAP computation as in Theorem \ref{thm: 6.2}, (ii). 
Right stabilization by $\sigma_0$ sends $q_{0,0} \mapsto q_{0,0,0,0}$ and $a \mapsto a_1$, $b \mapsto b_1$. 
Therefore, $x \cdot \sigma_0$ is represented by $a_1^{-2}$ and $y \cdot \sigma_0$ is represented by $a_1 b_1 a_1^{-1}$

\begin{verbatim}
gap> Image(q,s.1);
f1*f9^-2*f10^4
gap> Image(q,s.6);
f6*f9^-3*f10^3
\end{verbatim}

This says that under abelianization $a_1^{-2}$ (which is $s.1$) is mapped to $f1*f9^{-2}*f10^4$, which is $4A$ by the GAP computation in the proof of Theorem \ref{thm: 6.2}, (ii). 
Also, since $s.6$ means $a_1 b_1 a_1^{-1}$ then we get $y \cdot \sigma_0= 3A+B$. 

Proof of (iv):

Observe that when we stabilize by $- \cdot \sigma_1$ we send $q_{0,0} \mapsto q_{0,0,1,1}$, whereas our choice of quadratic refinement of Arf invariant $1$ is $q_{0,1,1,1}$. 
To fix this we will use conjugation: $b_1 \in \Gamma_{2,1}$ satisfies $b_1^*(q_{1,0,1,1})=q_{0,0,1,1}$ and so 
$$\Stab_{\Gamma_{2,1}}(q_{0,0,1,1}) \xrightarrow{b_1 \cdot - \cdot b_1^{-1}} \Stab_{\Gamma_{2,1}}(q_{1,0,1,1})$$
is an isomorphism. 

This is non-canonical, but its action in group homology is canonical: 
If $u \in \Gamma_{2,1}$ satisfies $u^*(q_{1,0,1,1})=q_{0,0,1,1}$ then the maps $u \cdot - \cdot u^{-1}$ and $b_1 \cdot - \cdot b_1^{-1}$ differ by conjugation by $b_1 u^{-1} \in \Stab_{\Gamma_{2,1}}(q_{1,0,1,1})$, which acts trivially in group homology.
Thus, for homology computations, $x \cdot \sigma_1$ is represented by $b_1 a_1^{-2} b_1^{-1} \in \Gamma_{2,1}^{1/2}[1]$ and $y \cdot \sigma_1$ is represented by $b_1 a_1 b_1 a_1^{-1} b_1^{-1}$. 

Now we use GAP computation in the proof of Theorem \ref{thm: 6.2},(iii) to see where these elements map
\begin{verbatim}
gap> Image(iso,G.3*G.1^-2*G.3^-1);
F1^-1*F3*F1 
% Expression for b1*a1^-1*b1^-1 \in S in terms of its generators. 
gap>  Image(q,Image(iso,G.3*G.1^-2*G.3^-1))=Image(q,s.1)^28;
true 
% This shows that indeed b1*a1^-2*b1^-1 is the element 28
in the abelianization. 
gap> Image(iso,G.3*G.1*G.3*G.1^-1*G.3^-1);
F1
gap> Image(q, Image(iso,G.3*G.1*G.3*G.1^-1*G.3^-1))=Image(q,s.1);
true
% This shows that b_1 a_1 b_1 a_1^{-1} b_1^{-1} is the element 1. 
\end{verbatim}

Part (v) is similar to the previous part:
right stabilization by $\sigma_0$ sends $q_{1,1} \mapsto q_{1,1,0,0}$, and so we need to conjugate by $b_1 a_1 e a_2 \cdot - \cdot (b_1 a_1 e a_2)^{-1}$ to go to $\Gamma_{2,1}^{1/2}[0]$. 
Also, $z$ is represented by $a$, so $z \cdot \sigma_0$ is represented by $b_1 a_1 e a_2 a_1 (b_1 a_1 e a_2)^{-1}$.

Using GAP: 
\begin{verbatim}
gap> Image(iso,(G.3*G.1*G.5*G.2)*G.1*(G.3*G.1*G.5*G.2)^-1);
F3^-1*F5*F3
gap> Image(q, Image(iso,(G.3*G.1*G.5*G.2)*G.1*(G.3*G.1*G.5*G.2)^-1))
=Image(q,s.1);
true
\end{verbatim}

Part (vi) follows from the following GAP computation: 
\begin{verbatim}
gap> Image(iso,(G.1*G.2*G.5)*G.1*(G.1*G.2*G.5)^-1);
F9
gap> Image(q,Image(iso,(G.1*G.2*G.5)*G.1*(G.1*G.2*G.5)^-1))
=Image(q,s.9);
true
\end{verbatim}

To prove part (vii) we will need the following claim
\begin{claim}
The element $-[\sigma,\sigma] \in \Gamma_{2,1}$ is represented by $(b_2 a_2 e a_1 b_1)^6 (a_1 b_1)^6 (a_2 b_2)^{-6}$
\end{claim}

\begin{proof}
By \cite[Lemma 3.6, Figure 4]{E2} we can write $-[\sigma,\sigma]$ as $t_w t_u^{-1} t_v^{-1}$, where $u,v,w$ are the following curves called ``a'', ``b'' and ``c'' respectively in \cite{E2}, 
and $t_w, t_u, t_v$ are the corresponding right-handed Dehn twists along them. 
Now we use \cite[Lemma 21 (iii)]{wajnryb} to write each of $t_u,t_v,t_w$ in terms of the generators, yielding the following:
$t_u=(a_1 b_1)^6$, $t_v=(a_2 b_2)^6$ and $t_w=(b_2 a_2 e a_1 b_1)^6$. 
\end{proof}

The above element lies in $\Stab_{\Gamma_{2,1}}(q)$ for any quadratic refinement $q$ because each of the curves $u,v,w$ is disjoint from the curves $\alpha_1,\beta_1,\alpha_2,\beta_2$, and hence $t_u,t_v,t_w$ do not affect the value of $q$ along the standard generators of $H_1(\Sigma_{2,1};\mathbb{Z})$.
Thus, $[\sigma_i,\sigma_j] \in H_1(B\Gamma_{2,1}^{1/2}[i+j (\mod 2)];\mathbb{Z})$ is represented by 
$$(b_2 a_2 e a_1 b_1)^6 (a_1 b_1)^6 (a_2 b_2)^{-6} \in \Stab_{\Gamma_{2,1}}(q_{i,i,j,j}),$$
and then we conjugate this element so that it lies in our fixed choices of stabilizers. 

For $-[\sigma_0,\sigma_0]$ we don't need to conjugate, so we find 
\begin{verbatim}
gap> Image(q, Image(iso,(G.4*G.2*G.5*G.1*G.3)^6*(G.1*G.3)^-6
*(G.2*G.4)^-6))=AbS.10^8;
true
% this means that abelianization is (8,0) \in Z \oplus Z/2.
\end{verbatim}

For $-[\sigma_1,\sigma_1]$ need to conjugate by $a_1 a_2 e$, and we find
\begin{verbatim}
gap>  Image(q,Image(iso,G.1*G.2*G.5*(G.4*G.2*G.5*G.1*G.3)^6*
(G.1*G.3)^-6*(G.2*G.4)^-6*(G.1*G.2*G.5)^-1))=AbS.10^-72;
true
\end{verbatim}

For $-[\sigma_0,\sigma_1]$ we need to conjugate by $b_1$, and we get
\begin{verbatim}
gap>  Image(q,Image(iso,G.3*(G.4*G.2*G.5*G.1*G.3)^6*
(G.1*G.3)^-6*(G.2*G.4)^-6*G.3^-1))=AbS.5^56;
true 
\end{verbatim}

Finally, to prove (viii) we use that $2 Q_{\mathbb{Z}}^1(\sigma_{\epsilon}) = -[\sigma_{\epsilon},\sigma_{\epsilon}]$, which follows from the discussion in \cite[Page 9]{E2}. 

For $\epsilon=0$, Theorem \ref{thm: 6.2} together with part (vii) of this theorem say that $Q_{\mathbb{Z}}^1(\sigma_0)$ is either $4A$ or $4A+B$. 
But, by part (ii) of this theorem we know that it must map to $3 \sigma \cdot \tau \in H_1(\Gamma_{2,1};\mathbb{Z})$. 
Using Theorem \ref{thm: 6.2},(iv) we get that the answer must be $4A+B$. 
The computation of the case $\epsilon=1$ is similar. 
\end{proof}

\subsubsection{$g=3$}

\begin{theorem}\label{thm: 6.4}
\begin{enumerate}[(i)]
    \item $H_1(\Gamma_{3,1}^{1/2}[0];\mathbb{Z}) \cong \mathbb{Z}/4$, where $A \cdot \sigma_0 = y \cdot \sigma_0^2$ is a generator and $B \cdot \sigma_0= 2 A \cdot \sigma_0$.  
    \item $Q_{\mathbb{Z}}^1(\sigma_0) \cdot \sigma_0= 2 y \cdot \sigma_0^2$. 
    \item $H_1(\Gamma_{3,1}^{1/2}[1];\mathbb{Z}) \cong \mathbb{Z}/4$, where $ y \cdot \sigma_0 \cdot \sigma_1= z \cdot \sigma_0^2$ is a generator.  
    \item $Q_{\mathbb{Z}}^1(\sigma_0) \cdot \sigma_1= 2 y \cdot \sigma_0 \cdot \sigma_1=Q_{\mathbb{Z}}^1(\sigma_1) \cdot \sigma_0$ and $Q_{\mathbb{Z}}^1(\sigma_0) \cdot \sigma_0=Q_{\mathbb{Z}}^1(\sigma_1) \cdot \sigma_1$.
\end{enumerate}
\end{theorem}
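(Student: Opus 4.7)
The plan is to follow the GAP-based strategy of the proofs of Theorems~\ref{thm: 6.1}, \ref{thm: 6.2} and \ref{thm: 6.3}, now applied in genus~$3$. First I would feed GAP a Wajnryb-style finite presentation of $\Gamma_{3,1}$ on the Dehn twist generators $a_1,b_1,a_2,b_2,a_3,b_3$ together with the two chain-twists $e_{12},e_{23}$ (with the usual braid, commutation and lantern/chain relations). Next, I would implement the right action of each generator as a permutation on the $2^6=64$ quadratic refinements of $(H_1(\Sigma_{3,1};\mathbb{Z}),\cdot)$, and split the result into the orbit of Arf invariant $0$ (of size $36$) and that of Arf invariant $1$ (of size $28$).

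For parts (i) and (iii), choose refinements $q_\delta$ ($\delta\in\{0,1\}$) of Arf invariant $\delta$ that arise as stabilizations of the genus-$1$ and genus-$2$ choices fixed in Theorems~\ref{thm: 6.1} and \ref{thm: 6.2}, so that the cycles $A\cdot\sigma_\delta$, $B\cdot\sigma_\delta$, $y\cdot\sigma_0^{2-\delta}\sigma_1^\delta$ land naturally in the chosen stabilizer. Form $\Stab_{\Gamma_{3,1}}(q_\delta)$ as a GAP preimage of a point stabilizer in $S_{36}$ or $S_{28}$, apply \texttt{IsomorphismFpGroupByGenerators} to extract a finite presentation, and compute \texttt{AbelianInvariants}; the expected output is $[4]$ in both cases, showing that each $H_1$ is $\mathbb{Z}/4$. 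To identify the named classes, write each of $y\cdot\sigma_0^{2-\delta}\sigma_1^\delta$, $A\cdot\sigma_\delta$, $B\cdot\sigma_\delta$ as an explicit word in the Dehn twist generators (using the formulas for $y,A,B$ from Theorems~\ref{thm: 6.1} and \ref{thm: 6.2}, extended by the identity on the extra handles) and compare their images under \texttt{MaximalAbelianQuotient}. As in Theorem~\ref{thm: 6.3}~(iv)--(vi), one sometimes must conjugate by a suitable $u\in\Gamma_{3,1}$ to carry the natural product refinement onto our fixed $q_\delta$; this is harmless on homology because two such $u$'s differ by a stabilizer element, which acts trivially by conjugation on group homology.

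For parts (ii) and (iv) no further presentation work is required. From Theorem~\ref{thm: 6.3}~(viii) we already know $Q^1_{\mathbb{Z}}(\sigma_0)=4A+B$ and $Q^1_{\mathbb{Z}}(\sigma_1)=-36A+B$ inside $H_1(\Gamma_{2,1}^{1/2}[0])$. Multiplying each on the right by $\sigma_0$ or $\sigma_1$, using $B\sigma_\delta=2A\sigma_\delta$, and substituting $A\sigma_\delta=y\sigma_0^{2-\delta}\sigma_1^\delta$ from parts (i) and (iii), each claimed equality reduces to arithmetic in $\mathbb{Z}/4$: for instance $(4A+B)\sigma_0=2A\sigma_0=2y\sigma_0^2$, giving part~(ii), while $-36A+B\equiv -34A\equiv 2A\pmod{4}$ produces the analogue for $Q^1_{\mathbb{Z}}(\sigma_1)\cdot\sigma_\delta$.

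The main obstacle is bookkeeping. Wajnryb's presentation in genus~$3$ carries noticeably more relations than the genus-$2$ case, and encoding the permutation action on $64$ refinements together with the auxiliary conjugations is tedious and error-prone. Once the GAP script is set up correctly, however, the theorem falls out by exactly the same mechanical pattern as the earlier appendix results.
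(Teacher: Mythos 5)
Your overall scaffolding — find a presentation of $\Gamma_{3,1}$ via Wajnryb, pass to the spin subgroups, abelianize in GAP, and then do elementary arithmetic for the structural identities — matches the paper's appendix. But both the subgroup-finding step and the argument for parts (iii) and (iv) diverge from what the paper actually does, and the divergence is deliberate on the paper's side.

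For the subgroup, you propose to implement the permutation action of the generators on all $2^6=64$ quadratic refinements and take the preimage of a point stabilizer in $S_{36}$ (resp.\ $S_{28}$), as was done in genus $1$ and $2$. The paper explicitly abandons that approach in genus $3$: it instead \emph{guesses} a list of words lying in $\Gamma_{3,1}^{1/2}[0]$ (resp.\ $[1]$), uses \texttt{Index} to verify that the subgroup they generate has index $36$ (resp.\ $28$), and then invokes the orbit count to conclude equality. Your route is correct in principle, but it requires encoding a much larger permutation representation and coset enumeration, which is precisely the bookkeeping burden the paper's shortcut is designed to sidestep.

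For parts (iii) and (iv) the gap is more substantive. You propose to read off $A\cdot\sigma_1$ and $B\cdot\sigma_1$ directly from a GAP abelianization and then substitute into $Q^1_{\mathbb{Z}}(\sigma_0)=4A+B$ and $Q^1_{\mathbb{Z}}(\sigma_1)=-36A+B$. Note that the identity $A\cdot\sigma_1=y\cdot\sigma_0\cdot\sigma_1$ and the relation $B\cdot\sigma_1=2A\cdot\sigma_1$ are \emph{not} established anywhere before Theorem~\ref{thm: 6.4}, so they would themselves need GAP verification (with the attendant conjugation bookkeeping to move the stabilized word into the chosen $\Stab_{\Gamma_{3,1}}(q_1)$). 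The paper avoids all of that: its GAP code for $\Gamma_{3,1}^{1/2}[1]$ produces only the abelian invariants, and parts (iii) and (iv) are then deduced indirectly. It uses Theorem~A(i) to know $\sigma_\epsilon\cdot-$ is surjective on $H_1$ for $g\geq 4$, compares with the known stable value $H_1(\Gamma_{\infty,1}^{1/2}[\delta];\mathbb{Z})\cong\mathbb{Z}/4$ from \cite{Picardspin,rspin} to conclude that $H_1$ is already stable at $g=3$, and then exploits $\sigma_0^2=\sigma_1^2$ together with injectivity of $\sigma_1\cdot-$ to transfer the genus-$4$ identity $Q^1_{\mathbb{Z}}(\sigma_0)\cdot\sigma_0^2=2y\sigma_0^3$ back down to genus $3$. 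It is worth spelling out this alternative if you pursue your route, both because the stability argument sidesteps the unproved relations you lean on and because it is how the paper actually justifies the choice of generator $y\sigma_0\sigma_1$ in part (iii). Finally, a small slip: Wajnryb's genus-$3$ presentation used by the paper has seven generators $a_1,a_2,a_3,b_1,b_2,e_1,e_2$; there is no $b_3$.
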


\begin{proof}
By \cite[Theorem 1]{wajnryb} we get a presentation of $\Gamma_{3,1}$ with generators
$a_1,a_2,a_3,b_1,b_2,b_3,e_1,e_2$, where the $a_i,b_i$ are defined as in the cases $g=1,2$, $e_1$ is what was called $e$ in the $g=2$ case using the first two handles, and $e_2$ is defined analogously, but using the second and third handles instead.

To prove (i) and (ii) we fix our quadratic refinement of Arf invariant $0$ to be the one evaluating to $0$ on all the $\alpha_i$'s and $\beta_i$'s. 
In this case the strategy to find a presentation for $\Gamma_{3,1}^{1/2}[0]$ is different: instead of computing the action on quadratic refinements we will write down elements of $\Gamma_{3,1}^{1/2}[0]$ (inspired by expressions from previous computations) and check that the subgroup they generate has index $36$ inside $\Gamma_{3,1}$, and hence that it must agree with $\Gamma_{3,1}^{1/2}[0]$. 

\begin{verbatim}
gap> F:=FreeGroup("a1","a2","a3","b1","b2","e1","e2");
gap> AssignGeneratorVariables(F);
gap> rel:=[a1*b1*a1*b1^-1*a1^-1*b1^-1, a2*b2*a2*b2^-1*a2^-1*b2^-1,
a1*e1*a1*e1^-1*a1^-1*e1^-1, a2*e1*a2*e1^-1*a2^-1*e1^-1,
a2*e2*a2*e2^-1*a2^-1*e2^-1, a3*e2*a3*e2^-1*a3^-1*e2^-1,
a1*a2*a1^-1*a2^-1, a1*a3*a1^-1*a3^-1, a3*a2*a3^-1*a2^-1,
b1*b2*b1^-1*b2^-1, a1*b2*a1^-1*b2^-1, a2*b1*a2^-1*b1^-1,
a3*b2*a3^-1*b2^-1, a3*b1*a3^-1*b1^-1, b1*e1*b1^-1*e1^-1,
b2*e1*b2^-1*e1^-1, b1*e2*b1^-1*e2^-1, b2*e2*b2^-1*e2^-1,
a1*e2*a1^-1*e2^-1, a3*e1*a3^-1*e1^-1,  e1*e2*e1^-1*e2^-1,
(b1*a1*e1*a2)^-5*b2*a2*e1*a1*b1^2*a1*e1*a2*b2, ((b2*a2*e1*b1^-1)*
(e2*a2*a3*e2)* (a2*e1*a1*b1)^-1*b2*(a2*e1*a1*b1) *(e2*a2*a3*e2)^-1*
(b1*e1^-1*a2^-1*b2^-1)*a1*a2*a3)^-1*(a2*e1*a1*b1)^-1*b2*
(a2*e1*a1*b1)* (e2*a2*a3*e2)* (a2*e1*a1*b1)^-1*b2*(a2*e1*a1*b1)
*(e2*a2*a3*e2)^-1*(e1*a1*a2*e1)*(e2*a2*a3*e2)*(a2*e1*a1*b1)^-1*
b2*(a2*e1*a1*b1) *(e2*a2*a3*e2)^-1*(e1*a1*a2*e1)^-1];
% this encodes the presentation. 
gap> G:=F/rel;
% this is \Gamma_{3,1}
gap> H:=Subgroup(G,[G.1^-2,G.2^-2,G.3^-2,G.4^-2,G.5^-2,G.6^-2,
G.7^-2,G.4*G.5*G.6^-1,G.5*G.2*G.5^-1,G.4*G.1*G.4^-1,G.2*G.7*G.2^-1,
G.7*G.3*G.7^-1]);
% H lies inside Stab_{\Gamma_{3,1}}(q_{000000}) as each
generator of H fixes q_{000000}.
gap> Index(G,H);
36 
% Thus, H=\Gamma_{3,1}^{1/2}[0] as there are 36 elements 
in Q(3) of Arf invariant 0.
gap> AbelianInvariants(H);
[ 4 ]
% This gives H_1(\Gamma_{3,1}^{1/2}[0]) \cong Z/4. 
gap> genH:=GeneratorsOfGroup(H);
gap> iso:=IsomorphismFpGroupByGenerators(H,genH);
gap> S:=ImagesSource(iso); 
% f.p. group isomorphic to H. 
gap> q:=MaximalAbelianQuotient(S);
gap> AbS:=ImagesSource(q);
gap> Order(Image(q,Image(iso,G.1*G.4*G.1^-1)));
4 % Thus, y*\sigma_0^2 is a generator of H_1(\Gamma_{3,1}{1/2}[0])
gap> Order( Image(q,Image(iso,(G.1*G.4*G.1)^2*G.6*G.5^-1*G.4^-1)));
2 
% Thus B* \sigma_0 =2 mod 4 by definition of B.
\end{verbatim}

Now, to finish we need to check two things:
The first one is that $A \cdot \sigma_0= y \cdot \sigma_0^2$ is a generator: 
By Theorem \ref{thm: 6.3}(ii) we have $y \cdot \sigma_0= 3A+B$ so using the above GAP computations we find $A \cdot \sigma_0= y \cdot \sigma_0^2$. 

By Theorem \ref{thm: 6.3} (viii) we have
$Q_{\mathbb{Z}}^1(\sigma_0) \cdot \sigma_0= 4 A \cdot \sigma_0+ B \cdot \sigma_0= B \cdot \sigma_0$, as required. 

To prove parts (iii) and (iv) we fix our quadratic refinement of Arf invariant 1 to be the one with value 1 in all the $a_i$ and $b_i$.
Now we use GAP (F,G are as before, so we will not copy that part again) and a similar idea as above to get the result. 
\begin{verbatim}
gap> H:=Subgroup(G,[G.1,G.2,G.3,G.4,G.5,G.6^-2,G.7^-2,G.6*G.4*G.6^-1,
G.6*G.5*G.6^-1,G.7*G.5*G.7^-1,(G.6*G.2*G.1)*G.3*(G.6*G.2*G.1)^-1,
(G.6*G.2*G.1)*G.7*(G.6*G.2*G.1)^-1, (G.6*G.2*G.1)*(G.6*G.5*G.4)*
(G.6*G.2*G.1)^-1,(G.7*G.3*G.2)*G.1*( G.7*G.3*G.2)^-1,
( G.7*G.3*G.2)*G.6*( G.7*G.3*G.2)^-1]);
% H lies inside Stab_{\Gamma_{3,1}}(q_{111111}) as each
generator of H fixes q_{111111}.
gap> Index(G,H);
28 %Thus, H=\Gamma_{3,1}{1/2}[1] as there are 28 elements in 
Q(3) of Arf invariant 1. 
gap> AbelianInvariants(H);
[ 4 ]
% this gives H_1(\Gamma_{3,1}{1/2}[1])=Z/4. 
\end{verbatim}

This shows that $H_1(\Gamma_{3,1}^{1/2}[1];\mathbb{Z}) \cong \mathbb{Z}/4$. 

By Theorem \hyperref[theorem A]{A} (i), the map $\sigma_{\epsilon} \cdot - : H_1(\Gamma_{g-1,1}^{1/2}[\delta-\epsilon];\mathbb{Z}) \rightarrow H_1(\Gamma_{g,1}^{1/2}[\delta];\mathbb{Z})$ is surjective for $g \geq 4$ and any $\epsilon, \delta$. 
(The proof of Theorem \hyperref[theorem A]{A} uses the results of the Appendix, but part (i) is shown independently of these first homology computations.)

Moreover, the stable values $H_1(\Gamma_{\infty,1}^{1/2}[\delta];\mathbb{Z})$ are both isomorphic to $\mathbb{Z}/4$ by \cite[Theorem 1.4]{Picardspin} plus \cite[Theorem 2.14]{rspin}.
Thus the groups $H_1(\Gamma_{g,1}^{1/2}[\delta];\mathbb{Z})$ are stable for any $g \geq 3$ and any $\delta \in \{0,1\}$. 

Since $\sigma_0^2=\sigma_1^2$ then $(Q_{\mathbb{Z}}^1(\sigma_0) \cdot \sigma_1) \cdot \sigma_1 = Q_{\mathbb{Z}}^1(\sigma_0) \cdot \sigma_0^2= 2 y \cdot \sigma_0^3 = (2 y \sigma_0 \cdot \sigma_1) \cdot \sigma_1$, and by the above  stability result $Q_{\mathbb{Z}}^1(\sigma_0) \cdot \sigma_1= 2 y \cdot \sigma_0 \cdot \sigma_1$. 

Also, $ y \cdot \sigma_0^3 = (y  \cdot \sigma_0 \cdot \sigma_1) \cdot \sigma_1$ is a generator of $H_1(\Gamma_{4,1}^{1/2}[0];\mathbb{Z})$ by the stability  plus part (i) of this theorem. 
Thus, $y \cdot \sigma_0 \cdot \sigma_1$ is a generator of $H_1(\Gamma_{3,1}^{1/2}[1];\mathbb{Z})$ by applying stability.
Using that $z \cdot \sigma_0= y \cdot \sigma_1$ (by Theorem \ref{thm: 6.3}) we find $z \cdot \sigma_0^2= y \cdot \sigma_1 \cdot \sigma_0$. 

Finally, by Theorem \ref{thm: 6.3}, $Q_{\mathbb{Z}}^1(\sigma_1)-Q_{\mathbb{Z}}^1(\sigma_0)=-40 A$, so any stabilization of this vanishes because it lives in a $4$-torsion group. 
\end{proof}

\subsection{Quadratic symplectic groups over $\mathbb{Z}$} \label{appendix symplectic}

The proofs of this section will be very similar to the ones of Section \ref{appendix mcg}, but using the explicit presentations of $Sp_{2g}(\mathbb{Z})$ given in \cite{presentationsymplectic}. 
The computation in Theorems \ref{thm: 6.6}, \ref{thm: 6.7} and \ref{thm: 6.8} about the first homology of the quadratic symplectic groups of Arf invariant $1$ is used in \cite[Section 4.1]{framings}.

\begin{rem}
In \cite{presentationsymplectic} they write matrices using a different basis.   
We will change the matrices given \cite{presentationsymplectic} to our choice of basis of Section \ref{section symplectic} without further notice in all the following computations. 

\end{rem}

\subsubsection{$g=1$} \label{11.2.1}

\begin{theorem}\label{thm: 6.6}
\begin{enumerate}[(i)]
    \item $H_1(Sp_2(\mathbb{Z});\mathbb{Z})=\mathbb{Z}/12\{t\}$, where $t$ is represented by $\begin{psmallmatrix} 1 & 1 \\ 0 & 1 \end{psmallmatrix} \in Sp_2(\mathbb{Z})$. 
    \item $H_1(Sp_2^0(\mathbb{Z});\mathbb{Z})=\mathbb{Z}\{\mu\} \oplus \mathbb{Z}/4\{\lambda\}$, where $\mu$ is represented by $\begin{psmallmatrix} 1 & 2 \\ 0 & 1 \end{psmallmatrix} \in Sp_2^0(\mathbb{Z})$ and $\lambda$ is represented by $\begin{psmallmatrix} 0 & -1 \\ 1 & 0 \end{psmallmatrix} \in Sp_2^0(\mathbb{Z})$. 
    \item $H_1(Sp_2^1(\mathbb{Z});\mathbb{Z})=\mathbb{Z}/12\{t'\}$, where $t'$ is represented by $\begin{psmallmatrix} 1 & 1 \\ 0 & 1 \end{psmallmatrix} \in Sp_2^1(\mathbb{Z})$.
\end{enumerate}
\end{theorem}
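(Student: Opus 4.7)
The plan is to mirror the approach of Theorem \ref{thm: 6.1}: start from an explicit presentation of the ambient group, compute the action of its generators on quadratic refinements as permutations, pass to the point-stabilizer, and abelianize using GAP. A useful preliminary observation is that the set $Q(1)$ of quadratic refinements of the standard symplectic form on $\mathbb{Z}^2$ has $2^2 = 4$ elements, uniquely determined by their values on the standard basis $(e,f)$; by Definition \ref{defn arf} only $q_{1,1}$ has Arf invariant $1$, so its $Sp_2(\mathbb{Z})$-orbit is a singleton and $Sp_2^1(\mathbb{Z}) = \Stab_{Sp_2(\mathbb{Z})}(q_{1,1}) = Sp_2(\mathbb{Z})$. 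Thus parts (i) and (iii) assert the same thing.

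For part (i), I would take a standard presentation of $Sp_2(\mathbb{Z}) = SL_2(\mathbb{Z})$ from \cite{presentationsymplectic} (after the change of basis indicated in the Remark), with generators $T = \begin{psmallmatrix}1 & 1 \\ 0 & 1\end{psmallmatrix}$ and $S = \begin{psmallmatrix}0 & -1 \\ 1 & 0\end{psmallmatrix}$ satisfying $S^4 = 1$ and $(ST)^3 = S^2$, and abelianize. The second relation becomes $3t = -s$ and combined with $4s = 0$ forces $12t = 0$, so the abelianization is cyclic of order $12$ generated by $t = [T]$, yielding $H_1(Sp_2(\mathbb{Z});\mathbb{Z}) = \mathbb{Z}/12\{t\}$.

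For part (ii), I would compute the right action of $S$ and $T$ on the three Arf-zero refinements $\{q_{0,0}, q_{0,1}, q_{1,0}\}$ directly from $Q(\phi)(q) = q \circ \phi$, obtaining two explicit transpositions in $S_3$, and then follow the GAP procedure used in Theorem \ref{thm: 6.1}(ii): form the preimage in the presented group of $\Stab_{S_3}(q_{0,0})$, apply \texttt{IsomorphismFpGroupByGenerators} to extract a presentation of $Sp_2^0(\mathbb{Z})$, and use \texttt{AbelianInvariants} together with \texttt{MaximalAbelianQuotient} to compute the abelianization and track the images of proposed generators. The matrix $\begin{psmallmatrix}1 & 2 \\ 0 & 1\end{psmallmatrix} = T^2$ is visibly in $Sp_2^0(\mathbb{Z})$ from $q_{0,0}(2e+f) = q_{0,0}(f) = 0$, and $S$ is too since it permutes $\pm e, \pm f$ on which $q_{0,0}$ vanishes; the GAP output should then confirm that their images are respectively a free $\mathbb{Z}$-generator $\mu$ and a $4$-torsion generator $\lambda$ of the abelianization $\mathbb{Z} \oplus \mathbb{Z}/4$.

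The main obstacle is bookkeeping rather than mathematics: faithfully translating the matrix generators of \cite{presentationsymplectic} into the basis used here, getting the permutation representation on $Q(1)$ exactly right, and verifying that the elements returned by \texttt{GeneratorsOfGroup} applied to the stabilizer subgroup are carried under abelianization to the specified representatives $\mu$ and $\lambda$. Once this set-up is correct, the remainder is a mechanical GAP computation.
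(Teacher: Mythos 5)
Your proposal matches the paper's approach in all essential respects: identify $Sp_2^1(\mathbb{Z})=Sp_2(\mathbb{Z})$ via uniqueness of the Arf-$1$ refinement (so (i) and (iii) coincide), abelianize a presentation of $Sp_2(\mathbb{Z})$ for part (i), and for part (ii) compute the permutation action of the generators on the three Arf-$0$ refinements and feed the index-$3$ point-stabilizer into GAP via \texttt{IsomorphismFpGroupByGenerators} and \texttt{MaximalAbelianQuotient}. The one cosmetic deviation is the presentation used: you write $\langle S,T\mid S^4=1,\ (ST)^3=S^2\rangle$, whereas the paper works with the generators $L=\begin{psmallmatrix}1&1\\0&1\end{psmallmatrix}$ and $N=\begin{psmallmatrix}0&1\\-1&1\end{psmallmatrix}$ with relations $(LN)^2=N^3$, $N^6=1$ taken from \cite{presentationsymplectic} (with $L$ acting as the transposition $(1\,2)$ and $N$ as the $3$-cycle $(1\,2\,3)$ on the Arf-$0$ refinements); both presentations abelianize to $\mathbb{Z}/12$ generated by the image of $\begin{psmallmatrix}1&1\\0&1\end{psmallmatrix}$, so this is a difference of bookkeeping rather than method.
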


\begin{proof}
By \cite[Theorem 1]{presentationsymplectic} we have 
$$Sp_2(\mathbb{Z})= \langle L,N | (LN)^2=N^3, N^6=1 \rangle$$
where
$L= \begin{psmallmatrix} 1 & 1 \\ 0 & 1 \end{psmallmatrix}$
and
$N= \begin{psmallmatrix} 0 & 1 \\ -1 & 1 \end{psmallmatrix}$. 

We will use the same notation as in Section \ref{appendix mcg} for the quadratic refinements, where now $\alpha,\beta$ are be the standard hyperbolic basis of $(\mathbb{Z}^2,\Omega_1)$. 
We let
$Sp_2^0(\mathbb{Z}):= \Stab_{Sp_2(\mathbb{Z})}(q_{0,0})$ and $Sp_2^1(\mathbb{Z}):= \Stab_{Sp_2(\mathbb{Z})}(q_{1,1})$.
We then compute the action of $L, N$ on the set of quadratic refinements of each invariant (see the GAP formulae below). 

Since there is a unique quadratic refinement of Arf invariant 1 then $Sp_2^1(\mathbb{Z})=Sp_2(\mathbb{Z})$ so parts (i) and (iii) are equivalent.  
Thus, it suffices to show parts (i) and (ii). 
To prove (i) we abelianize the presentation of $Sp_2(\mathbb{Z})$ to get $\mathbb{Z}/12\{L\}$. 
To prove (ii) we use GAP
\begin{verbatim}
gap> F:=FreeGroup("L","N");
gap> AssignGeneratorVariables(F);
gap> rel:=[(L*N)^2*N^-3, N^6];
gap> G:=F/rel;
gap> Q:=Group((1,2),(1,2,3));
gap> hom:=GroupHomomorphismByImages
(G,Q,GeneratorsOfGroup(G),GeneratorsOfGroup(Q));
[ L, N ] -> [ (1,2), (1,2,3) ]
gap> S:=PreImage(hom,Stabilizer(Q,1));
% This is Sp_2^0(Z)
gap> AbelianInvariants(S);
[ 0, 4 ]
gap> genS:=GeneratorsOfGroup(S);
[ L^-2, N*L^-1 ]
gap> iso:=IsomorphismFpGroupByGenerators(S,genS);
gap> s:=ImagesSource(iso);
gap> q:=MaximalAbelianQuotient(s);
[ F1, F2 ]  -> [ f2, f_1^-1*f2 ]
gap> AbS:=ImagesSource(q);
gap> GeneratorsOfGroup(AbS);
[ f1, f2 ]
gap> RelatorsOfFpGroup(AbS);
[ f1^-1*f2^-1*f1*f2, f1^4 ]
\end{verbatim}

From these we get that $L^2$ is a generator of the $\mathbb{Z}$ summand. 
Moreover, $N L^{-1} L^2= N L$ maps to a generator of the $\mathbb{Z}/4$ summand, and this matrix is precisely the conjugation by $\Omega_1$ of our choice of matrix for $\lambda$. 
\end{proof}

\subsubsection{$g=2$}

\begin{theorem}\label{thm: 6.7}
\begin{enumerate}[(i)]
    \item $H_1(Sp_4(\mathbb{Z});\mathbb{Z})=\mathbb{Z}/2\{t \cdot \sigma\}$, where $\sigma, t$ are as in Theorem \ref{thm: 6.6}.
    \item $H_1(Sp_4^0(\mathbb{Z});\mathbb{Z})=\mathbb{Z}/2\{Q_{\mathbb{Z}}^1(\sigma_0)\} \oplus \mathbb{Z}/4\{\lambda \cdot \sigma_0\}$, and $Q_{\mathbb{Z}}^1(\sigma_0)$ is represented by $\begin{psmallmatrix} 0 & 0 & 1 & 0 \\ 0 & 0 & 0 & 1 \\ 1 & 0 & 0 & 0 \\ 0 & 1 & 0 & 0 \end{psmallmatrix} \in Sp_4^0(\mathbb{Z})$. 
    
    Moreover, $\mu \cdot \sigma_0=0$. 
    
    \item $H_1(Sp_4^1(\mathbb{Z});\mathbb{Z})=\mathbb{Z}/4\{t' \cdot \sigma_0\}$, where $t'$ is as in Theorem \ref{thm: 6.6}.
    
    \item $t' \cdot \sigma_1= \lambda \cdot \sigma_0$, $\mu \cdot \sigma_1= 0$, $\lambda \cdot \sigma_1= t' \cdot \sigma_0$, $Q_{\mathbb{Z}}^1(\sigma_0)=Q_{\mathbb{Z}}^1(\sigma_1)$ and $[\sigma_0,\sigma_1]=0$.
\end{enumerate}
\end{theorem}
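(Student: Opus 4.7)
The plan is to follow the template established in Theorems \ref{thm: 6.2} and \ref{thm: 6.3}: I will use the finite presentation of $Sp_4(\mathbb{Z})$ given in \cite{presentationsymplectic} (translated to our basis conventions for $\Omega_2$), compute the right action of the generators on the finite set $Q(2)$, and then run GAP to obtain presentations and abelianizations of the relevant stabilizer subgroups.

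For part (i) I would abelianize the presentation of $Sp_4(\mathbb{Z})$ directly; the result $\mathbb{Z}/2$ is the classical computation, and $t\cdot\sigma$ is the stabilization of the generator of $H_1(Sp_2(\mathbb{Z});\mathbb{Z})$ from Theorem \ref{thm: 6.6}. For parts (ii) and (iii) I index $Q(2)$ exactly as in the proof of Theorem \ref{thm: 6.2} (ten refinements of Arf invariant $0$, six of Arf invariant $1$), compute the induced permutation representations of the symplectic generators, form \texttt{GroupHomomorphismByImages} from $Sp_4(\mathbb{Z})$ onto the appropriate symmetric group, take the preimage of the point stabilizer of the fixed $q_{0,0,0,0}$ (respectively $q_{1,0,1,1}$), and use \texttt{IsomorphismFpGroupByGenerators} followed by \texttt{MaximalAbelianQuotient} to read off $H_1$. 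Identifying the explicit generators $Q_{\mathbb{Z}}^1(\sigma_0)$, $\lambda\cdot\sigma_0$ and $t'\cdot\sigma_0$ proceeds as in Theorem \ref{thm: 6.6}(ii) by locating their matrix representatives inside the finitely presented subgroup and tracking their images under the abelianization map. The vanishing $\mu\cdot\sigma_0=0$ is an immediate GAP check once $\mu$ is written as an explicit block-diagonal $4\times 4$ matrix and then expressed as a word in the generators of the stabilizer.

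For part (iv) the delicate point is that right-stabilization by $\sigma_1$ sends $q_{1,1}$ to $q_{0,0,1,1}$ rather than to our chosen representative $q_{1,0,1,1}$. Exactly as in the proof of Theorem \ref{thm: 6.3}(iv) I would fix an element $u\in Sp_4(\mathbb{Z})$ with $u^{*}(q_{1,0,1,1})=q_{0,0,1,1}$ and replace each stabilized word $w\cdot\sigma_1$ by its conjugate $uwu^{-1}\in Sp_4^{1}(\mathbb{Z})$; the particular choice of $u$ is immaterial on abelianization, since any two such choices differ by an element of $\Stab_{Sp_4(\mathbb{Z})}(q_{1,0,1,1})$. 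GAP then verifies the equalities $t'\cdot\sigma_1=\lambda\cdot\sigma_0$, $\mu\cdot\sigma_1=0$ and $\lambda\cdot\sigma_1=t'\cdot\sigma_0$ by comparing images in the $\mathbb{Z}/2\oplus\mathbb{Z}/4$ (resp. $\mathbb{Z}/4$) abelianization. The identity $Q_{\mathbb{Z}}^1(\sigma_0)=Q_{\mathbb{Z}}^1(\sigma_1)$ I would deduce from the relation $-2Q_{\mathbb{Z}}^1(-)=[-,-]$ together with the computation of $[\sigma_0,\sigma_1]$: as in the claim inside the proof of Theorem \ref{thm: 6.3}(vii), the Browder bracket has a canonical representative as a product of right-handed Dehn twists along boundary-parallel curves, whose image under the symplectic representation produces an explicit word that I would check in GAP to vanish on abelianization, yielding simultaneously $[\sigma_0,\sigma_1]=0$ and, after using $\sigma_0^2=\sigma_1^2$ and the $2$-torsion structure of the bracket, the desired equality $Q_{\mathbb{Z}}^1(\sigma_0)=Q_{\mathbb{Z}}^1(\sigma_1)$.

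The step I expect to be the main obstacle is producing the explicit symplectic word representing the Browder bracket $[\sigma_0,\sigma_1]$. The mapping class group version used Wajnryb's formulas for Dehn twists along boundary-parallel curves; here I must either translate those geometric expressions through the symplectic representation $\Gamma_{2,1}\twoheadrightarrow Sp_4(\mathbb{Z})$, or reconstruct them intrinsically inside $Sp_4(\mathbb{Z})$ using its generators. Once this word is in hand, all remaining verifications in part (iv) reduce to routine GAP bookkeeping analogous to the computations already carried out for the spin mapping class groups.
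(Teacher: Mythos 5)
Your plan for parts (i)--(iii) matches the paper's actual proof: abelianize the \cite{presentationsymplectic} presentation for (i), then use the permutation action on the ten (resp.\ six) quadratic refinements of each Arf invariant, take preimages of point stabilizers, and run \texttt{IsomorphismFpGroupByGenerators} and \texttt{MaximalAbelianQuotient} in GAP, exactly as for the spin case.

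Part (iv) is where you diverge. The paper does \emph{not} repeat the GAP bookkeeping inside $Sp_4(\mathbb{Z})$, nor does it construct an explicit symplectic word for $[\sigma_0,\sigma_1]$. Instead it uses the $E_2$-algebra map $\Rq_{\MCG}\to\Rq_{\Sp}$ induced by the functor $\MCG\to\Sp$ (i.e.\ by $\Gamma_{g,1}\twoheadrightarrow Sp_{2g}(\mathbb{Z})$): since the Dehn twists $a,b\in\Gamma_{1,1}$ map to explicit $2\times 2$ matrices, one reads off $x\mapsto-\mu$, $y\mapsto\lambda-\mu$, $z\mapsto t'$, and then \emph{all} of the identities in part (iv) drop out by pushing forward the corresponding identities of Theorem~\ref{thm: 6.3} (items (iii)--(viii)). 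This is both shorter and avoids the boundary-twist word altogether. Your approach is feasible in principle but considerably heavier, since it requires translating the Wajnryb-style boundary-curve words through the symplectic representation and redoing the conjugation adjustments from scratch.

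There is also a genuine gap in your proposed derivation of $Q_{\mathbb{Z}}^1(\sigma_0)=Q_{\mathbb{Z}}^1(\sigma_1)$. You invoke $-2Q_{\mathbb{Z}}^1(-)=[-,-]$ together with knowledge of $[\sigma_0,\sigma_1]$ and $\sigma_0^2=\sigma_1^2$. But in $H_1(Sp_4^0(\mathbb{Z});\mathbb{Z})\cong\mathbb{Z}/2\oplus\mathbb{Z}/4$, multiplication by $-2$ kills all of the $2$-torsion, and $Q_{\mathbb{Z}}^1(\sigma_0)$, $Q_{\mathbb{Z}}^1(\sigma_1)$ are themselves $2$-torsion (both equal to the $\mathbb{Z}/2$ generator), so $-2Q_{\mathbb{Z}}^1(\sigma_\epsilon)=0$ carries no information about which $2$-torsion element $Q_{\mathbb{Z}}^1(\sigma_\epsilon)$ is, and the equality cannot be recovered this way. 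You would need either to compute $Q_{\mathbb{Z}}^1(\sigma_0)$ and $Q_{\mathbb{Z}}^1(\sigma_1)$ directly by explicit representative matrices (as the paper does for $Q_{\mathbb{Z}}^1(\sigma_0)=N^3$) with the appropriate conjugations, or to adopt the paper's push-forward strategy: Theorem~\ref{thm: 6.3}(viii) gives $Q_{\mathbb{Z}}^1(\sigma_1)-Q_{\mathbb{Z}}^1(\sigma_0)=-40A=-10\,x\cdot\sigma_0$, which maps to $10\mu\cdot\sigma_0=0$ under the $E_2$-map, using $\mu\cdot\sigma_0=0$ from part (ii).
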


\begin{proof}
By \cite[Theorem 2]{presentationsymplectic} $Sp_4(\mathbb{Z})$ has a presentation with two generators $L, N$ (see the GAP computations below for the relations), where $L$ is given by the stabilization of the matrix called $L$ in Section \ref{11.2.1}. 

To prove (i) we compute
\begin{verbatim}
gap> F:=FreeGroup("L","N");
gap> AssignGeneratorVariables(F);
gap> rel:=[N^6, (L * N)^5, (L *N^-1)^10, (L* N^-1* L * N)^6, 
L *(N^2*L*N^4)* L^-1 * (N^2*L*N^4)^-1, L *(N^3*L*N^3)* L^-1 *
(N^3*L*N^3)^-1, L *(L*N^-1)^5* L^-1 * (L*N^-1)^-5];
gap> G:=F/rel;
% This is Sp_4(Z)
gap> p:=MaximalAbelianQuotient(G);
[ L, N ] -> [ f1, f1 ]
gap> AbG:=ImagesSource(p);
<pc group of size 2 with 2 generators>
% This says H_1(Sp_4(Z)) is a group with 2 elements.
gap> Order(Image(p,G.1));
2
%This gives the required generator: L
\end{verbatim}

To prove part (ii) we add more GAP computations to the above, using a permutation representation of how $L,N$ act on the 10 quadratic refinements of Arf invariant 0 (we use same indexing as in the proof of Theorem \ref{thm: 6.2}, and action is computed similarly). 
\begin{verbatim}
gap> Q:=Group((1,2)(4,6)(5,8),(2,3,4,5,6,7)(8,9,10));
gap> hom:=GroupHomomorphismByImages
(G,Q,GeneratorsOfGroup(G),GeneratorsOfGroup(Q));
% Permutation representation of G on the 10 quadratic refinements
of Arf invariant 0. 
gap> S:=PreImage(hom,Stabilizer(Q,1));
% This is the group Sp_4^0(Z)
gap> genS:=GeneratorsOfGroup(S);
[ L^-2, N, L*N*L*N^-1*L^-1, L*N^-1*L*N*L^-1 ]
gap> iso:= IsomorphismFpGroupByGenerators(S,genS);
gap> s:=ImagesSource(iso);
gap> q:=MaximalAbelianQuotient(s);
gap> AbS:=ImagesSource(q);
gap> AbelianInvariants(S);
[ 2, 4 ]
gap> Order(Image(q,s.1));
1
% By definition of \mu it is represented by L^2, 
so its stabilization vanishes.
gap> Order(Image(q,s.2));
2
gap> Order(Image(q,s.4));
4
gap> Image(q,s.2)=Image(q,s.4)^2;
false
% These last computations say that L*N^-1*L*N*L^-1 generates
the Z/4 summand, and that N generates the Z/2 summand. 
\end{verbatim}

By \cite[Theorem 2]{presentationsymplectic}, he matrix $N$ is given by 
N=$\begin{psmallmatrix} 0 & 1 & -1 & 0 \\ -1 & 0 & 0 & 0 \\ -1 & 0 & 0 & 1 \\ 0 & 0 & -1 & 0 \end{psmallmatrix}$.

Thus,
$N^3= \begin{psmallmatrix} 0 & 0 & 1 & 0 \\ 0 & 0 & 0 & 1 \\ 1 & 0 & 0 & 0 \\ 0 & 1 & 0 & 0 \end{psmallmatrix} \in Sp_4^0(\mathbb{Z})$ represents $Q_{\mathbb{Z}}^1(\sigma_0)$ because it represents $Q_{\mathbb{Z}}^1(\sigma)$ and it stabilizes the quadratic refinement $q_{0,0,0,0}$, so this generates the $\mathbb{Z}/2$ summand. 

Also $L N^{-1} L N L= \begin{psmallmatrix} 0 & 1 & 0 & 0 \\ -1 & 0 & 0 & 0 \\ 0 & 0 & 1 & 0 \\ 0 & 0 & 0 & 1 \end{psmallmatrix}$, which by the last paragraph of the proof of Theorem \ref{thm: 6.6} is the stabilization of the matrix $\lambda$ conjugated by $\Omega_1$. 
Since $\Omega_1 \in Sp_{4}^0(\mathbb{Z})$ then $L N^{-1} L N L$ represents the homology class $\lambda \cdot \sigma_0$. 
By the GAP computations $L N^{-1} L N L= L N^{-1} L N L^{-1} L^2$ is a generator of the $\mathbb{Z}/4$ summand, as required.  

To prove part (iii) we also use the same GAP program but this time we compute the permutation representation on the quadratic refinements of Arf invariant 1. 
We will pick our quadratic refinement of Arf invariant 1 to be $q_{1,1,0,0}$.
\begin{verbatim}
gap> T:=Group((3,4),(1,2,3,4,5,6));
gap> homtwo:=GroupHomomorphismByImages
(G,T,GeneratorsOfGroup(G),GeneratorsOfGroup(T));
% Permutation representation of G on the 6 quadratic refinements
of Arf invariant 1 indexed so that q_{1,1,0,0}=1. 
gap> SS:=PreImage(homtwo,Stabilizer(T,1));
% This is Sp_4^1(Z)
gap> genSS:=GeneratorsOfGroup(SS);
[ L, N*L*N^-1, N^-1*L*N, N^2*L^-2*N^-2, N^3*L^-1*N^-2 ]
gap> isotwo:=IsomorphismFpGroupByGenerators(SS,genSS);
gap> ss:=ImagesSource(isotwo);
gap> qq:=MaximalAbelianQuotient(ss);
gap> AbSS:=ImagesSource(qq);
<pc group of size 4 with 5 generators>
% This says that H_1(Sp_4^1(Z)) is a group of order 4
gap> Order(Image(qq,ss.1));
4 
% This shows the group is cyclic and gives the claimed generator.
\end{verbatim}

To prove (iv) we use the $E_2$-algebra map from the $E_2$-algebra of spin mapping class groups to the one of quadratic symplectic groups, which is induced by the obvious functor $\MCG \rightarrow \Sp$ and the fact that the quadratic refinements functor $Q$ is essentially the same in both cases. 
In more concrete terms, the functor just sends the spin mapping class groups to their actions on first homology, which are quadratic symplectic groups. 

The Dehn twist $a \in \Gamma_{1,1}$ maps to the matrix $\begin{psmallmatrix} 1 & 1 \\ 0 & 1 \end{psmallmatrix} \in Sp_2(\mathbb{Z})$, and the Dehn twist $b \in \Gamma_{1,1}$ maps to $\begin{psmallmatrix} 1 & 0 \\ -1 & 1 \end{psmallmatrix} \in Sp_2(\mathbb{Z})$.
Thus, $a^{-2} \mapsto \begin{psmallmatrix} 1 & -2 \\ 0 & 1 \end{psmallmatrix}= \begin{psmallmatrix} 1 & 2 \\ 0 & 1 \end{psmallmatrix}^{-1}$ and $a b a^{-1} \mapsto \begin{psmallmatrix} 0 & 1 \\ -1 & 2 \end{psmallmatrix}= \begin{psmallmatrix} 0 & 1 \\ -1 & 0 \end{psmallmatrix} \cdot \begin{psmallmatrix} 1 & 2 \\ 0 & 1 \end{psmallmatrix}^{-1}$
By Theorems \ref{thm: 6.1}, \ref{thm: 6.2} and \ref{thm: 6.3} we get $x \mapsto -\mu$, $y \mapsto  \lambda - \mu$ and $z \mapsto t'$.
Also by definition $\sigma_{\epsilon} \mapsto \sigma_{\epsilon}$ for $\epsilon \in \{0,1\}$. 
Thus, by Theorem \ref{thm: 6.3} we get: $x \cdot \sigma_1= 28 z \cdot \sigma_0$ and so $- \mu \cdot \sigma_1= 28 t' \cdot \sigma_0 = 0$. 
Also, $y \cdot \sigma_1= z \cdot \sigma_0$ so $(\lambda-\mu) \cdot \sigma_1= t' \cdot \sigma_0$, giving the result. 
Furthermore, $z \cdot \sigma_1= y \cdot \sigma_0$ so $t' \cdot \sigma_1= (\lambda- \mu) \cdot \sigma_0$, hence giving the result. 
Finally, $Q_{\mathbb{Z}}^1(\sigma_1)=Q_{\mathbb{Z}}^1(\sigma_0)-10 x \cdot \sigma_0$, so $Q_{\mathbb{Z}}^1(\sigma_1)=Q_{\mathbb{Z}}^1(\sigma_0)+ 10 \mu \cdot \sigma_0= Q_{\mathbb{Z}}^1(\sigma_0)$, and $[\sigma_0,\sigma_1]=24 z \cdot \sigma_0 \mapsto 0$. 
\end{proof}

\subsubsection{$g=3$}

\begin{theorem}\label{thm: 6.8}
\begin{enumerate}[(i)]
\item $H_1(Sp_{6}^0(\mathbb{Z};\mathbb{Z}) = \mathbb{Z}/4\{\lambda \cdot \sigma_0^2\}$. 
\item $Q_{\mathbb{Z}}^1(\sigma_0) \cdot \sigma_0 = 2 \lambda \cdot \sigma_0^2$. 
\item $Q_{\mathbb{Z}}^1(\sigma_0) \cdot \sigma_0= Q_{\mathbb{Z}}^1(\sigma_1) \cdot \sigma_1$ and $Q_{\mathbb{Z}}^1(\sigma_0) \cdot \sigma_1= Q_{\mathbb{Z}}^1(\sigma_1) \cdot \sigma_0 = 2 \lambda \cdot \sigma_0 \cdot \sigma_1$. 
\item $H_1(Sp_{6}^1(\mathbb{Z});\mathbb{Z}) = \mathbb{Z}/4\{\lambda \cdot \sigma_0 \cdot \sigma_1\}$ and $\lambda \cdot \sigma_0 \cdot \sigma_1= t' \cdot \sigma_0^2$. 
\end{enumerate}
\end{theorem}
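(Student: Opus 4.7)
The plan is to parallel the approach of Theorems \ref{thm: 6.4} and \ref{thm: 6.7}: direct GAP computation of $H_1(Sp_6^\delta(\mathbb{Z}))$ via explicit permutation representations on the sets of quadratic refinements, combined with transfer of identities from the spin mapping class group setting along the $E_2$-algebra map induced by the action of diffeomorphisms on first homology.

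For parts (i), (ii), and the group-theoretic part of (iv), I would take a presentation of $Sp_6(\mathbb{Z})$ from \cite{presentationsymplectic} and build in GAP the permutation representations on the $36$ quadratic refinements of Arf invariant $0$ and the $28$ of Arf invariant $1$. The groups $Sp_6^\delta(\mathbb{Z})$ are obtained as preimages of the point stabilizers, and abelianizing the resulting finite presentations yields $\mathbb{Z}/4$ in each case. The named generators are identified by tracking the image under the abelianization map of the block-diagonal stabilizations of the matrix $\lambda$ from Theorem \ref{thm: 6.6}, which will give $\lambda \cdot \sigma_0^2$ generating $H_1(Sp_6^0)$ and $\lambda \cdot \sigma_0 \cdot \sigma_1$ generating $H_1(Sp_6^1)$. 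For part (ii) I would then feed the block-diagonal stabilization of the permutation matrix $N^3 \in Sp_4^0(\mathbb{Z})$, which represents $Q_{\mathbb{Z}}^1(\sigma_0)$ by Theorem \ref{thm: 6.7}, into the GAP abelianization and verify that its image equals $2 \lambda \cdot \sigma_0^2$.

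For the identity $\lambda \cdot \sigma_0 \cdot \sigma_1 = t' \cdot \sigma_0^2$ in part (iv), I would multiply both sides of the relation $t' \cdot \sigma_1 = \lambda \cdot \sigma_0$ from Theorem \ref{thm: 6.7}(iv) by $\sigma_1$ and apply $\sigma_1^2 = \sigma_0^2$, which is the ring relation in $H_{*,0}(\overline{\Rq}) = \mathbb{Z}[\sigma_0,\sigma_1]/(\sigma_1^2 - \sigma_0^2)$. Part (iii) is then handled by transferring the corresponding identities of Theorem \ref{thm: 6.4}(iv) along the $E_2$-algebra map $\mathsf{MCG^q} \to \mathsf{Sp^q}$ used in Theorem \ref{thm: 6.7}(iv): under this map $\sigma_\epsilon \mapsto \sigma_\epsilon$, $Q_{\mathbb{Z}}^1(\sigma_\epsilon) \mapsto Q_{\mathbb{Z}}^1(\sigma_\epsilon)$ by naturality of the Browder operation, and $y \mapsto \lambda - \mu$. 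Since $\mu \cdot \sigma_\epsilon = 0$ in $H_1(Sp_4^\epsilon)$ by Theorem \ref{thm: 6.7}(ii), the image of $2 y \cdot \sigma_0 \cdot \sigma_1$ is exactly $2 \lambda \cdot \sigma_0 \cdot \sigma_1$, so the three identities in (iii) follow at once from Theorem \ref{thm: 6.4}(iv).

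The principal obstacle is the GAP computation itself: the presentation of $Sp_6(\mathbb{Z})$ is considerably larger than in the rank $2$ case, and building the permutation action on the quadratic refinements requires careful bookkeeping of each generator's action on a chosen basis of $\mathbb{Z}^6$. As a consistency check I would verify that the orbit of the chosen refinement has the expected size, so that $Sp_6^\delta(\mathbb{Z})$ emerges with the right index in $Sp_6(\mathbb{Z})$. A secondary subtlety is confirming that the block-diagonal stabilization of a chosen matrix represents the claimed class in the $E_2$-algebra sense; this is automatic once one recalls that $\sigma_\epsilon$ is represented by a point in $\Rq((1,\epsilon))$ and the product in $\overline{\Rq}$ is induced by the block-diagonal monoidal structure on $\mathsf{Sp^q}$.
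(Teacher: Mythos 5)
Your proposal is correct in all its mathematical steps, but for parts (i), (ii), and the group-structure claim of (iv) it takes a genuinely different route from the paper. You propose a direct GAP computation with a presentation of $Sp_6(\mathbb{Z})$ and explicit permutation actions on the $36$ and $28$ quadratic refinements, mirroring what the paper does for Theorems \ref{thm: 6.2}, \ref{thm: 6.4}, \ref{thm: 6.6} and \ref{thm: 6.7}. The paper, by contrast, \emph{avoids} any new GAP computation in genus $3$ for the symplectic case: it uses the surjection $\Gamma_{3,1}^{1/2}[\delta] \twoheadrightarrow Sp_6^{\delta}(\mathbb{Z})$ (which comes for free from surjectivity of $\Gamma_{3,1}\to Sp_6(\mathbb{Z})$) to get an upper bound $\mathbb{Z}/4 = H_1(\Gamma_{3,1}^{1/2}[\delta]) \twoheadrightarrow H_1(Sp_6^{\delta}(\mathbb{Z}))$, then cites Johnson--Millson for the fact that the Arf-$0$ group actually has first homology $\mathbb{Z}/4$, and uses Theorem \hyperref[theorem B]{B}(i) plus stability to handle the Arf-$1$ case. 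The paper's route is thus much lighter computationally but depends on an external reference and, for (iv), on already having the stability theorem in hand; yours is heavier but self-contained. Your handling of (iii) via the $E_2$-algebra map $\mathsf{MCG^q}\to\mathsf{Sp^q}$ and the observation that $\mu\cdot\sigma_{\epsilon}=0$ matches the paper exactly, and your derivation of $\lambda\cdot\sigma_0\cdot\sigma_1 = t'\cdot\sigma_0^2$ by multiplying $t'\cdot\sigma_1 = \lambda\cdot\sigma_0$ by $\sigma_1$ and using $\sigma_1^2=\sigma_0^2$ is the intended elementary argument. Two small cautions: first, you attribute $Q_{\mathbb{Z}}^1(\sigma_{\epsilon})\mapsto Q_{\mathbb{Z}}^1(\sigma_{\epsilon})$ to ``naturality of the Browder operation,'' but what is needed is naturality of the $Q^1$ operation itself under $E_2$-maps (the Browder bracket only determines $2Q^1$); second, as seen in the proof of Theorem \ref{thm: 6.7}(ii)--(iii), the block-diagonal stabilization of a representing matrix will generally fix the \emph{wrong} quadratic refinement, so to identify $\lambda\cdot\sigma_0^2$ and $\lambda\cdot\sigma_0\cdot\sigma_1$ in your GAP abelianizations you will need to conjugate into the chosen stabilizer, and you should argue (as the paper does) that the induced map on $H_1$ is conjugation-independent.
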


\begin{proof}

By Theorem \ref{thm: 6.4}(i) $H_1(\Gamma_{3,1}^{1/2}[0];\mathbb{Z})=\mathbb{Z}/4\{y \cdot \sigma_0^2\}$. 
The homomorphism $\Gamma_{3,1}^{1/2}[0] \rightarrow Sp_6^0(\mathbb{Z})$ is surjective because $\Gamma_{3,1} \rightarrow Sp_6(\mathbb{Z})$ is, and hence $\mathbb{Z}/4\{y \cdot \sigma_0^2\}$ surjects onto $H_1(Sp_6^0(\mathbb{Z});\mathbb{Z})$. 
Using the $E_2$-algebra map of the previous section $y \cdot \sigma_0^2 \mapsto \lambda \cdot \sigma_0^2$. 
This gives part (ii) by Theorem \ref{thm: 6.4}(ii). 
The rest of part (i) follows from Theorem 1.1 in \cite[Theorem 1.1]{JohnsonMillson}, which says that $H_1(Sp_6^0(\mathbb{Z});\mathbb{Z}) \cong \mathbb{Z}/4$. 

Part (iii) follows by using the $E_2$-algebra map again and Theorem \ref{thm: 6.4}. 

For part (iv) we use Theorem \hyperref[theorem B]{B}, Part (i), to get that all the stabilization maps $\sigma_{\epsilon} \cdot - : H_1(Sp_{2(g-1)}^{\delta-\epsilon}(\mathbb{Z});\mathbb{Z}) \rightarrow H_1(Sp_{2g}^{\delta}(\mathbb{Z});\mathbb{Z})$ are surjective for $g \geq 4$. 
(The proof of part (i) of Theorem \hyperref[theorem B]{B} is independent of the computations in the Appendix.)

By \cite[Theorem 1.1]{JohnsonMillson} the stable first homology group of the quadratic symplectic groups of Arf invariant 0 is $\mathbb{Z}/4$. 
The stable first homology group of the quadratic symplectic groups of Arf invariant 1 must be the same by homological stability using Theorem \ref{theorem stab 1}. 
Thus, $H_1(Sp_{6}^{1}(\mathbb{Z});\mathbb{Z})$ surjects onto $\mathbb{Z}/4$. 
Finally, by a similar reasoning to the one at the beginning of this proof we get that $H_1(\Gamma_{3,1}^{1/2}[1];\mathbb{Z}) \cong \mathbb{Z}/4$ surjects onto $H_1(Sp_{6}^{1}(\mathbb{Z});\mathbb{Z})$.
The expression for the generator follows from Theorem \ref{thm: 6.4} and the $E_2$-algebra map. 
\end{proof}

\begin{rem}
All the computations of Section \ref{appendix symplectic} are consistent with the ones of \cite[Appendix A]{krannichmcg}. 
\end{rem}

\bibliographystyle{amsalpha}
\bibliography{bibliography}

\end{document}